\documentclass[a4paper]{amsart}
\input{article.sty}

\title[CH manifolds of very negative curvature]{Some functional properties on Cartan-Hadamard manifolds of very negative curvature}
\date{\today}

\author{Ludovico Marini*} 
\address[L. Marini]{Dipartimento di Matematica e Applicazioni,
Università degli Studi di Milano-Bicocca, Via R. Cozzi 55, I-20125, Milano}
\email[L. Marini - corresponding author]{l.marini9@campus.unimib.it}

\author{Giona Veronelli}
\address[G. Veronelli]{Dipartimento di Matematica ed Applicazioni, Università degli Studi di Milano-Bicocca, Via R. Cozzi 55, I-20125, Milano}
\email[G. Veronelli]{giona.veronelli@unimib.it}

\begin{document}
\begin{abstract}
In this paper we consider Cartan-Hadamard manifolds (i.e. simply connected of non-positive sectional curvature) whose negative Ricci curvature grows polynomially at infinity. 
We show that a number of functional properties, which typically hold when the curvature is bounded, remain true in this setting. 
These include the characterization of Sobolev spaces on manifolds, the so-called Cald\'eron-Zygmund inequalities and the $L^p$-positivity preserving property, i.e. $u\in L^p\ \&\ (-\Delta + 1)u\ge 0 \Rightarrow u\ge 0$. 
The main tool is a new class of first and second order Hardy-type inequalities on Cartan-Hadamard manifolds with a polynomial upper bound on the curvature.

In the last part of the manuscript we prove the $L^p$-positivity preserving property, $p\in[1,+\infty]$, on manifolds with subquadratic negative part of the Ricci curvature. This generalizes an idea of B. G\"uneysu and gives a new proof of a well-known condition for the stochastic completeness due to P. Hsu.
 \end{abstract}
\maketitle

\section{Introduction}
\label{sec:introduction}

A major task for geometric analysts consists in determining under which assumptions, and to what extent, certain properties typical of the Euclidean space have their counterparts on a given complete, non-compact Riemannian manifold. 
The properties one is interested in include for instance certain functional inequalities, the behavior of solutions of PDEs, the characterization of some functional spaces, spectral properties, and so on. 
A common set of assumptions which ensure that the manifold $M$ at hand is in a sense ``similar'' to the the Euclidean space (locally, but uniformly) is a constant lower bound on the Ricci curvature, or $|\Ric| \in L^\infty$ together with a positive lower bound on the injectivity radius. 
In this spirit, we consider the following problems.

\subsection*{A}
On a Riemannian manifold $(M, g)$ one disposes of several, a priori different, definitions for the Sobolev space of order $k \in \mathbb{N}$ and integrability class $p \in [1,+\infty]$. 
For instance, one can define $W^{k,p}(M)$ as the space of $L^p$-functions whose covariant (distributional) derivatives are in $L^p$ up to the order $k$: 
\begin{equation}
    \label{eq:W^{k,p}}
    W^{k, p}(M) \coloneqq \{ f \in L^p(M) : \nabla^j f \in L^p(M), \quad j = 0, \ldots k\}. 
\end{equation}
This turns out to be a Banach space once endowed with the usual norm
\begin{equation*}
    \Vert u \Vert_{W^{k, p}} \coloneqq \sum_{j=0}^k \Vert \nabla^j u \Vert_{L^p}. 
\end{equation*}
Thanks to a generalized Meyers-Serrin-type theorem, \cite{GGP2017}, if $p \in [1, + \infty)$ this space can be characterized as the closure of $W^{k, p}(M) \cap C^\infty(M)$ with respect to $\Vert \cdot\Vert_{W^{k, p}}$, which is quite useful in applications. 
Alternatively, one can define the space $W_0^{k,p}(M)$ as the closure of compactly supported smooth functions $C^\infty_0(M)$ with respect to the Sobolev norm $\Vert \cdot\Vert_{W^{k, p}}$, 
\begin{equation}
    \label{eq:W^{k,p}_0}
    W_0^{k,p} \coloneqq \overline{C^\infty_0(M)}^{\Vert \cdot\Vert_{W^{k, p}}}.
\end{equation}
Finally, for even orders one can consider $H^{2m, p}(M)$ as the space of $L^p$ functions whose iterations of the (distributional) Laplace-Beltrami operator are in $L^p$ up to order $m$, i.e., 
\begin{equation}
    H^{2m, p}(M) \coloneqq \{f \in L^p(M): \Delta^j f \in L^p(M), \quad j = 0, \ldots m\}, 
\end{equation}
endowed with the norm:
\begin{equation*}
     \Vert u \Vert_{H^{2m, p}} \coloneqq \sum_{j=0}^m \Vert \Delta^j u \Vert_{L^p}. 
\end{equation*}
In the Euclidean setting, $M = \R^n$, and on closed manifolds, the three spaces coincide. 
On an arbitrary Riemannian manifold one always has $W^{1,p}(M) = W^{1,p}_0(M)$, \cite{A1976}, whereas $k=2$ is the first non-trivial order where in general one can only conclude that
\begin{equation*}
    W^{2,p}_0(M) \subseteq W^{2, p}(M) \subseteq H^{2,p}(M). 
\end{equation*}
Nonetheless, if $|\Ric| \in L^\infty$ and the injectivity radius does not vanishes, it is actually possible to prove that $ W^{2,p}_0(M) = W^{2, p}(M) = H^{2,p}(M)$, see \cite{GP2015, He99}. 
See also \cite{V2020} for a detailed introduction to the problem. 
Both proofs rely on a computation in a harmonic coordinate system which, together with a covering argument, allows to reduce the Riemannian problem to the Euclidean setting. 
It is worth noticing that the result is also true for higher order $k$ if we require also that $|\nabla^j \Ric| \in L^\infty$ for $j = 0, \ldots, k-2$. 
In the Hilbert case ($p = 2$),  where a Bochner formula is available, a lower bound on Ricci curvature is actually enough, \cite{Ba14}. 

\subsection*{B}
The second problem we consider is the existence of $W^{2,p}$ regularity estimates for the solutions of the Poisson equation on a Riemannian manifold $(M, g)$; see \cite{P2020} for a nice recent survey on the topic. 
More specifically, we are interested in a-priori $L^p$-Hessian estimates of the form
\begin{equation}
\label{eq:CZintro}
\Vert \nabla^2\varphi \Vert_{L^p} \leq C \left[ \Vert \Delta \varphi \Vert_{L^p} + \Vert \varphi \Vert_{L^p} \right] \qquad \forall \varphi \in C^\infty_0(M)
\end{equation}
where $C>0$ is a positive constant.
Here $p \in (1, + \infty)$ and $\nabla^2 \varphi$ denotes the Hessian of $\varphi$, i.e., the second order covariant derivative. 
Such inequalities, known in literature as $L^p$-Calder\'on-Zygmund ($CZ(p)$) inequalities, were first established in a work by A. Calder\'on and A. Zygmund, \cite{CZ1952}, in the Euclidean setting, where in fact one has the stronger
\begin{equation*}
    \Vert \nabla^2 \varphi \Vert_{L^p} \leq C \Vert \Delta \varphi \Vert_{L^p} \qquad \forall \varphi \in C^\infty_0(\R^n). 
\end{equation*}
Note that the limit cases $CZ(1)$ and $CZ(+\infty)$ have been left out as they fail even in the Euclidean space, \cite{O1962,dLM1962}. 
It turn out that the validity of a $CZ(p)$ inequality on a Riemannian manifold implies the equality of the three Sobolev spaces defined in (A), for details we refer to \cite[Remark 2.1]{V2020} or \Cref{rmk:CZ(p) implies equality of sobolev} below.
As a matter of fact, one can ensure the validity of \eqref{eq:CZintro} under the same assumptions of $|\Ric|\in L^\infty$ and non-vanishing injectivity radius, \cite[Theorem C]{GP2015}. 
Furthermore, if $p = 2$ a lower bound on Ricci curvature is enough, \cite[Theorem B]{GP2015}. 

\subsection*{C}
Finally, we consider a positivity property for the solutions of $-\Delta u + u \geq 0$ on a complete Riemannian manifold. 
Note that in this paper $-\Delta$ has non-negative spectrum.
\begin{definition}
\label{def:Lp ppp}
A complete Riemannian manifold $(M, g)$ is said to be \textit{$L^p$-positivity preserving},  $p \in [1, +\infty]$, if the following implication holds true for every $u \in L^p(M)$
\begin{equation}
    \label{eq:Lp pp}
    (-\Delta + 1)u \geq 0 \text{ as a distribution }\Rightarrow u \geq 0.
\end{equation}
\end{definition}
Recall that $(-\Delta + 1)u \geq 0$ in the sense of distributions if the following inequality holds
\begin{equation*}
    \int_M u(-\Delta + 1)\phi dV_g \geq 0 \qquad \forall \phi \in C^\infty_0(M), \phi \geq 0.
\end{equation*}
This definition was introduced by B. G\"uneysu in \cite{Gu17}.
When $p = +\infty$, the $L^\infty$-positivity preserving property implies stochastic completeness while the $L^2$ case, yields the essential self-adjointness of the Schr\"odinger operator $-\Delta + V: C^\infty_0(M) \to L^2(M)$ for any non-negative $L^2_\loc$ potential $V$. 
This latter implication and the fact that $-\Delta + V$ is known to be essentially self-adjoint in $L^2(M)$ for $L^2_\loc$ non-negative potentials \cite{BMS02,GP2013}, lead M. Braverman, O. Milatovic and M. Shubin to propose the following conjecture, \cite[Conjecture P]{BMS02}.
\begin{conjecture}[BMS-conjecture]
If $(M, g)$ is geodesically complete, then $M$ is $L^2$-positivity preserving. 
\end{conjecture}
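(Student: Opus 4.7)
The plan is to follow the standard Kato-inequality plus Laplacian-cutoff scheme, since a fully curvature-free argument appears to require a fundamentally new idea. Given $u \in L^2(M)$ with $(-\Delta+1)u \geq 0$ distributionally, decompose $u = u_+ - u_-$ with $u_\pm \in L^2(M)$. By Kato's inequality one has, distributionally, $\Delta u_- \geq -\chi_{\{u<0\}}\Delta u$; combined with the hypothesis rewritten as $\Delta u \leq u$ on $\{u<0\}$, where $u = -u_-$, this yields
$$ (-\Delta + 1)u_- \leq 0 \qquad \text{in } \mathcal{D}'(M),$$
and hence for every non-negative $\phi \in C^\infty_0(M)$,
$$\int_M u_-\,(-\Delta+1)\phi \, dV_g \leq 0.$$
The task reduces to producing a sequence $\phi_n \nearrow 1$ along which one may take the limit of this inequality and conclude that $u_- \equiv 0$.

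The crucial ingredient is therefore a family of \emph{Laplacian cutoffs}: functions $\phi_n \in C^\infty_0(M)$ with $0 \leq \phi_n \leq 1$, $\phi_n \to 1$ pointwise, $(-\Delta+1)\phi_n \to 1$ pointwise, and $\|(-\Delta+1)\phi_n\|_{L^\infty(M)}$ uniformly bounded. Pairing the previous inequality against $u_-$ and handling the $L^2$-integrability via an auxiliary localisation (for instance by first testing against $(-\Delta+1)^{-1}(\phi_n g)$ for $g \in C^\infty_0(M)$, $g\ge 0$, using that the Friedrichs resolvent is a bounded positivity-preserving map on $L^2$) then yields $\langle u_-, g\rangle \leq 0$ for every such $g$, whence $u_- \equiv 0$.

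The main obstacle is the cutoff construction on an arbitrary geodesically complete manifold. The naive radial choice $\phi_n = \eta(r/n)$ with $r = d(\cdot,o)$ keeps $|\nabla \phi_n|_{\infty}$ small, but $\Delta \phi_n$ involves $\Delta r$, whose control via the Laplacian comparison theorem requires a lower bound on $\mathrm{Ric}$, which we are unwilling to assume. Under a subquadratic negative part of Ricci curvature one may instead start from a Khas'minskii-type exhaustion $\psi$ with $(-\Delta+1)\psi$ uniformly bounded, truncate and mollify it, and obtain the $\phi_n$; this is the route initiated by G\"uneysu and that the second part of the present paper appears to extend to the full range $p \in [1,+\infty]$. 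Settling the BMS conjecture in full generality would instead require either a curvature-free construction of Laplacian cutoffs on every geodesically complete manifold, or an alternative analytic strategy — for instance exploiting the classical essential self-adjointness of $-\Delta$ on $C^\infty_0 \subset L^2$ together with a direct resolvent argument — but bridging distributional positivity and operator-theoretic positivity without assuming extra regularity of $u$ remains the central unresolved difficulty.
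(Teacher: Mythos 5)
The statement you were asked to prove is labelled as a \emph{conjecture} in the paper, and the paper itself offers no proof of it: the authors state explicitly that the BMS conjecture remains open in full generality, and they only establish it for special classes of manifolds (Theorem I(c) under the double polynomial Ricci bound on Cartan--Hadamard manifolds, and Theorem \ref{thm:Lp pp subquadratic} under a subquadratic negative part of Ricci). Your proposal is honest about this -- you do not claim a complete proof, and your diagnosis of the obstruction (the lack of Laplacian cutoffs, or of any curvature-free substitute, on an arbitrary geodesically complete manifold) is exactly the difficulty the paper works around by \emph{assuming} curvature bounds. So there is nothing to ``grade'' as a proof; what can be compared is your sketch of the partial strategy against the paper's.

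There the routes genuinely differ. You propose a Kato-inequality argument: decompose $u=u_+-u_-$, deduce $(-\Delta+1)u_-\le 0$ distributionally, and kill $u_-$ by pairing against Laplacian cutoffs. The paper instead follows the BMS/G\"uneysu scheme: for $\phi\in C^\infty_0(M)$, $\phi\ge 0$, it constructs (Lemma \ref{lem:Lp pp smooth}) a positive smooth $v\in L^q$ with $(-\Delta+1)v=\phi$, pairs $u$ against $v\chi_R$, and controls the error terms $\int uv\Delta\chi_R$ and $\int u\langle\nabla\chi_R,\nabla v\rangle$ either via the Hardy-type inequalities (Theorem \ref{thm:L^p pp Cartan hadamard}) or via Bianchi--Setti Li--Yau gradient estimates (Theorem \ref{thm:Lp pp subquadratic}). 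The paper's route has the advantage that all the delicate analysis is performed on the \emph{smooth} auxiliary function $v$, whereas your route requires applying Kato's inequality to $u$ itself. That step is where your sketch has a genuine gap beyond the cutoff problem: the hypothesis only gives that $u-\Delta u$ is a positive distribution, hence a positive Radon measure, so $\Delta u$ need not lie in $L^1_{\loc}$, and the classical distributional Kato inequality $\Delta|u|\ge \operatorname{sgn}(u)\Delta u$ is not directly applicable without a (nontrivial) measure-valued refinement. This is precisely the ``bridging distributional positivity and operator-theoretic positivity'' issue you flag at the end, but it bites already at your first displayed inequality, not only at the limiting step. Your closing remark that settling the conjecture needs either curvature-free Laplacian cutoffs or a new analytic idea is a fair summary of the state of the problem as presented in the paper.
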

In the Euclidean case, the $L^2$-positivity preserving was proved by T. Kato using the fact that $-\Delta + 1 : \mathcal{S}'(\R^n) \to \mathcal{S}'(\R^n)$ induces an isomorphism on the space of tempered distributions whose inverse is positivity preserving, see \cite{K1972}. 
In the Riemannian setting, even though the BMS conjecture is still open in its full generality, one can prove that if $\Ric$ is bounded form below, then $M$ is $L^p$-positivity preserving on the whole scale $p \in [1, +\infty]$, \cite[Theorem XIV.31]{Gu17}.
For a complete introduction to the topic we refer to the survey \cite{Gu17BMS} as well as \cite[Section XIV.5]{Gu17}, \cite[Appendix B]{BMS02} or \cite{G2016}.

\vspace{\baselineskip}
\vspace{\baselineskip}

Some of the aforementioned results can be slightly improved by allowing a small explosion on the non-negative part of $\Ric$.
For instance, the equivalence of the Sobolev spaces 
${W^{2,p}(M)=W_0^{2,p}(M)}$, $p\in[1,\infty)$,  still holds if we allow $|\Ric| \leq br^2$ with a small decay of the injectivity radius, while to prove that ${W^{2,2}(M)=W_0^{2,2}(M)}$ and the $L^p$-positivity preserving property  $\Ric \geq -br^2$ is enough.
For reference on the first problem see \cite{IRV2019,IRV2020,HMRV20}, for the second one see \cite[p. 95, Section XIV.C]{Gu17} for $p\in [2,\infty)$ and \Cref{thm:Lp pp subquadratic} below for the whole range $p\in[1,+\infty]$. 
Nevertheless, the above results fail in general if we drop the curvature (and injectivity) assumptions and allow the bound on $|\Ric|$ to grow very fast at $\infty$.
Counterexamples with very unbounded curvature have been found in \cite{V2020,HMRV20} for (A) and in \cite{GP2015,Li,dePNZ,MV} for (B) while, to the best of our knowledge, the BMS conjecture remains open in its full generality. 
Note that the above counterexamples are characterized by an oscillatory behavior of the Ricci curvature which diverges in the negative and positive part in \cite{GP2015, Li, V2020} and in the positive part only in \cite{MV, HMRV20, dePNZ}. 

\vspace{\baselineskip}
\vspace{\baselineskip}

In this paper we show that several of the above properties still hold if one allows the curvature to become increasingly negative at infinity, possibly very fast, but in a controlled way. 
In particular, we consider a Cartan-Hadamard manifold $(M, g)$ (i.e. a simply-connected complete Riemannian manifold of non-positive sectional curvature) and assume that the Ricci curvature of $M$, $\Ric$, is controlled both from above and below polynomially at infinity. Namely,
\begin{equation}
    \label{bound intro}
    -b\, r^{\beta}(x) \leq \Ric(x) \leq - a\, r^\alpha(x), 
\end{equation}
outside a compact set, where $r(x)$ is the Riemannian distance of $x$ from a fixed reference point $o\in M$ and $a$ and $b$ are positive constants. 
Then, for suitable choices of the exponents $0\leq\alpha\leq\beta$ we are able to prove the following results. 

\begin{mytheorem}
\label{th_main}
Let $(M,g)$ be a Cartan-Hadamard manifold satisfying \eqref{bound intro} for some $a,b>0$. 
\begin{enumerate}[label=(\alph*)]
    \item If $\alpha \ge 0$ and $\beta=2\alpha +2$, then $W_0^{2,p}(M)=W^{2,p}(M)$ for every $p\in(1,+\infty)$.
    \item If $\alpha=\beta \ge 0$, then the $L^2$-Calder\'on-Zygmund inequality (i.e., \eqref{eq:CZintro} with $p=2$) holds on $M$.
    \item If $\alpha \ge 0$ and $\beta=\alpha +2$, then  $M$ is $L^p$-positivity preserving for all $p\in [2,+\infty)$. In particular, the BMS conjecture is satisfied for this class of manifolds. 
\end{enumerate}
\end{mytheorem}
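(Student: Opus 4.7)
All three parts rest on a common geometric input. By Hessian and Laplacian comparison, the two-sided bound \eqref{bound intro} forces $\Hess r \gtrsim r^{\alpha/2}(g-dr\otimes dr)$ and $\Delta r \lesssim r^{\beta/2}$ outside a compact set. Combining these with suitable integration by parts should yield the weighted Hardy-type inequalities promised in the abstract, schematically
\[
\int_M r^{p\alpha/2}|u|^p\, dV \lesssim \|u\|_{L^p}^p+\|\Delta u\|_{L^p}^p, \qquad \int_M r^{p\alpha/2}|\nabla u|^p\, dV \lesssim \|u\|_{L^p}^p+\|\nabla^2 u\|_{L^p}^p,
\]
for $u\in C_0^\infty(M)$. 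Establishing these with sharp-enough constants will be the main obstacle across all three parts, requiring careful manipulation of vector fields of the form $r^\gamma \nabla r$ together with the one-sided Hessian lower bound.

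For (a), by Meyers--Serrin \cite{GGP2017} it suffices to approximate smooth $u\in W^{2,p}(M)\cap C^\infty(M)$ by compactly supported functions. I would take $u_R:=\chi_R u$ with $\chi_R(x)=\psi(r(x)/R)$ for $\psi$ a standard bump. The delicate term in $\nabla^2 u_R$ is $u\,\nabla^2\chi_R$; since $\Hess r$ is positive semidefinite on a Cartan--Hadamard manifold, its operator norm is bounded by its trace $\Delta r$, giving $|\nabla^2\chi_R|\lesssim R^{-2}+R^{-1}r^{\beta/2}$. On the annulus $\{R\le r\le 2R\}$ this forces $\|u\,\nabla^2\chi_R\|_{L^p}^p \lesssim \int_{r\ge R} r^{p(\beta/2-1)}|u|^p\,dV$. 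With $\beta=2\alpha+2$ the exponent is exactly $p\alpha$, and the second Hardy inequality above together with dominated convergence closes the argument.

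For (b), I would start from the integrated Bochner identity $\int|\nabla^2 u|^2\,dV = \int(\Delta u)^2\,dV - \int\Ric(\nabla u,\nabla u)\,dV$ for $u\in C_0^\infty(M)$. The bound $\Ric\ge -br^\beta=-br^\alpha$ (since $\alpha=\beta$) produces the obstructive term $b\int r^\alpha|\nabla u|^2\,dV$, which the first Hardy-type inequality displayed above dominates by a small multiple of $\|\nabla^2 u\|_{L^2}^2$ plus $\|u\|_{L^2}^2$, hence can be absorbed into the left-hand side. The region where comparison fails is compact and handled by the classical bounded-curvature case, contributing a further $\|u\|_{L^2}^2$ term.

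For (c), I would follow B.~G\"uneysu's duality strategy. Given $u\in L^p$ with $(-\Delta+1)u\ge 0$, after a regularization one tests against smooth cutoffs $\eta_k$ of large balls, and integration by parts reduces the matter to controlling $\|\Delta \eta_k\|_\infty$ on $\supp\nabla\eta_k$ against $\|u\|_{L^p}$ through a H\"older pairing. The Laplacian bound yields $|\Delta \eta_k|\lesssim R_k^{\beta/2-1}= R_k^{\alpha/2}$ on the annulus, and the choice $\beta=\alpha+2$ is made precisely so that the weighted first-order Hardy inequality above, applied dually to $u_-$ along the scale $p\in[2,+\infty)$, absorbs this factor and sends the error to zero as $k\to\infty$; the BMS consequence then comes from restricting to $p=2$.
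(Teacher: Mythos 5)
Your overall architecture (comparison geometry $\Rightarrow$ Hardy-type inequalities $\Rightarrow$ cutoff arguments) is the paper's, but the Hardy inequalities you write down are too weak to run part (a), and this is not mere bookkeeping. With $\beta=2\alpha+2$ the bad term is $\|u\,\nabla^2\chi_R\|_{L^p}^p\lesssim\int_{r\ge R}r^{p(\beta/2-1)}|u|^p=\int_{r\ge R}r^{p\alpha}|u|^p$, so what you need is the embedding $\int_M r^{p\alpha}|u|^p\,dV\lesssim\|u\|_{W^{2,p}}^p$. Your two displayed inequalities only carry the weight $r^{p\alpha/2}$, and neither separately nor combined do they produce the weight $r^{p\alpha}$ on $|u|^p$: to pass from $|u|$ to $|\nabla u|$ \emph{while keeping a weight} you need a Hardy inequality weighted on both sides, namely (in the paper's formulation)
\begin{equation*}
\int \frac{|\nabla G_p|^p}{|G_p|^p}(-\log G_p)^{\beta p}|f|^p\,dV\le \Bigl(\tfrac{p}{p-1}\Bigr)^p\int(-\log G_p)^{\beta p}|\nabla f|^p\,dV,
\end{equation*}
where $G_p$ is the $p$-Green function of the comparison model with $\widetilde\Ric_o=-(n-1)A^2\widetilde r^{\,\alpha}$, so that $|\nabla G_p|/|G_p|\sim r^{\alpha/2}$ and $-\log G_p\sim r^{1+\alpha/2}$. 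Choosing the exponent $\alpha/(\alpha+2)$ and iterating twice gives $\int r^{p\alpha}|f|^p\lesssim\int r^{p\alpha/2}|\nabla f|^p\lesssim\int|\nabla^2f|^p$. With only the unweighted right-hand sides you state, each derivative buys a single factor $r^{\alpha/2}$, which would only reach $\beta=\alpha+2$, not $\beta=2\alpha+2$.

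Parts (b) and (c) have more serious gaps. In (b), Bochner produces the term $b\int r^{\alpha}|\nabla u|^2$ with $b$ \emph{arbitrary}, and your Hardy inequality bounds it by $C(\|u\|_{L^2}^2+\|\nabla^2u\|_{L^2}^2)$ with a \emph{fixed} constant $C$ (of order $(p/(p-1))^p$ times comparison constants); the resulting coefficient $bC$ of $\|\nabla^2u\|_{L^2}^2$ need not be below $1$, and there is no free parameter to shrink it --- splitting $r^\alpha\le\varepsilon r^{2\alpha}+C_\varepsilon$ would require a weight $r^{2\alpha}$ on $|\nabla u|^2$, i.e.\ control by third derivatives. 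The paper manufactures the needed smallness by a conformal change $\tilde g=H^{\beta}g$ together with the \emph{infinitesimal} $CZ(2)$ inequality $\|\widetilde\nabla^2u\|^2\le\frac{\widetilde C\varepsilon^2}{2}\|u\|^2+(1+\frac{\widetilde C^2}{2\varepsilon^2})\|\widetilde\Delta u\|^2$, whose $\varepsilon^2$ survives into the weighted term and is what permits absorption. In (c), testing $u$ directly against cutoffs of large balls cannot work: the argument requires the positive smooth solution $v$ of $(-\Delta+1)v=\phi$ (built by exhaustion, with $v,\Delta v\in L^q$ for every $q$) and one tests $u$ against $(-\Delta+1)(v\chi_R)$. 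The Hardy inequality is then applied to $v\in W^{1,q}$, $q=p/(p-1)$, to kill $\int uv\Delta\chi_R$ (here $|\Delta\chi_R|\lesssim r^{\beta/2-1}=r^{\alpha/2}$ is exactly the admissible first-order weight) --- it cannot be applied ``dually to $u_-$'', which has no Sobolev regularity to feed into a Hardy inequality. Finally, the restriction $p\in[2,+\infty)$ is not cosmetic: it comes from the Coulhon--Duong estimate $\|\nabla v\|_{L^q}\lesssim\|v\|_{L^q}+\|\Delta v\|_{L^q}$, available only for $q\le2$, which is needed to handle $\int u\langle\nabla\chi_R,\nabla v\rangle$. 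None of these ingredients appear in your sketch.
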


\begin{remark}
Note that in (a) and (c) we only require the radial Ricci curvature to satisfy \eqref{bound intro} while in (b) we need \eqref{bound intro} to hold in the sense of quadratic forms. 
Naturally, if one substitutes $\Ric$ in \eqref{bound intro} with the sectional curvature, the results still hold and are actually somewhat easier to prove.
This is due to the fact that in the proof we use a Laplace comparison theorem for Ricci bounded from above which holds on Cartan-Hadamard manifolds; see Subsection \ref{ss:Comparison} and \Cref{rmk:uncommon bounds}. 
\end{remark}

\begin{remark}
An additional property that extends from the Euclidean setting to the case of Riemannian manifolds with Ricci curvature bounded form below and non-vanishing injectivity radius
is the validity of an $L^p$-Sobolev inequality of the form 
\begin{equation}
\label{SobIntro}
\Vert\varphi\Vert_{L^q(M)} \le C(\Vert\nabla\varphi\Vert_{L^p(M)}+\Vert\varphi\Vert_{L^p(M)}),\qquad \forall \varphi\in C^\infty_c(M),
\end{equation}
where $1\le p \le n$, $q=np/(n-p)$, and $C>0$. For reference see \cite[Theorem 3.2]{He99}.
Also in this case there are known counterexamples if one drops the curvature assumptions \cite[Proposition 3.4]{He99}.
On Cartan-Hadamard manifolds, however, the Sobolev inequality \eqref{SobIntro} is satisfied without further curvature assumptions as a consequence of the isoperimetric inequality, \cite{HS1974}.
\end{remark}

Unlike the case of manifolds with lower bounded Ricci curvature, it is impossible to obtain Theorem \ref{th_main} (c) for $p=\infty$.
Indeed, as observed by B. G\"uneysu in \cite{G2016}, the $L^\infty$-positivity preserving property implies the stochastic completeness of the manifold at hand. 
See also \Cref{rmk:stoch compl} below. 
It turns out that Cartan-Hadamard manifolds satisfying $\Ric \leq - ar^\alpha$ for $\alpha > 2$ are not stochastically complete; see Theorem \ref{thm:comparison stochastic completeness} below. 
Conversely, one can prove that
\begin{mytheorem}
\label{thm:LpPP intro}
Let $(M,g)$ be a complete Riemannian manifold satisfying
\begin{equation*}
    -\lambda^{2}(r(x)) \leq \Ric(x) \quad \forall x \in M\setminus B_{R_0},
\end{equation*}
with $\lambda$ given by
\begin{equation*}
    \lambda(t) = \alpha t \prod_{j = 0}^k \log^{[j]}(t)
\end{equation*}
where $\alpha>0$, $k\in\mathbb{N}$ and $\log^{[j]}(t)$ stands for the $j$-th iterated logarithm. 
Then $M$ is $L^p$-positivity preserving for any $p\in[1,\infty]$.
\end{mytheorem}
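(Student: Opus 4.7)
The plan is to extend G\"uneysu's cutoff technique for the $L^p$-positivity preserving property to the present critical setting, in which $\lambda$ is allowed to grow essentially as fast as Hsu's stochastic completeness criterion permits. The starting point is the distributional Kato inequality: if $u\in L^p(M)$ satisfies $(-\Delta+1)u\ge 0$, then $v:=u_-$ fulfills $\Delta v\ge v$ in the sense of distributions. It suffices then to show $v\equiv 0$.

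The core analytic step is the construction of a sequence of smooth radial cutoffs $\phi_k=f_k(r)\in C^\infty_0(M)$ satisfying $0\le \phi_k\le 1$, $\phi_k\nearrow 1$, and $\|\nabla\phi_k\|_{\infty}\to 0$, $\|\Delta\phi_k\|_{\infty}\to 0$. Under the hypothesis $\Ric\ge -\lambda^2(r)$, the Laplace comparison theorem gives $\Delta r\le C\lambda(r)$ outside a compact set (in the barrier sense, to be handled via Calabi's trick across the cut locus); hence, for a decreasing radial profile $f_k$,
\begin{equation*}
|\Delta \phi_k|\le |f_k''|+C|f_k'|\lambda(r).
\end{equation*}
A natural choice is $f_k'(r)=-1/(k\lambda(r))$ on a transition annulus $[R_k,R_k']$, with $R_k\to\infty$ and $R_k'$ defined by $\int_{R_k}^{R_k'}\! ds/\lambda(s)=k$. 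The very point of the iterated-logarithm form of $\lambda$ is to guarantee $\int^{\infty}\!ds/\lambda(s)=\infty$, which makes the annulus always available; one then verifies that $|f_k'|\lambda$, $|f_k'|$ and $|f_k''|$ all tend to zero uniformly as $k\to\infty$.

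With the cutoffs in hand, for $p\in(1,\infty)$ I would test the distributional inequality $\Delta v\ge v$ against (a regularization of) $v^{p-1}\phi_k^2$: integration by parts combined with Young's inequality yields
\begin{equation*}
\int_M v^p\phi_k^2\,dV \ \le\ \frac{1}{p-1}\int_M v^p\,|\nabla\phi_k|^2\,dV,
\end{equation*}
and letting $k\to\infty$ gives $\|v\|_p=0$ by monotone convergence on the left and the smallness of $\|\nabla\phi_k\|_\infty$ on the right. The case $p=1$ is simpler: one tests directly against $\phi_k$ and uses $\|\Delta\phi_k\|_\infty\to 0$ together with $v\in L^1$. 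For $p=\infty$ the cutoff argument is no longer available, and I would instead exploit the equivalence between $L^\infty$-positivity preserving and stochastic completeness: the Hsu integral divergence $\int^\infty ds/\lambda(s)=\infty$ ensures stochastic completeness and hence the $L^\infty$ case.

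The hardest point is the cutoff construction in the second paragraph: one needs simultaneous sup-norm control of $|\nabla \phi_k|$ and $|\Delta\phi_k|$ under the almost minimal lower bound on Ricci. The iterated-logarithm choice of $\lambda$ is tailored precisely to preserve $\int^\infty ds/\lambda(s)=\infty$ while allowing $\lambda$ to grow as fast as possible; any appreciably faster $\lambda$ would simultaneously violate Hsu's criterion and rule out the cutoff construction, in line with the sharpness observed after Theorem \ref{th_main}. A secondary subtlety is the rigorous justification of the integration-by-parts when $v$ has no a priori Sobolev regularity: this is overcome through truncations $v\wedge N$ and Friedrichs mollification, exploiting the subsolution property $v$ inherits through Kato's inequality.
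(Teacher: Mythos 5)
Your route (Kato's inequality applied to $u_-$, then a Caccioppoli/cutoff argument) is genuinely different from the paper's, and it contains two gaps, one of which is fatal as written. The fatal one is the cutoff construction. A lower Ricci bound controls $\Delta r$ only from \emph{above} via the Laplacian comparison; there is no lower bound on $\Delta r$, and this has nothing to do with the cut locus or Calabi's trick (a manifold with $\Ric\ge-\lambda^2(r)$ may have arbitrarily large positive curvature, where geodesic spheres have very negative mean curvature). For a decreasing radial profile $f_k$ the term $f_k'(r)\Delta r$ is therefore bounded from below by $-C|f_k'|\lambda(r)$ but has no upper bound, so $\sup_M(\Delta\phi_k)_+$ --- which is exactly what your $p=1$ test $\int_M v\,\phi_k\,dV_g\le\int_M v\,\Delta\phi_k\,dV_g$ requires --- is not controlled, let alone tending to $0$. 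This is precisely why the paper resorts to the non-radial Laplacian cut-offs of \Cref{thm:cutoffs of IRV}, and even those only achieve $|\Delta\chi_R|\le C_2$ \emph{bounded}, not vanishing; the paper compensates by pairing $\Delta\chi_R$ with the fixed smooth solution $v\in L^q$ of $(-\Delta+1)v=\phi$ from \Cref{lem:Lp pp smooth} and invoking dominated convergence, rather than a sup-norm bound.

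The second gap is the reduction via Kato. From $(-\Delta+1)u\ge0$ you only know that $\Delta u$ is a signed Radon measure, so you need the Brezis--Ponce form of Kato's inequality for measure-valued Laplacians on a manifold, together with enough regularity on $u_-$ (an upper semicontinuous subharmonic representative lying in $W^{1,2}_{\loc}$, and a treatment of the singularity of $v^{p-2}$ for $p<2$) to legitimize the test function $u_-^{p-1}\phi_k^2$. These omissions are not cosmetic: if they were free, your $p\in(1,\infty)$ argument would run with ordinary Gaffney cutoffs satisfying $\|\nabla\phi_k\|_\infty\to0$ on \emph{any} complete manifold and would thus settle the full BMS conjecture with no curvature hypothesis --- a strong signal that the suppressed steps carry the real difficulty. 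The paper's proof (\Cref{thm:Lp pp subquadratic}) sidesteps all of this: it never regularizes $u$, but pairs it with $(-\Delta+1)(v\chi_R)$, and the ingredient replacing the unavailable $L^q$-gradient estimates for $q>2$ is the Li--Yau bound $|\nabla v|\le C\lambda(R)\,v$ on annuli (\Cref{thm:Li-Yau of Bianchi and Setti}), which exactly offsets $|\nabla\chi_R|\le C_1/\lambda(R)$. Finally, deducing the $p=\infty$ case from Hsu's theorem is not logically wrong (granting the Kato step), but it inverts the paper's purpose, which is to obtain Hsu's stochastic completeness criterion as a corollary of the $L^\infty$-positivity preservation.
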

As a corollary of the $p=\infty$ case, we get in particular that a manifold at hand is stochastically complete. 
This gives a new proof of a celebrated condition for the stochastic completeness due to P. Hsu, \cite{Hsu1989}. See \Cref{rmk:hsu}.

Beyond their obvious topological triviality, the Cartan-Hadamard manifolds we consider in Theorem \ref{th_main} have also quite strong metrical properties. On the one hand, the lower bound $-br^\beta(x)$ for the Ricci curvature implies a Laplacian comparison, i.e, an upper control on $\Delta r$. 
This, in turn, permits to construct suitable Hessian and Laplacian cut-off functions. 
Namely, one gets the existence of a family of smooth cutoffs $\{\chi_R\}\in C^\infty_0(M)$ with $R>>1$ such that
\begin{enumerate}
    \item $\chi_R \equiv 1$ on $B_R$ and $\chi_R \equiv 0$ on $M \setminus \overline{B_{2 R}}$;
    \item $|\nabla \chi_R| \leq \frac{C_1}{R}$;
    \item $|\nabla^2 \chi_R| \leq C_2 R^{\frac{\beta}{2} -1}$, 
\end{enumerate}
with $C_1, C_2 > 0$ (see Lemma \ref{lem:existence of cutoffs}). Most of the strategies proposed in previous literature to approach the density problem or the $L^p$-positivity preservation are precisely based on the existence of suitable cut-off functions which have bounded covariant derivatives up to the second order, for instance in the subquadratic case.
Conversely, the control that we get on $|\nabla^2\chi_R|$ under our assumptions is not strong enough to allow us to obtain Theorem \ref{th_main} (a) and (c) by this strategy alone. 
The reason is essentially that, when $\beta >2$, the sole lower bound $\Ric\ge -br^\beta$ cannot guarantee that for any function $f$
\begin{equation}
\label{eq:condition on f}
f\in W^{2,p} \implies |\nabla^2\chi_R|f\in L^p.
\end{equation}
Instead, assuming also that $\Ric\le -a r^\alpha$, one gets \begin{equation}
\label{eq:condition on f weight}
f\in W^{2,p} \implies (r^\alpha f)\in L^p,
\end{equation}
see Theorem \ref{thm:rellich with weight sobolev}.
This latter relation, combined with the properties of the Hessian cut-off functions, yields \eqref{eq:condition on f}.

To obtain \eqref{eq:condition on f weight}, we exploit the validity on $\Omega\subset M$ of certain Hardy-type inequalities (obtained elaborating on ideas by L. D'Ambrosio and S. Dipierro, \cite{DD2014}) of the form
\begin{equation*}
   \int_\Omega \frac{|\nabla G |^p}{|G|^p} (-\log{G})^{\beta p} |f|^p dV_g \leq  \left(\frac{p}{p-1}\right)^p\int_\Omega (-\log{G})^{\beta p} |\nabla f |^p dV_g \qquad \forall f \in C^\infty_0(\Omega),
\end{equation*}
where $G \in C^\infty(\Omega)$ satisfies
\begin{enumerate}[label=(\roman*)]
    \item $-\Delta_p G \geq 0$ on $\Omega$;
    \item $0 \leq G \leq c < 1$;
\end{enumerate}
and $p \in (1, +\infty)$, see Theorem \ref{thm:log hardy} and \Cref{thm:rellich with weight sobolev}. 
Using a Laplacian comparison for Cartan-Hadamard manifolds, it turns out that an appropriate choice for $G$ is the Green function for the $p$-Laplacian of the model manifold $\widetilde{M}$ whose (radial) Ricci curvature is precisely $-a r^\alpha $. 

In order to prove Theorem \ref{th_main} (b) a further ingredient is needed. 
Using a special conformal deformation of $M$ based on the distance function, see \cite{IRV2019}, we prove first the validity of the disturbed infinitesimal Calder\'on-Zygmund inequality 
\begin{equation*}
    \Vert \nabla^2\varphi \Vert_{L^2} \leq A_1(\varepsilon) \left[ \Vert \Delta \varphi \Vert_{L^2} + \Vert \varphi \Vert_{L^2} \right] + A_2\varepsilon^2 \Vert r^\beta \varphi \Vert_{L^2} \qquad \forall \varphi \in C^\infty_0(M).
\end{equation*}
when $\Ric\ge - b\, r^\beta$; see Theorem 5.1. Then, one can conclude using again the Hardy-type inequalities. 

We conclude this introduction with some words on the novelty of our proof in Theorem \ref{thm:LpPP intro}. 
Following the strategy adopted in \cite[Appendix B]{BMS02} and \cite[Theorem XIV.31]{Gu17}, a key step in the proof of the $L^p$-positivity preserving is to show that for a given $\psi\in C^\infty_c$, there exists a positive solution $v\in C^\infty\cap W^{1,q}(M)$ of $-\Delta v + v =\psi$, with $1/q =1-1/p$. 
While standard elliptic regularity theory ensures that $v\in L^{q}(M)$ (and hence $\Delta v\in L^{q}(M)$), the fact that $\nabla v\in L^{q}(M)$ is non-trivial.
When $q\in (1,2]$ (i.e. $p\in [2,\infty)$) it is a consequence of the $L^q$-gradient estimates $\|\nabla v\|_{L^{q}}\le C(\| v\|_{L^{q}}+\|\Delta v\|_{L^q})$, \cite{CD2003}.
However, these estimates are not known a priori for $q>2$ when the negative part of $\Ric$ is unbounded.
Instead, we use a version of Li-Yau gradient estimates, \cite{BS2018}, to prove that $|\nabla v|(x)\le \lambda(r(x))v(x)$ outside a compact set. 
Hence, $\nabla v$ is ``almost'' in $L^q$, which is enough to our purpose.

\vspace{\baselineskip}
\vspace{\baselineskip}

The paper is organized as follows. 
In \Cref{sec:CH estimates} we construct and estimate the Green function for the $p$-Laplacian, $G_p$, on a model manifold with radial Ricci curvature $-a r^\alpha$. 
Then, using a Laplacian comparison for Cartan-Hadamard manifolds, we show that this function is $p$-superharmonic on a Cartan-Hadamard manifold whose Ricci curvature is bounded from above by $-a r^\alpha$. 
\Cref{sec:Hardy and Rellich} is devoted the proof of the Hardy-type inequalities whose weight is given in terms of $G_p$. 
In \Cref{sec:density}, \ref{sec:CZ(2)} and \ref{sec:L^p pp and BMS} we prove respectively part (a), (b) and (c) of \Cref{th_main}. 
In \Cref{sec:L^p pp and BMS} we also prove \Cref{thm:LpPP intro}. 

\begin{notation}
Throughout the paper, $C$ will denote a real positive constant whose value can change from line to line. 
Whenever appropriate, we will explicit its dependency on other constant or parameters.  
\end{notation}

\section{Estimates on Cartan-Hadamard manifolds}
\label{sec:CH estimates}
The goal of this section is to obtain asymptotic estimates for several geometric objects on Cartan-Hadamard manifolds whose (radial) Ricci curvature is bounded form above by $-ar^\alpha$.
\subsection{Model manifold case}
\label{subs:model manifold}
We begin by studying the geometry of model manifolds with prescribed Ricci curvature.
By direct computation we obtain asymptotic estimates for the $p$-Green function and the Laplacian of the Riemannian distance. 

Let $(\widetilde{M}, \widetilde{g}) = [0, +\infty) \times_{j} \mathbb{S}^{n-1}$ be a model manifold in the sense of E. R. Greene and H. Wu \cite{GW1979}, that is, $[0, +\infty) \times \mathbb{S}^{n-1}$ endowed with the metric
\begin{equation*}
    \widetilde{g} = dt^2 + j^2(t) d\theta^2,
\end{equation*}
where $d\theta^2$ is the standard metric on $\mathbb{S}^{n-1}$ and $j \in C^\infty((0, +\infty))$ such that $j > 0$ on $(0, +\infty)$, $j(0) = 0$, $j'(0) = 1$ and $j^{(2k)}(0) = 0$ for $k \in \mathbb{N}$. 
Denote with $\widetilde{\nabla}, \widetilde{\Delta}$ and $\widetilde{\Ric}$ the covariant derivative, Laplacian and Ricci tensor of $(\widetilde{M}, \widetilde{g})$ respectively, similarly $\widetilde{r}(x)$ is the Riemannian distance from the pole $o$ (so that $\widetilde{r}(t,\theta)=t$). 

Suppose $(\widetilde{M}, \widetilde{g})$ satisfies
\begin{equation*}
    \widetilde{\Ric}_o(x) = - (n-1)A^2 \widetilde{r}^\alpha(x),
\end{equation*}
where $A > 0$, $\alpha \geq 0$ and $\widetilde{\Ric_o}$ denotes Ricci curvature in the radial direction $\widetilde{\nabla}\widetilde{r}$. 
Since on model manifolds
\begin{equation*}
    \widetilde{\Ric}_o(x) = -(n-1) \frac{j''(\widetilde{r}(x))}{j(\widetilde{r}(x))},
\end{equation*}
$j$ needs to solve 
\begin{equation}
    \label{eq:ODE comparison}
    \begin{cases}
    j''(t) - A^2t^\alpha j(t) = 0 \\
    j(0) = 0, \quad j'(0) = 1
    \end{cases}
\end{equation}
for $t \in [0, + \infty)$.
By classical ODE theory we have  
\begin{equation}
    \label{eq:warping function model}
    j(t) = D \sqrt{t} I_\nu\left(2 A \nu t^{1 /2\nu}\right), \quad \nu = \frac{1}{\alpha + 2},
\end{equation}
where $D$ is a positive constant and $I_\nu(t)$ is the modified Bessel function of the first kind and order $\nu$, i.e., a positive solution of the Bessel equation
\begin{equation*}
    \label{eq:bessel}
    t^2I_\nu''(t) + tI_\nu'(t) -(t^2 + \nu^2)I_\nu(t) = 0, 
\end{equation*}
see for reference \cite{BMR2013} and \cite{L1972}.
Note that, since
\begin{equation*}
    I_\nu(t) \sim \frac{t^\nu}{\Gamma(\nu+1)} \qquad t \to 0
\end{equation*}
then
\begin{equation*}
    j(t) \sim D \frac{(2\nu A)^\nu}{\Gamma(\nu + 1)}t \qquad t \to 0, 
\end{equation*}
hence, $D = \frac{\Gamma(\nu + 1)}{(2\nu A)^\nu}$. 

Using the following asymptotic
\begin{equation}
    \label{eq:asymptotic bessel}
    I_\nu(t) \sim \frac{e^t}{\sqrt{2\pi t}} \quad t \to +\infty,
\end{equation}
we have
\begin{equation}
    \label{eq:asymptotic j}
    j(t) \sim D_0 t^{-\frac{\alpha}{4}} \exp\left( \frac{2A}{\alpha + 2} t^{1+\frac{\alpha}{2}}\right) \quad t \to + \infty, \quad D_0 = \frac{D}{\sqrt{4\nu\pi A}}.
\end{equation}
Since $\vol(\partial B_t) = \omega_n j(t)^{n-1}$ where $\omega_n$ is the volume of the Euclidean $n$-dimensional unit ball, \eqref{eq:asymptotic j} implies that 
\begin{equation*}
    \left(\frac{1}{\vol(\partial B_t)}\right)^{\frac{1}{p-1}} \in L^1(+\infty), \qquad \forall p>1. 
\end{equation*}
By \cite[Corollary 5.2]{T1999} we deduce that $(\widetilde{M}, \widetilde{g})$ is $p$-hyperbolic, that is, there exists a symmetric positive Green function for the $p$-Laplacian. 
Specifically, the positive $p$-Green function with pole $o \in \widetilde{M}$ is a radial function given by
\begin{equation}
    \label{eq:p green function}
    G_p(x) = G_p(t) \coloneqq \int_t^{+\infty}\left(\frac{1}{j(s)}\right)^{\frac{n-1}{p-1}} ds, \quad x = (t, \theta) \in (0, +\infty) \times \mathbb{S}^{n-1}.
\end{equation}
Using \eqref{eq:asymptotic j} we obtain 
\begin{equation}
    \label{eq:asymptotic G'}
    \partial_t G_p(t) \sim -D_1 t^{\frac{\alpha}{4}\frac{n-1}{p-1}} \exp\left(- A\frac{2}{\alpha + 2} \frac{n-1}{p-1} t^{1+\frac{\alpha}{2}}\right) \quad t \to +\infty,
\end{equation}
and
\begin{equation}
    \label{eq:asymptotic G}
    G_p(t) \sim D_2 t^{\frac{\alpha}{4}\left(\frac{n-1}{p-1}-2\right)} \exp\left(- A\frac{2}{\alpha + 2} \frac{n-1}{p-1} t^{1+\frac{\alpha}{2}}\right) \quad t \to +\infty, 
\end{equation}
where  $D_1, D_2$ are positive constants depending on $D, \alpha, n$ and $p$.
Note that $\partial_t G_p(t) < 0$ for all $t>0$.

Next, we compute the Laplacian of the Riemannian distance given by
\begin{equation*}
    \widetilde{\Delta}\widetilde{r} = (n-1)\frac{j'(\widetilde{r})}{j(\widetilde{r})}. 
\end{equation*}
By a simple computation we have
\begin{equation*}
    \frac{j'(t)}{j(t)} = \frac{1}{2t} + At^{\frac{\alpha}{2}}\frac{I_\nu'\left(2 A \nu t^{1 /2\nu}\right)}{I_\nu\left(2 A \nu t^{1 /2\nu}\right)}. 
\end{equation*}
Using the recurrence relation $2I_\nu'(t) =I_{\nu+1}(t) + I_{\nu-1}(t)$ and \eqref{eq:asymptotic bessel}, we conclude that $I_\nu'\sim I_\nu$ therefore
\begin{equation}
    \label{eq:asymptotic j'/j}
    \frac{j'(t)}{j(t)} \sim At^{\frac{\alpha}{2}} \qquad t \to + \infty. 
\end{equation}
Finally, using \eqref{eq:asymptotic j} once again we deduce
\begin{equation}
    \label{eq:int j}
    \int_0^t j^{n-1}(s) ds \sim D_3 t^{-\frac{\alpha}{4}(n+1)} \exp\left( \frac{2A}{\alpha + 2} (n-1) t^{1+\frac{\alpha}{2}}\right)
\end{equation}
for some positive constant $D_3$, so that 
\begin{equation}
    \label{eq:asymptotic stoch compl}
    \frac{\int_0^t j^{n-1}(s) ds}{j^{(n-1)}(t)} \sim D_4 t^{-\frac{\alpha}{2}}. 
\end{equation}

\subsection{Comparison results for Cartan-Hadamard manifolds}\label{ss:Comparison}
Next, we relate via the Laplacian comparison the above estimates to a Cartan-Hadamard manifold with a suitable bound on the Ricci curvature. 

Let $(M, g)$ be a Cartan-Hadamard manifold of dimension $n\geq2$ with a fixed pole $o \in M$ and suppose that
\begin{equation}
    \label{eq:upper bound ric}
    \Ric_o(x) \le -2(n-1)^2A^2 r^\alpha(x) \quad \forall x \in M\setminus B_{R_0}
\end{equation}
for some $A, R_0 > 0$ and $\alpha \geq 0$, here $r(x)$ denotes the Riemannian distance from the pole.
Let $(\widehat{M}, \widehat{g})$  be the model manifold of radial Ricci curvature
\begin{equation*}
    \widehat{\Ric}_o(\hat{x}) = -2(n-1)^3A^2 \widehat{r}^\alpha(\hat{x}), 
\end{equation*}
that is, $(\widehat{M}, \widehat{g}) = [0, +\infty) \times_{\widehat{j}} \mathbb{S}^{n-1}$ where
\begin{equation*}
    \widehat{j}(t) = \widehat{D} \sqrt{t} I_\nu\left(2 \widehat{A} \nu t^{1 /2\nu}\right), \quad \nu = \frac{1}{\alpha + 2}, \qquad \widehat{A} = \sqrt{2}(n-1)A. 
\end{equation*}
Since
\begin{equation*}
    \Ric_o(x) \le \frac{1}{n-1} \widehat{\Ric}_o(\hat{x}), 
\end{equation*}
for all $x \in M\setminus B_{R_0}$ and $\hat{x} \in \widehat{M}$ with $r(x) = \hat{r}(\hat{x})$, by \cite[Theorem 2.15]{X1996} and estimate \eqref{eq:asymptotic j'/j} we have 
\begin{equation*}
    \Delta r \geq \frac{\widehat{j}'(r)}{\widehat{j}(r)} \sim \widehat{A}r^{\frac{\alpha}{2}} =  \sqrt{2}(n-1)Ar^{\frac{\alpha}{2}} \qquad r \to + \infty. 
\end{equation*}
It follows that 
\begin{equation*}
    \frac{\widehat{j}'(r)}{\widehat{j}(r)} \sim \sqrt{2}(n-1)\frac{j'(r)}{j(r)}
\end{equation*}
where $j$ is as in \eqref{eq:warping function model}.
In particular, if $r>>1$ is large enough we can assume that
\begin{equation*}
   \Delta r \geq \frac{\widehat{j}'(r)}{\widehat{j}(r)} \geq (n-1)\frac{j'(r)}{j(r)}.
\end{equation*}
Note here that
\begin{equation*}
    \widetilde{\Delta} \widetilde{r} = (n-1)\frac{j'(\widetilde{r})}{j(\widetilde{r})}
\end{equation*}
is the Laplacian of the Riemannian distance on the model $(\widetilde{M}, \widetilde{g}) = [0, +\infty) \times_{j} \mathbb{S}^{n-1}$ considered in \Cref{subs:model manifold}.
In summary, we have the following comparison result. 

\begin{proposition}
\label{prop:comparison CH}
Let $(M, g)$ be a Cartan-Hadamard manifold of dimension $n\geq2$ with
\begin{equation*}
    \Ric_o(x) \le -2(n-1)^2A^2 r^\alpha(x) \quad \forall x \in M\setminus B_{R_0}
\end{equation*}
for some $A, R_0 > 0, \alpha \geq 0$.
Let $j$ be as in \eqref{eq:warping function model}, i.e., $j$ is the warping function of $(\widetilde{M}, \widetilde{g}) = [0, +\infty) \times_{j} \mathbb{S}^{n-1}$, model manifold of radial Ricci curvature
\begin{equation*}
    \widetilde{\Ric}_o(\widetilde{x}) = - (n-1)A^2 \widetilde{r}^\alpha(\widetilde{x}).
\end{equation*}
Then, if $r(x)>>1$ we have
\begin{equation}
    \Delta r \geq (n-1)\frac{j'(r)}{j(r)}.  
\end{equation}
\end{proposition}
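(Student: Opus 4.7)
The strategy is essentially laid out in the discussion preceding the statement, so my plan is to make each step precise and verify the constants work out.

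First, I would introduce the auxiliary model manifold $(\widehat{M},\widehat{g}) = [0,+\infty)\times_{\widehat{j}}\mathbb{S}^{n-1}$ with warping function $\widehat{j}(t) = \widehat{D}\sqrt{t}\,I_\nu(2\widehat{A}\nu t^{1/2\nu})$ where $\widehat{A} = \sqrt{2}(n-1)A$, so that $\widehat{\Ric}_o(\hat x) = -2(n-1)^3 A^2\,\widehat{r}^\alpha(\hat x)$. The point of inflating the constant from $A$ to $\widehat{A}$ is to create a strict asymptotic gap that will let me pass from the comparison with $\widehat j$ to the desired comparison with $j$. The hypothesis $\Ric_o(x)\le -2(n-1)^2A^2 r^\alpha(x)$ is chosen precisely so that, when $r(x) = \widehat{r}(\hat x)$,
\begin{equation*}
\Ric_o(x) \le \frac{1}{n-1}\widehat{\Ric}_o(\hat x),
\end{equation*}
which is the exact Ricci-upper-bound hypothesis required to apply the non-standard Laplacian comparison theorem on a Cartan--Hadamard manifold. (This is the direction of comparison that is only available under non-positive sectional curvature; it is the reason the whole framework is set in the Cartan--Hadamard class.)

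Next, I would invoke Xin's comparison \cite[Theorem 2.15]{X1996}, which under the above Ricci upper bound and the Cartan--Hadamard assumption yields
\begin{equation*}
\Delta r(x) \ge \frac{\widehat{j}'(r(x))}{\widehat{j}(r(x))}
\end{equation*}
for $r(x)>R_0$. This step reduces everything to an elementary comparison between the ratios $\widehat{j}'/\widehat{j}$ and $j'/j$.

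Finally, I would use the asymptotic \eqref{eq:asymptotic j'/j} already computed in Subsection \ref{subs:model manifold}, namely $j'(t)/j(t)\sim A\,t^{\alpha/2}$ and, applied to $\widehat{j}$ with parameter $\widehat A$, $\widehat{j}'(t)/\widehat{j}(t)\sim \widehat{A}\,t^{\alpha/2} = \sqrt{2}(n-1)A\,t^{\alpha/2}$. Since $\sqrt{2}>1$, the ratio $(\widehat{j}'/\widehat{j})/((n-1)j'/j)$ tends to $\sqrt{2}>1$, so there exists $R_1\ge R_0$ such that $\widehat{j}'(r)/\widehat{j}(r)\ge (n-1)j'(r)/j(r)$ for all $r\ge R_1$. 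Chaining this with the Xin comparison gives the claimed $\Delta r \ge (n-1)j'(r)/j(r)$ for $r>>1$.

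The only place where I would need to be careful is the application of Xin's theorem: one must verify that its hypotheses (cut-locus issues, the precise form of the Ricci-from-above comparison) are satisfied. On a Cartan--Hadamard manifold the exponential map is a global diffeomorphism, so $r$ is smooth away from the pole and there is no cut-locus to worry about, which is exactly what makes the upper Ricci bound usable here. Everything else is either the ODE asymptotics already established in the preceding subsection or the trivial inequality $\sqrt{2}>1$.
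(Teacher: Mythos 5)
Your proposal is correct and follows essentially the same route as the paper: the auxiliary model $(\widehat{M},\widehat{g})$ with $\widehat{A}=\sqrt{2}(n-1)A$, the application of \cite[Theorem 2.15]{X1996} under the quotient bound $\Ric_o\le\frac{1}{n-1}\widehat{\Ric}_o$, and the asymptotic comparison $\widehat{j}'/\widehat{j}\sim\sqrt{2}(n-1)A\,r^{\alpha/2}$ versus $(n-1)j'/j\sim(n-1)A\,r^{\alpha/2}$ exploiting $\sqrt{2}>1$ are exactly the steps the paper carries out in the discussion preceding the proposition. The constants all check out, so nothing further is needed.
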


\begin{remark}
\label{rmk:uncommon bounds}
The slightly uncommon bound we require in \eqref{eq:upper bound ric} is due to the fact that we make use of a Laplacian comparison result for Cartan-Hadamard manifolds which is different from the classical one and holds with an upper bound for the Ricci curvature instead of an upper bound for the sectional curvature.  
Note also that the constant $2$ in \eqref{eq:upper bound ric} is quite arbitrary: one could replace it with any constant strictly greater than $1$. 
\end{remark}

We begin with the following lemma. 

\begin{lemma}
\label{lem:comparison p-harmonic functions}
Let $(M, g)$ be a Cartan-Hadamard manifold and suppose that
\begin{equation}
    \Delta r \geq \phi(r) \text{ on }\Omega\subseteq M,
\end{equation}
for some $\phi \in C^0((0, +\infty))$ and $\Omega$ open. 
Let $v \in C^2(\mathbb{R})$ nonnegative and define $u(x) = v(r(x))$ for $x \in \Omega$. 
If $v' < 0$, then for all $p>1$ we have
\begin{equation}
    \label{eq:comparison p-Laplacian}
    \Delta_p u \leq |v'|^{p-2}(v'\phi(r) + (p-1)v''),
\end{equation}
on $\Omega \setminus \{o\}$.

\begin{proof}
Since $(M, g)$ is Cartan-Hadamard, then $r \in C^\infty(M \setminus \{o\})$ so that $u \in C^2(M \setminus \{o\})$. 
Suppose $v' < 0$, then 
\begin{align*}
\Delta_p u &= \Div(|\nabla u|^{p-2}\nabla u) = \Div(|v'|^{p-2}v' \nabla r) = |v'|^{p-2}(v'\Delta r + (p-1)v'') \\
&\leq |v'|^{p-2}(v'\phi(r) + (p-1)v''),
\end{align*}
on $\Omega\setminus \{o\}$. 
\end{proof}
\end{lemma}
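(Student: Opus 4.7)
The plan is to directly compute $\Delta_p u$ by expanding the divergence and then use the hypothesis $\Delta r \geq \phi(r)$, taking care of the sign coming from $v' < 0$.

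First, I would observe that on a Cartan-Hadamard manifold the distance function $r$ from the pole $o$ is smooth on $M\setminus\{o\}$ (no cut locus, by Hadamard's theorem), with $|\nabla r|\equiv 1$. Consequently $u=v\circ r\in C^2(\Omega\setminus\{o\})$, and $\nabla u=v'(r)\nabla r$, so $|\nabla u|=|v'(r)|$, which is strictly positive since $v'<0$. This makes the $p$-Laplacian $\Delta_p u=\Div(|\nabla u|^{p-2}\nabla u)$ a legitimate pointwise quantity on $\Omega\setminus\{o\}$ without any regularization.

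Next, I would write $|\nabla u|^{p-2}\nabla u=|v'(r)|^{p-2}v'(r)\,\nabla r$ and apply the product rule for the divergence:
\begin{equation*}
\Delta_p u = \nabla\bigl(|v'(r)|^{p-2}v'(r)\bigr)\cdot\nabla r + |v'(r)|^{p-2}v'(r)\,\Delta r.
\end{equation*}
Since $|v'|^{p-2}v'$ is a function of $r$ only, a direct chain rule computation (using $v'<0$, so $|v'|^{p-2}v'=-(-v')^{p-1}$) gives
\begin{equation*}
\frac{d}{dr}\bigl(|v'|^{p-2}v'\bigr)=(p-1)|v'|^{p-2}v'',
\end{equation*}
hence the first term equals $(p-1)|v'|^{p-2}v''$ (since $|\nabla r|^2=1$).

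Finally, I would use the hypothesis $\Delta r\geq \phi(r)$. Because $v'<0$, multiplying both sides by $|v'|^{p-2}v'<0$ reverses the inequality, yielding $|v'|^{p-2}v'\,\Delta r\leq |v'|^{p-2}v'\,\phi(r)$. Combining with the previous identity gives exactly \eqref{eq:comparison p-Laplacian}. There is no real obstacle here; the only thing to be careful about is the sign flip in the last step, which is precisely where the hypothesis $v'<0$ is used.
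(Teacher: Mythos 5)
Your proposal is correct and follows essentially the same computation as the paper: expand $\Div(|v'|^{p-2}v'\nabla r)$ via the product rule, identify the radial derivative term as $(p-1)|v'|^{p-2}v''$, and use $v'<0$ to reverse the inequality when substituting $\Delta r\geq\phi(r)$. You merely spell out the smoothness of $r$ and the sign bookkeeping in more detail than the paper does.
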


\begin{remark}
Although it is not relevant to our work, we observe that if $v'>0$, then \eqref{eq:comparison p-Laplacian} holds with the opposite sign. 
\end{remark}

Combining \Cref{lem:comparison p-harmonic functions} with \Cref{prop:comparison CH} we obtain a comparison result for radial $p$-harmonic functions. 

\begin{proposition}
\label{prop:comparison p harmonic CH}
Let $(M, g)$ be a Cartan-Hadamard manifold satisfying \eqref{eq:upper bound ric} and let $(\widetilde{M}, \widetilde{g})$ be the model manifold as in \Cref{subs:model manifold}. 
Let $v \in C^2(\R)$ non-negative with $v'<0$ and define $u(x) = v(r(x))$ and $\widetilde{u}(\widetilde{x}) = v(\widetilde{r}(\widetilde{x}))$. 
Then $\Delta_p u(x) \leq \widetilde{\Delta}_p \widetilde{u}(\widetilde{x})$ for all $x \in M$ and $\widetilde{x} \in \widetilde{M}$ such that  $r(x) = \widetilde{r}(\widetilde{x}) >> 1$.
\begin{proof}
By \Cref{prop:comparison CH}, if $r(x)>>1$, then $\Delta r \geq (n-1) j'(r)/j(r)$, hence
\begin{equation*}
    \Delta_p u(x) \leq |v'(r(x))|^{p-2}\left[v'(r(x))(m-1)\frac{j'(r(x))}{j(r(x)} + (p-1)v''(r(x))\right] = \widetilde{\Delta}_p \widetilde{u}(\widetilde{x}). 
\end{equation*}
\end{proof}
\end{proposition}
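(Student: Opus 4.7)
The plan is to combine the Laplacian comparison of \Cref{prop:comparison CH} with the chain-rule computation already carried out in \Cref{lem:comparison p-harmonic functions}, exploiting the fact that the same computation becomes an equality on the model $(\widetilde{M},\widetilde{g})$.

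First, by \Cref{prop:comparison CH}, for $r(x) \gg 1$ one has $\Delta r(x) \geq (n-1)j'(r(x))/j(r(x))$. I would then apply \Cref{lem:comparison p-harmonic functions} with $\phi(t) = (n-1)j'(t)/j(t)$ and $\Omega = \{r > R\}$ for $R$ large enough, noting that since $u=v\circ r$ with $v\in C^2$ and $r\in C^\infty(M\setminus\{o\})$ on a Cartan-Hadamard manifold, the computation is classical. The sign hypothesis $v'<0$ is used crucially here: it makes $|v'|^{p-2}v'$ non-positive, so multiplying the inequality $\Delta r \geq \phi(r)$ by this factor reverses the inequality in the correct direction. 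The outcome is
\begin{equation*}
\Delta_p u(x) \leq |v'(r(x))|^{p-2}\Bigl[(n-1)\frac{j'(r(x))}{j(r(x))}\,v'(r(x)) + (p-1)v''(r(x))\Bigr].
\end{equation*}

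To close the argument, I would observe that on $\widetilde{M}$ the identity $\widetilde{\Delta}\widetilde{r} = (n-1)j'(\widetilde{r})/j(\widetilde{r})$ holds everywhere away from the pole, so rerunning the same chain-rule computation on $\widetilde{M}$ yields the bracketed expression above as a genuine \emph{equality} for $\widetilde{\Delta}_p\widetilde{u}(\widetilde{x})$ whenever $\widetilde{r}(\widetilde{x}) = r(x)$. Comparing the two statements produces the desired inequality. No serious obstacle is expected: the proposition is essentially a bookkeeping combination of the two preceding results, and the only delicate points are the sign tracking governed by $v'<0$ and the smoothness of $r$ on $M\setminus\{o\}$, which is automatic in the Cartan-Hadamard setting.
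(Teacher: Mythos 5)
Your proposal is correct and follows essentially the same route as the paper: apply \Cref{prop:comparison CH} to get $\Delta r \geq (n-1)j'(r)/j(r)$ for $r\gg 1$, feed this into the chain-rule computation of \Cref{lem:comparison p-harmonic functions} (where $v'<0$ flips the inequality the right way), and recognize the resulting right-hand side as exactly $\widetilde{\Delta}_p\widetilde{u}$ on the model, where the computation is an identity. Your write-up is, if anything, slightly more explicit about the sign bookkeeping than the paper's one-line argument.
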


In particular if we take $v(t) = G_p(t)$ as in \eqref{eq:p green function}, since $G_p$ defines the $p$-Green function on $(\widetilde{M}, \widetilde{g})$ we conclude that $G_p(x)$ is $p$-superharmonic on $(M, g)$ provided that $r(x) >> 1$.

\section{Hardy inequalities via Green function estimates}
\label{sec:Hardy and Rellich}

We now turn to the study of a class of functional inequalities on Riemannian manifolds which go under the name of Hardy or Rellich-type inequalities.
These inequalities have an interest of their own and are extensively studied in literature, especially in the case of Cartan-Hadamard manifolds. 
See \cite{BGGP2020, DD2014, DP2016, FLLM2021, KO2009, N2020, YSY2014} among others. 
With the help of a result by L. D'Ambrosio and S. Dipierro, \cite{DD2014}, we establish a new Hardy-type inequality on complete Riemannian manifolds possessing a non negative $p$-superharmonic function $G$.

\begin{theorem}
\label{thm:log hardy}
Let $(M,g)$ be a complete Riemannian manifold and $\Omega \subseteq M$ open. 
Fix $p > 1$ and let $G \in C^\infty(\Omega)$ such that
\begin{enumerate}[label=(\roman*)]
    \item $-\Delta_p G \geq 0$ on $\Omega$;
    \item $0 \leq G \leq c < 1$.
\end{enumerate}
Then, for any $\beta \geq 0$,
\begin{equation}
    \label{eq:log hardy}
   \int_\Omega \frac{|\nabla G |^p}{|G|^p} (-\log{G})^{\beta p} |f|^p dV_g \leq  \left(\frac{p}{p-1}\right)^p\int_\Omega (-\log{G})^{\beta p} |\nabla f |^p dV_g \qquad \forall f \in C^\infty_0(\Omega).
\end{equation}
\begin{proof}
Let $\delta > 0$ such that $G_\delta \coloneqq G + \delta < 1$ and define 
\begin{equation*}
    h \coloneqq -\frac{|\nabla G_\delta |^{p-2} \nabla G_\delta}{G_\delta^{p-1}}(-\log{G_\delta})^{\beta p}, \qquad A_h \coloneqq (p-1) \frac{|\nabla G_\delta |^{p}}{G_\delta^{p}}(-\log{G_\delta})^{\beta p}.
\end{equation*}
Since $G \in C^\infty(\Omega)$ and $G_\delta \geq \delta$ we have $|h|, A_h \in L^1_{\operatorname{loc}}(\Omega)$, furthermore, 
\begin{equation*}
     \frac{|h|^p}{A_h^{p-1}} = (p-1)^{1-p}(-\log{G_\delta})^{\beta p} \in L^1_{\operatorname{loc}}(\Omega).
\end{equation*}
Next, we estimate
\begin{align*}
    \Div(h) &= - \frac{(-\log{G_\delta})^{\beta p}}{G_\delta^{p-1}} \Delta_p G_\delta + (p-1)\frac{|\nabla G_\delta |^{p}}{G_\delta^{p}}(-\log{G_\delta})^{\beta p} + \beta p \frac{|\nabla G_\delta |^{p}}{G_\delta^{p}}(-\log{G_\delta})^{\beta p -1}\\
    &\ge (p-1)\frac{|\nabla G_\delta |^{p}}{G_\delta^{p}}(-\log{G_\delta})^{\beta p} = A_h. 
\end{align*}
Thanks to \cite[Lemma 2.10]{DD2014} we have 
\begin{equation*}
     \int_\Omega \frac{|\nabla G_\delta |^p}{|G_\delta|^p} (-\log{G_\delta})^{\beta p} |f|^p dV_g \leq \left(\frac{p}{p-1}\right)^p\int_\Omega (-\log{G_\delta})^{\beta p} |\nabla f |^p dV_g \qquad \forall f \in C^\infty_0(\Omega).
\end{equation*}
Since $-\log{G_\delta} \leq -\log{G}$ and $\nabla G_\delta = \nabla G$, letting $\delta \to 0$ and using Fatou's lemma yields \eqref{eq:log hardy}. 
\end{proof}
\end{theorem}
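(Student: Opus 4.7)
My plan is to produce this inequality via the abstract divergence-form machinery of D'Ambrosio--Dipierro \cite{DD2014}: if one can exhibit a vector field $h$ and a nonnegative function $A_h$ on $\Omega$ with $|h|, A_h, |h|^p/A_h^{p-1} \in L^1_{\loc}(\Omega)$ and $\Div h \geq A_h$ in the distributional sense, then their Lemma 2.10 (or an equivalent one-shot integration by parts against $|f|^p$ followed by Hölder) yields $\int_\Omega A_h |f|^p \le (p/(p-1))^p \int_\Omega |h|^p A_h^{1-p} |\nabla f|^p$. Thus everything reduces to constructing the right $h$ and $A_h$.

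Since the conclusion carries a weight $(-\log G)^{\beta p}$ and a factor $|\nabla G|^p/G^p$, the natural ansatz, suggested by the classical weightless case ($\beta=0$), is
\[
h := -\frac{|\nabla G|^{p-2}\nabla G}{G^{p-1}}(-\log G)^{\beta p}, \qquad A_h := (p-1)\frac{|\nabla G|^p}{G^p}(-\log G)^{\beta p}.
\]
To avoid the singularity where $G$ vanishes, I would first regularize by $G_\delta := G + \delta$ with $0<\delta<1-c$, so that $\delta \le G_\delta \le c+\delta < 1$ is uniformly bounded away from $0$ and from $1$. The conditions $|h|, A_h \in L^1_{\loc}$ then follow from smoothness of $G$ and $G_\delta \ge \delta$, and a direct calculation gives $|h|^p/A_h^{p-1} = (p-1)^{1-p}(-\log G_\delta)^{\beta p}$, again locally integrable.

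The heart of the argument is the divergence computation. Expanding by the product rule I would get three contributions: (i) a term proportional to $-\Delta_p G_\delta = -\Delta_p G$, which is $\ge 0$ by hypothesis; (ii) a term from differentiating $G_\delta^{-(p-1)}$, which is exactly $A_h$; and (iii) a term from differentiating $(-\log G_\delta)^{\beta p}$, equal to $\beta p |\nabla G_\delta|^p G_\delta^{-p}(-\log G_\delta)^{\beta p - 1}$. Because $\beta \ge 0$ and $0 < G_\delta < 1$, the logarithm is positive and this term is also nonnegative, so summing gives $\Div h \ge A_h$. Applying the D'Ambrosio--Dipierro lemma then produces the inequality with $G_\delta$ in place of $G$.

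Finally, I would pass to the limit $\delta \downarrow 0$. Since $\nabla G_\delta = \nabla G$ and $-\log G_\delta \nearrow -\log G$ (pointwise, possibly to $+\infty$ on $\{G=0\}$), the right-hand side is monotone in $\delta$ and its limit is handled by monotone convergence, while Fatou's lemma on the left gives the claimed bound. The step I expect to be most delicate is the divergence computation, specifically verifying that the log-derivative term (iii) carries the favorable sign: this is exactly where the two-sided constraint $0 \le G \le c < 1$ is used, since $G \le c < 1$ ensures $-\log G_\delta > 0$ and thus that this boundary-like contribution cannot ruin the divergence inequality.
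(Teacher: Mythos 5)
Your proposal is correct and follows essentially the same route as the paper: the same regularization $G_\delta = G+\delta$, the identical choice of $h$ and $A_h$, the same three-term divergence computation with the sign of the logarithmic term secured by $G_\delta<1$, the appeal to the D'Ambrosio--Dipierro lemma, and the limit $\delta\downarrow 0$ via Fatou (the paper uses the monotonicity $-\log G_\delta\le -\log G$ to control the right-hand side, which is equivalent to your monotone-convergence remark). No gaps.
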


\begin{remark}
It is worth noticing that the results of \Cref{thm:log hardy} still hold even under more relaxed regularity assumptions. 
Notably, it suffices to have $G \in W^{1, p}_{\operatorname{loc}}(\Omega)$ and $-\Delta_p G \geq 0$ weakly on $\Omega$ to have the validity of \eqref{eq:log hardy}. 
Assumption \textit{(ii)} still needs to hold although it is always satisfied in applications.  
\end{remark}

Once we have the quite general \eqref{eq:log hardy}, we return to our setting, that is, $(M,g)$ is a Cartan-Hadamard manifold satisfying the Ricci upper bound  \eqref{eq:upper bound ric}. 
Under such curvature assumptions one easily gets that $(M, g)$ is a  $p$-hyperbolic manifold, i.e., there exists a symmetric positive Green kernel for the $p$-Laplacian.
Namely, if $\mathcal{G}_p(x)$ is the $p$-Green function with pole $o \in M$, it satisfies $\Delta_p \mathcal{G}_p(x) = 0$ for all $x \neq o$ and, thus, can be used as weight in \Cref{thm:log hardy}. 
Our interest is then to look for asymptotic estimates for the $p$-Green function of $(M,g)$ and its gradient so to better control growth at infinity of the weights in \eqref{eq:log hardy}. 
One possibility is to use Li-Yau type estimates which are ensured under several lower bounds on Ricci. 
These, however, are not sufficient because it provides only an upper bound on $\nabla \log{\mathcal{G}_p}$.

Thus, instead of using the $p$-Green function of $(M, g)$ directly, we use the $p$-Green function of the model manifold $(\widetilde{M}, \widetilde{g})$ constructed in \Cref{subs:model manifold} which is $p$-superharmonic outside a large enough compact set and whose estimates are already available.

Notice also that $G_p(x) \to 0$ as $r(x) \to + \infty$, hence, $G_p(x)$ distant form 1 provided that $r(x)>>1$ 
In other words, $G_p(x)$ is a suitable weight in \Cref{thm:log hardy} as long as $r(x)>>1$.

\begin{proposition}
\label{thm:hardy with p-green}
Let $(M, g)$ be a Cartan-Hadamard manifold satisfying \eqref{eq:upper bound ric}. 

For $p > 1$ and $\beta \geq 0$ there exists a compact $K$ containing the pole such that 
\begin{equation}
    \label{eq:hardy with p-green}
    \int_\Omega \frac{|\nabla G_p|^p}{|G_p|^p} (-\log{G_p})^{\beta p} |f|^p dV_g \leq \left(\frac{p}{p-1}\right)^p \int_\Omega (-\log{G_p})^{\beta p} |\nabla f |^p dV_g, 
\end{equation}
for all $f \in C^\infty_0(\Omega)$
where $\Omega = M \setminus K$.
\end{proposition}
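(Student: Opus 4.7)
The plan is to apply \Cref{thm:log hardy} with the weight $G := G_p(r(\cdot))$, where $G_p$ is the radial $p$-Green function of the model $(\widetilde M,\widetilde g)$ constructed in \Cref{subs:model manifold} and $r$ is the distance from the pole $o\in M$. Since $M$ is Cartan--Hadamard, $r\in C^\infty(M\setminus\{o\})$, and $G_p\in C^\infty((0,+\infty))$ with $\partial_t G_p<0$, so $G\in C^\infty(M\setminus\{o\})$. It therefore suffices to choose a compact $K\ni o$ and verify the two hypotheses \textit{(i)} and \textit{(ii)} of \Cref{thm:log hardy} on $\Omega=M\setminus K$.

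For \textit{(ii)}, the asymptotic \eqref{eq:asymptotic G} yields $G_p(t)\to 0$ as $t\to+\infty$, while \eqref{eq:p green function} gives $G_p>0$; combined with the fact that $G_p$ is monotone decreasing, there exists $R_1>0$ such that $0<G_p(r)\le 1/2$ for every $r\ge R_1$. For \textit{(i)}, by construction $G_p$ is the positive $p$-Green function of $\widetilde M$, hence $p$-harmonic on $\widetilde M\setminus\{o\}$, so $\widetilde{\Delta}_p G_p(\widetilde x)=0$ for $\widetilde r(\widetilde x)>0$. Invoking \Cref{prop:comparison p harmonic CH} with $v=G_p$ (nonnegative with $v'<0$) then produces $R_2>0$ such that $\Delta_p G(x)\le \widetilde{\Delta}_p G_p(\widetilde x)=0$ for every $x$ with $r(x)\ge R_2$. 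Setting $K=\overline{B}_R$ with $R=\max\{R_0,R_1,R_2\}$ and $\Omega=M\setminus K$, both hypotheses of \Cref{thm:log hardy} hold on $\Omega$, and \eqref{eq:hardy with p-green} follows directly from \eqref{eq:log hardy}.

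The main conceptual point, rather than a real obstacle, is \emph{why} one should use the radial model Green function instead of the $p$-Green function of $M$ itself. A direct use of the latter would require sharp two-sided estimates on its gradient, which are not available in this curvature regime (as the authors also observe before stating the proposition). Transporting the \emph{radial} model Green function to $M$ circumvents this: the radial monotonicity $\partial_t G_p<0$ is exactly what allows the Laplacian comparison of \Cref{prop:comparison CH} to be upgraded to the $p$-Laplacian comparison of \Cref{prop:comparison p harmonic CH}, yielding $p$-superharmonicity of $G$ outside a large compact set, while the explicit asymptotic \eqref{eq:asymptotic G} gives condition \textit{(ii)} for free.
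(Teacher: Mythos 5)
Your proof is correct and follows exactly the route the paper intends (the paper leaves this proposition's proof implicit, having already noted at the end of Section 2 that $G_p$ is $p$-superharmonic on $M$ for $r\gg 1$ and, before the proposition, that $G_p\to 0$ at infinity so it is bounded away from $1$ there): apply \Cref{thm:log hardy} on $\Omega=M\setminus K$ with $G=G_p(r(\cdot))$, verifying (i) via \Cref{prop:comparison p harmonic CH} and (ii) via \eqref{eq:asymptotic G}. Your closing discussion of why the model's Green function is used instead of the intrinsic one also matches the paper's own motivation.
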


Using estimates \eqref{eq:asymptotic G'} and \eqref{eq:asymptotic G} we deduce 
\begin{equation}
    \label{eq:asymptotic gradG/G}
    \frac{|\nabla G_p|}{|G_p|}(r(x)) \sim D_5 r(x)^{\frac{\alpha}{2}},
\end{equation}
\begin{equation}
    \label{eq:asymptotic log G}
    (-\log{G_p(r(x))}) \sim D_6 r(x)^{1+\frac{\alpha}{2}}
\end{equation}
so that 
\begin{equation}
    \label{eq:estimate log}
    |\log{G_p}|^\beta  = \mathcal{O}(|\nabla \log{G_p}|)
\end{equation}
provided that $\beta \leq \frac{\alpha}{2+\alpha}$. 

Note that \Cref{thm:hardy with p-green} requires $f$ to be smooth and compactly supported in $\Omega$. 
Both assumptions, however, can be weakened as long as the support of $f$ is far away from the pole $o$. 

\begin{theorem}
\label{thm:hardy with p-green sobolev}
Let $(M, g)$ be a Cartan-Hadamard manifold satisfying \eqref{eq:upper bound ric}. 
For $p > 1$ and $0\leq \beta \leq \frac{\alpha}{\alpha + 2}$ there exists a compact $K$ containing the pole such that 
\begin{equation}
    \label{eq:hardy with p-green sobolev}
    \int_M \frac{|\nabla G_p|^p}{|G_p|^p} (-\log{G_p})^{\beta p} |f|^p dV_g \leq \left(\frac{p}{p-1}\right)^p \int_M (-\log{G_p})^{\beta p} |\nabla f |^p dV_g, 
\end{equation}
for all $f \in W^{1, p}(M)$ with $\supp(f) \cap K = \emptyset$.
\begin{proof}
We proceed by steps, gradually weakening the assumptions on $f$. 
\begin{enumerate}[label=\textbf{Step \arabic{enumi}}, wide=0pt]
    \item We begin by considering $f \in W^{1, p}(M)$ compactly supported in $\Omega = M \setminus K$ so that $f \in W^{1, p}_0(\Omega)$, i.e., there exists $u_n \in C^\infty_0(\Omega)$ such that $u_n \to f$ in $W^{1, p}$ norm.
    Note that $\supp(u_n)$ and $\supp(f)$ are all contained in a compact $\Omega' \subset \Omega$. 
    Then, by \eqref{eq:hardy with p-green} we have
    \begin{equation}
        \label{eq:hardy with p-green approximation}
        \int_M \frac{|\nabla G_p|^p}{|G_p|^p} (-\log{G_p})^{\beta p} |u_n|^p dV_g \leq \left(\frac{p}{p-1}\right)^p \int_M (-\log{G_p})^{\beta p} |\nabla u_n |^p dV_g. 
    \end{equation}
    Note that
    \begin{equation*}
        \left\vert \int_M (-\log{G_p})^{\beta p} (|\nabla u_n |^p - |\nabla f |^p) dV_g \right\vert \leq \sup_{\Omega'}(-\log{G_p})^{\beta p} \int_M \vert|\nabla u_n |^p - |\nabla f |^p\vert dV_g,
    \end{equation*}
    so that
    \begin{equation*}
      \int_M (-\log{G_p})^{\beta p} |\nabla u_n |^p dV_g \to \int_M (-\log{G_p})^{\beta p} |\nabla f |^p dV_g. 
    \end{equation*}
    Similarly
    \begin{equation*}
        \int_M \frac{|\nabla G_p|^p}{|G_p|^p} (-\log{G_p})^{\beta p} |u_n|^p dV_g \to \int_M \frac{|\nabla G_p|^p}{|G_p|^p} (-\log{G_p})^{\beta p} |f|^p dV_g.  
    \end{equation*}
    Hence, passing to the limit in \eqref{eq:hardy with p-green approximation} we obtain the validity of \eqref{eq:hardy with p-green sobolev} for all $f \in W^{1, p}(M)$ compactly supported in $\Omega$.
    \item Next, let $f \in W^{1,p}(M)$ such that $\supp(f) \cap K = \emptyset$ and consider a family of cutoffs $\chi_R \in C^\infty(M)$ such that $\chi \equiv 1$ on $B_R$, $\chi_R \equiv 0$ outside $B_{2R}$ and $|\nabla \chi_R| \leq C$ uniformly on $R$.
    Such a family exists on any complete Riemannian manifold, see \cite{Ga1959}. 
    Consider $f\chi_R \in W^{1, p}(M)$, clearly $\supp(f\chi_R) \subseteq M\setminus K$ is compact. 
    Then, by Step 1 (with $\beta = 0$) we have
    \begin{align*}
        \begin{split}
        \int_M \frac{|\nabla G_p|^p}{|G_p|^p} |f|^p|\chi_R|^p dV_g &\leq \left(\frac{p}{p-1}\right)^p 2^{p-1} \left(\int_M |f|^p|\nabla \chi_R|^p dV_g + \int_M |\nabla f|^p|\chi_R|^p dV_g \right)\\
        &\leq \left(\frac{p}{p-1}\right)^p2^{p-1} \left(\int_M |\nabla f|^p dV_g + \int_{B_{2R}\setminus B_R}|f|^p dV_g\right).
        \end{split}
    \end{align*}
    Note that the LHS converges to $\int_M |\nabla \log G_p|^p |f|^p dV_g$ by monotone convergence, on the other hand $\int_{B_{2R}\setminus B_R}|f|^p dV_g \to 0$ since $f \in L^p(M)$. 
    We conclude that
    \begin{equation}
        \label{eq:hardy step 2}
        \int_M |\nabla \log G_p|^p |f|^p dV_g \leq \left(\frac{p}{p-1}\right)^p2^{p-1} \int_M |\nabla f |^p dV_g
    \end{equation}
    for all $f \in W^{1, p}(M)$ with $\supp \cap K = \emptyset$.
    \item Using Step 2, we now prove the more general \eqref{eq:hardy with p-green sobolev} under the assumptions that $f \in W^{1, p}(M)$ and $\supp(f) \cap K = \emptyset$. 
    Indeed, let $\chi_R \in C^\infty(M)$ be as in Step 2 so that $f\chi_R$ is compactly supported in $M \setminus K$, by Step 1 we have
    \begin{align*}
        \begin{split}
        \int_M |\nabla \log G_p|^p (-\log G_p)^{\beta p} |f|^p|\chi_R|^p dV_g \leq& \left(\frac{p}{p-1}\right)^p2^{p-1}\left(\int_M (-\log G_p)^{\beta p}|f|^p|\nabla \chi_R|^p dV_g \right.\\
        &+\left. \int_M (-\log G_p)^{\beta p}|\nabla f|^p|\chi_R|^p dV_g\right). 
        \end{split}
    \end{align*}
    Here, we reason as in Step 2. 
    The only difference is the following estimate which is a consequence of~\eqref{eq:estimate log} and \eqref{eq:hardy step 2}:
    \begin{equation*}
            \int_M (-\log G_p)^{\beta p}|f|^p dV_g \leq C\int_M |\nabla\log G_p|^p |f|^p dV_g \leq C \left(\frac{p}{p-1}\right)^p2^{p-1}\int_M |\nabla f|^p dV_g
    \end{equation*}
    where $C>0$.
    Since $|\nabla f | \in L^p(M)$ we are still able to conclude that $\int_{B_{2R}\setminus B_R} (-\log G_p)^{\beta p}|f|^p dV_g \to 0$ as $R \to +\infty$. 
\end{enumerate}
\end{proof}
\end{theorem}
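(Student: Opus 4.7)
The plan is to upgrade Proposition \ref{thm:hardy with p-green} from test functions $f\in C^\infty_0(\Omega)$, $\Omega=M\setminus K$, to arbitrary $W^{1,p}(M)$ functions whose support avoids the compact set $K$. I would proceed in three stages, weakening the hypothesis on $f$ at each stage, and the decisive ingredient I would exploit throughout is the asymptotic relation \eqref{eq:estimate log}, namely $(-\log G_p)^{\beta}=\mathcal O(|\nabla\log G_p|)$ as $r\to\infty$ whenever $\beta\le\alpha/(\alpha+2)$. This is precisely what will allow the weighted quantity to be absorbed by the ``flat'' Hardy inequality eventually.

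First I would fix $f\in W^{1,p}(M)$ with $\supp f$ compact in $\Omega=M\setminus K$; then $f\in W^{1,p}_0(\Omega)$, so by Meyers--Serrin there exist $u_n\in C^\infty_0(\Omega)$ with $u_n\to f$ in $W^{1,p}$, and one may assume all supports lie inside a single compact $\Omega'\subset\Omega$. On such an $\Omega'$, both $(-\log G_p)^{\beta p}$ and $|\nabla G_p|^p/|G_p|^p$ are bounded, so passing to the limit in the inequality from Proposition \ref{thm:hardy with p-green} applied to $u_n$ is routine: the RHS converges by boundedness of the weight on $\Omega'$ together with $L^p$-convergence of $\nabla u_n$, and the LHS likewise.

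Next I would treat the unweighted case $\beta=0$ for arbitrary $f\in W^{1,p}(M)$ with $\supp f\cap K=\emptyset$. Pick a family $\{\chi_R\}\subset C^\infty(M)$ with $\chi_R\equiv 1$ on $B_R$, $\chi_R\equiv 0$ outside $B_{2R}$ and $|\nabla\chi_R|\le C$ uniformly in $R$ (such a family exists on any complete manifold). Apply Stage~1 to $f\chi_R$, whose support is compact in $M\setminus K$; using the elementary inequality $|a+b|^p\le 2^{p-1}(|a|^p+|b|^p)$ one bounds the RHS by a universal multiple of $\int_M|\nabla f|^p dV_g+\int_{B_{2R}\setminus B_R}|f|^p dV_g$. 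The second term vanishes as $R\to\infty$ since $f\in L^p(M)$, and the LHS converges to $\int_M|\nabla\log G_p|^p|f|^p\,dV_g$ by monotone convergence, yielding a version of \eqref{eq:hardy with p-green sobolev} with $\beta=0$ and a (larger, but finite) constant.

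Finally, for general $0\le\beta\le\alpha/(\alpha+2)$, I would repeat the cutoff step but now with the weight $(-\log G_p)^{\beta p}$ in place. The only genuinely new issue is proving that the error term
\[
\int_{B_{2R}\setminus B_R}(-\log G_p)^{\beta p}|f|^p\,dV_g \longrightarrow 0 \qquad\text{as }R\to\infty,
\]
since the weight is no longer bounded. This is where the key estimate \eqref{eq:estimate log} enters: it says $(-\log G_p)^{\beta p}\le C\,|\nabla\log G_p|^p$ outside a compact set, so the unweighted Hardy inequality from Stage~2 gives the global finiteness $\int_M(-\log G_p)^{\beta p}|f|^p dV_g<\infty$, and absolute continuity of the integral then forces the annular integrals to vanish. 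The main obstacle, in my view, is precisely this compatibility between the growth of the logarithmic weight and the gradient weight — one must choose the range $\beta\le\alpha/(\alpha+2)$ exactly so that \eqref{eq:estimate log} holds. Once that asymptotic is in hand, the rest is a controlled density-plus-cutoff argument.
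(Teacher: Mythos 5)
Your proposal follows essentially the same three-step scheme as the paper's proof: density of $C^\infty_0(\Omega)$ in $W^{1,p}_0(\Omega)$ with bounded weights on a fixed compact, then Lipschitz cutoffs $\chi_R$ for the unweighted ($\beta=0$) case, then the same cutoff argument for general $\beta$ with the annular error controlled via \eqref{eq:estimate log} combined with the $\beta=0$ inequality. The only caveat, which the paper's own proof shares (see the $2^{p-1}$ in \eqref{eq:hardy step 2}), is that the cutoff step degrades the constant beyond the stated $\left(\frac{p}{p-1}\right)^p$; your explicit acknowledgement of a ``larger, but finite'' constant is honest and does not affect any of the applications.
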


If we require $f \in W^{2, p}(M)$ and apply \eqref{eq:hardy with p-green sobolev} twice, we obtain the following second order Hardy-type inequality.

\begin{theorem}
\label{thm:rellich with p-green sobolev}
Let $(M, g)$ be a Cartan-Hadamard manifold satisfying \eqref{eq:upper bound ric}. 
For $p > 1$ and $0 \leq \beta \leq \frac{\alpha}{2+\alpha}$ there exists a compact $K$ containing the pole such that 
\begin{equation}
    \label{eq:rellich with p-green sobolev}
    \int_M \frac{|\nabla G_p |^p}{|G_p|^p} (-\log{G_p})^{\beta p} |f|^p dV_g \leq C\int_M |\nabla^2 f |^p dV_g,
\end{equation}
for all $f \in W^{2, p}(M)$ such that $\supp(f) \cap K = \emptyset$, where and $C = C(p, K) > 0$.
\begin{proof}
Using \Cref{thm:hardy with p-green sobolev} and \eqref{eq:estimate log} we have
\begin{equation*}
    \int_M \frac{|\nabla G_p|^p}{|G_p|^p} (-\log{G_p})^{\beta p} |f|^p dV_g \leq C \int_M |\nabla \log G_p |^p |\nabla f |^p dV_g.
\end{equation*}
Since $|\nabla f | \in W^{1, p}(M)$ with $\supp(|\nabla f |) \cap K = \emptyset$ we apply \eqref{eq:hardy with p-green sobolev} with $\beta = 0$ to $|\nabla f |$ and conclude using Kato's inequality $|\nabla|\nabla f||\le |\nabla^2 f|$.
\end{proof}
\end{theorem}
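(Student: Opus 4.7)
The strategy is a two-fold application of Theorem \ref{thm:hardy with p-green sobolev}, bridged by Kato's inequality. The key observation is the asymptotic relation \eqref{eq:estimate log}, namely that $(-\log G_p)^{\beta} = \mathcal{O}(|\nabla G_p|/|G_p|)$ as $r\to+\infty$ precisely when $\beta \leq \alpha/(\alpha+2)$. This allows one to trade a logarithmic weight for a gradient weight, which is exactly what makes the iteration go through.

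First, I would apply Theorem \ref{thm:hardy with p-green sobolev} directly to $f$, which is legitimate since $f\in W^{2,p}(M)\subset W^{1,p}(M)$ and its support avoids $K$:
\begin{equation*}
\int_M \frac{|\nabla G_p|^p}{|G_p|^p}(-\log G_p)^{\beta p}|f|^p \, dV_g \leq \left(\frac{p}{p-1}\right)^p \int_M (-\log G_p)^{\beta p}|\nabla f|^p \, dV_g.
\end{equation*}
After possibly enlarging $K$ so that the asymptotic bound \eqref{eq:estimate log} holds pointwise on $M\setminus K$, I can dominate the right-hand side by $C \int_M |\nabla \log G_p|^p |\nabla f|^p \, dV_g$ for some constant $C = C(p,K) > 0$.

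The next step is to iterate. Since $f\in W^{2,p}(M)$, Kato's inequality $|\nabla|\nabla f||\le|\nabla^2 f|$ ensures $|\nabla f|\in W^{1,p}(M)$, and $\operatorname{supp}(|\nabla f|)\cap K = \emptyset$. I now apply Theorem \ref{thm:hardy with p-green sobolev} again, this time with exponent parameter $\beta = 0$ and with the function $|\nabla f|$ in place of $f$, to obtain
\begin{equation*}
\int_M |\nabla\log G_p|^p |\nabla f|^p \, dV_g \leq \left(\frac{p}{p-1}\right)^p \int_M |\nabla|\nabla f||^p \, dV_g \leq \left(\frac{p}{p-1}\right)^p \int_M |\nabla^2 f|^p \, dV_g.
\end{equation*}
Combining the two chains of inequalities yields \eqref{eq:rellich with p-green sobolev}. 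There is essentially no hard obstacle beyond the first-order Hardy inequality already established: the only bookkeeping is to take the union of the exceptional compact sets produced by the two invocations of Theorem \ref{thm:hardy with p-green sobolev} and by the region where \eqref{eq:estimate log} becomes effective, so that a single $K$ works throughout.
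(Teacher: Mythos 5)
Your proposal is correct and follows essentially the same route as the paper: apply the first-order Hardy inequality of Theorem \ref{thm:hardy with p-green sobolev} to $f$, use \eqref{eq:estimate log} to convert the logarithmic weight into $|\nabla\log G_p|^p$, then apply the same inequality with $\beta=0$ to $|\nabla f|\in W^{1,p}(M)$ and finish with Kato's inequality. The bookkeeping remark about taking a single compact $K$ that works for all invocations is a sensible (implicit in the paper) addition.
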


Note that inequality \eqref{eq:rellich with p-green sobolev} is more of a second-order Hardy-type inequality rather then a proper Rellich inequality. 
The reason being that the RHS is estimated with the $L^p$-norm of the Hessian rather than the Laplacian of $f$. 
The optimal value for $\beta$ in \eqref{eq:rellich with p-green sobolev} is $\beta = \frac{\alpha}{2+\alpha}$, in this case we have:
\begin{equation*}
    \frac{|\nabla G_p|}{|G_p|}(-\log{G_p})^{\frac{\alpha}{2+\alpha}} \sim D_7 r(x)^\alpha
\end{equation*}
which is the fastest growth we are able to control via \eqref{eq:rellich with p-green sobolev}.
Finally we observe that no assumption on the support of $f$ is needed as long as the weight has support distant from the pole. This is the kind of control needed for applications. 

\begin{theorem}
\label{thm:rellich with weight sobolev}
Let $(M, g)$ be a Cartan-Hadamard manifold satisfying \eqref{eq:upper bound ric}. 
For $p > 1$ and $K$ as in \Cref{thm:rellich with p-green sobolev}, let $\omega \geq 0$ be a measurable function such that $\supp (\omega) \cap K = \emptyset$  and $\omega(x) = \mathcal{O}(r^\alpha(x))$ on $M$, then $W^{2, p}(M)\hookrightarrow L^p(M, \omega^p dV_g)$.
\begin{proof}
In order to extend the support of $f$, we need to remove the possible problems around the pole. 
To do so, let $K'$ a compact set such that $K \subseteq K' \subseteq M \setminus \supp(\omega)$, let $\varphi \in C^\infty(M)$ be a cutoff function such that $\varphi \equiv 0$ on $K$ and $\varphi \equiv 1$ outside of $K'$. 
Note that $|\nabla \varphi|$ and $|\nabla^2\varphi|$ are uniformly bounded and that $f\varphi \in W^{2, p}(M)$ with $\supp(f\varphi)\cap K = \emptyset$, then by \Cref{thm:rellich with p-green sobolev} we have
\begin{align*}
    \int_{M} \omega^p |f|^p dV_g &= \int_{\Omega} \omega^p |f\varphi|^p dV_g \leq C' \int_{M} \frac{|\nabla G_p |^p}{|G_p|^p} (-\log{G_p})^{\frac{\alpha}{2+\alpha} p} |\varphi f|^p dV_g \\
   &\leq C\int_\Omega |\nabla^2 (\varphi f) |^p dV_g \\
   &\leq C\int_\Omega |\nabla^2 f|^p dV_g + C\int_\Omega |\nabla \varphi|^p|\nabla f |^p dV_g + C\int_\Omega |\nabla^2 \varphi|^p |f|^p  dV_g \\
   & \leq C \Vert f \Vert^p_{W^{2, p}(M)}.
\end{align*}
\end{proof}
\end{theorem}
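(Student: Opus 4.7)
The strategy is to reduce this weighted embedding to the second-order Hardy-type inequality of Theorem \ref{thm:rellich with p-green sobolev}, which applies only to functions whose support is disjoint from $K$. The quantitative input making this reduction work is the asymptotic noted just before the statement,
\[
\frac{|\nabla G_p|}{|G_p|}(-\log G_p)^{\frac{\alpha}{2+\alpha}} \sim D_7\, r^\alpha,
\]
which shows that any weight $\omega = \mathcal{O}(r^\alpha)$ supported away from $K$ is pointwise dominated, up to a multiplicative constant, by the Green-function weight appearing on the left-hand side of \eqref{eq:rellich with p-green sobolev}.

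First I would enlarge $K$ to an intermediate compact set $K'$ with $K \subseteq K' \subseteq M \setminus \supp(\omega)$. This is possible because $\supp(\omega)$ is closed and disjoint from the compact set $K$, hence separated from it by a positive distance; by further enlarging if necessary, we may also assume $K'$ contains a ball large enough that the pointwise bound $\omega \le C\,\frac{|\nabla G_p|}{|G_p|}(-\log G_p)^{\alpha/(2+\alpha)}$ holds on $M\setminus K'$ (this is precisely the asymptotic regime where the equivalence with $r^\alpha$ kicks in). Next I would pick a smooth cutoff $\varphi \in C^\infty(M)$ with $\varphi \equiv 0$ on $K$, $\varphi \equiv 1$ on $M\setminus K'$, and with $|\nabla \varphi|$, $|\nabla^2 \varphi|$ uniformly bounded; since $\varphi$ only needs to interpolate on the compact collar $K'\setminus K$, its existence with bounded derivatives up to second order is elementary.

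For $f \in W^{2,p}(M)$, the function $\varphi f$ lies in $W^{2,p}(M)$ and satisfies $\supp(\varphi f)\cap K = \emptyset$, so Theorem \ref{thm:rellich with p-green sobolev} with $\beta = \alpha/(2+\alpha)$ applies to it. Since $\omega$ vanishes on $K'$, on the support of $\omega$ we have $\varphi\equiv 1$, so $\omega^p|f|^p = \omega^p |\varphi f|^p$ everywhere; combining this with the weight comparison yields
\[
\int_M \omega^p|f|^p\, dV_g \;\le\; C\!\int_M \frac{|\nabla G_p|^p}{|G_p|^p}(-\log G_p)^{\frac{\alpha p}{2+\alpha}}|\varphi f|^p\, dV_g \;\le\; C'\!\int_M |\nabla^2(\varphi f)|^p\, dV_g.
\]
Finally, expanding $\nabla^2(\varphi f)$ by the Leibniz rule produces three contributions, involving $|\nabla^2 f|$, $|\nabla\varphi|\,|\nabla f|$, and $|\nabla^2\varphi|\,|f|$ respectively, each controlled by $\|f\|_{W^{2,p}(M)}^p$ thanks to the uniform bounds on the derivatives of $\varphi$.

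The main obstacle I anticipate is really just bookkeeping: ensuring that the intermediate compact $K'$ can simultaneously serve as a buffer around $K$ (so the cutoff $\varphi$ exists with bounded Hessian), as a subset of $M\setminus\supp(\omega)$ (so $\omega^p|f|^p = \omega^p|\varphi f|^p$), and as large enough for the asymptotic $\sim D_7 r^\alpha$ to yield a genuine pointwise inequality on $M\setminus K'$. All three conditions are compatible since $\supp(\omega)$ is closed and disjoint from $K$, so no conceptual difficulty arises once one unwinds the asymptotic estimates for $G_p$ already established in Section \ref{sec:CH estimates}.
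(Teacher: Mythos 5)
Your proof is correct and follows essentially the same route as the paper: introduce an intermediate compact $K'$ and a cutoff $\varphi$ vanishing on $K$ and equal to $1$ outside $K'$, note $\omega^p|f|^p=\omega^p|\varphi f|^p$, dominate $\omega=\mathcal{O}(r^\alpha)$ by the Green-function weight via the asymptotic $\frac{|\nabla G_p|}{|G_p|}(-\log G_p)^{\alpha/(2+\alpha)}\sim D_7 r^\alpha$, apply Theorem~\ref{thm:rellich with p-green sobolev} to $\varphi f$, and expand $\nabla^2(\varphi f)$ by Leibniz. The bookkeeping concern you raise about enlarging $K'$ is moot, since the pointwise comparison already holds on all of $M\setminus K$ once $K$ is chosen as in Theorem~\ref{thm:rellich with p-green sobolev}, so no further enlargement is needed.
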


As a direct consequence, if we have a family of weights $\{\omega_R\}$ whose growth is suitably controlled and whose supports vanish at $+\infty$ then $\Vert \omega_R f \Vert_{L^p} \to 0$.
\begin{corollary}
\label{cor:rellich with weight hessian cutoffs}
Let $p > 1$ and $(M, g)$ as in \Cref{thm:rellich with weight sobolev}. 
Let $f \in W^{2,p}(M)$ and $\{\omega_R\}\subseteq C^\infty(M)$ non-negative such that $\supp(\omega_R)\subseteq M \setminus \overline{B}_R$ with $R>>1$ and $\omega_R(x) \leq C r^\alpha(x)$, then
\begin{equation*}
    \label{eq:rellich with hessian goes to 0}
    \int_{M} \omega_R^p |f|^p dV_g \to 0
\end{equation*}
as $R\to +\infty$. 
\end{corollary}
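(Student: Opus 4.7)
The plan is to apply \Cref{thm:rellich with weight sobolev} once with a fixed ``master'' weight that uniformly dominates the entire family $\{\omega_R\}$, and then invoke the dominated convergence theorem. There should be no serious obstacle, since essentially all the analytic work has already been done in the preceding theorem.

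First, I would fix a radius $R_* > 0$ large enough that the compact set $K$ from \Cref{thm:rellich with weight sobolev} is contained in $B_{R_*}$, and define
\begin{equation*}
\omega_\infty(x) \coloneqq C\, r^\alpha(x)\, \chi_{M \setminus \overline{B}_{R_*}}(x),
\end{equation*}
where $C$ is the constant provided by the hypothesis $\omega_R(x) \le C r^\alpha(x)$. Then $\omega_\infty$ is a non-negative measurable function with $\supp(\omega_\infty) \subseteq M \setminus \overline{B}_{R_*} \subseteq M \setminus K$ and $\omega_\infty(x) = \mathcal{O}(r^\alpha(x))$, so \Cref{thm:rellich with weight sobolev} applies and yields $f \in L^p(M, \omega_\infty^p dV_g)$, that is, $\int_M \omega_\infty^p |f|^p\, dV_g \le C'\Vert f\Vert_{W^{2,p}}^p < +\infty$.

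Next, I would observe that for every $R \ge R_*$ the support condition $\supp(\omega_R) \subseteq M \setminus \overline{B}_R \subseteq M \setminus \overline{B}_{R_*}$ together with the pointwise bound $\omega_R \le C r^\alpha$ gives
\begin{equation*}
\omega_R(x)^p |f(x)|^p \le \omega_\infty(x)^p |f(x)|^p \qquad \text{for a.e.\ } x \in M,
\end{equation*}
uniformly in $R$. On the other hand, for every fixed $x \in M$ one has $\omega_R(x) = 0$ as soon as $R > r(x)$, so $\omega_R^p |f|^p \to 0$ pointwise almost everywhere as $R \to +\infty$.

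The conclusion then follows by the Lebesgue dominated convergence theorem, applied with dominating function $\omega_\infty^p |f|^p \in L^1(M)$. The only (minor) point to check carefully is that the constant $C$ in the domination $\omega_R \le C r^\alpha$ is independent of $R$ — which is exactly what the statement assumes — so that a single integrable majorant works for the whole family.
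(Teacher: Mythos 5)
Your argument is correct and is precisely the "direct consequence" the paper has in mind: dominate the whole family by a single weight $\omega_\infty = C r^\alpha \chi_{M\setminus \overline{B}_{R_*}}$ to which \Cref{thm:rellich with weight sobolev} applies, then conclude by dominated convergence (equivalently, by the vanishing of the tails $\int_{M\setminus \overline{B}_R}\omega_\infty^p|f|^p\,dV_g$ of an integrable function). The only cosmetic point is to choose $R_*$ strictly large enough that the closed support of $\omega_\infty$ is still disjoint from $K$.
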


\begin{remark}
\label{rmk:Hardy for L^p pp}
Note that if we assume lower regularity in $f$, namely, $f\in W^{1, p}(M)$ we are still able to control $\Vert \omega f \Vert_{L^p(M)}$ as long as $\omega(x) \leq C r^{\frac{\alpha}{2}}(x)$. 
The strategy here is the same of \Cref{thm:rellich with weight sobolev} but instead of the second order Hardy \eqref{eq:rellich with p-green sobolev} we use the Hardy-type inequality \eqref{eq:hardy with p-green sobolev} with $\beta = 0$.
Similarly, if we take a family of weights $\{\omega_R\}$ such that $\omega_R(x) \leq C r^{\frac{\alpha}{2}}(x)$ and $\supp(\omega_R)\subseteq M \setminus \overline{B}_R$, we are still able to conclude that $\Vert \omega_R f \Vert_{L^p} \to 0$.
\end{remark}

\section{Density in \texorpdfstring{$W^{2,p}$}{Sobolev Spaces}}
\label{sec:density}
In the following section, we apply the estimates developed in \Cref{sec:Hardy and Rellich} to the density problem of smooth and compactly supported functions in the Sobolev space $W^{2, p}(M)$.
To this aim, we construct via the Riemannian distance a family of smooth cutoff functions $\{\chi_R\}$ which we control up to the second covariant derivative.
On arbitrary Riemannian manifolds there are two obstacles to this construction: the Riemannian distance might fail to be smooth on $M\setminus \{o\}$ and, while $|\nabla r|$ is always bounded, $|\nabla^2 r|$ might grow uncontrollably.
In the case of Cartan-Hadamard manifolds, however, both difficulties can be overcome.
Indeed, the cut locus of $M$ is empty which implies smoothness of the Riemannian distance. Furthermore, a lower bound on the radial Ricci curvature allows to control the Hilbert-Schmidt norm of $\nabla^2 r$. 

\begin{lemma}
\label{lem:upper bound hessian distance}
Let $(M, g)$ be a Cartan-Hadamard manifold satisfying
\begin{equation}
    \label{eq:lower bound ricci}
    \Ric_o(x) \geq -(n-1)B^2 r^\beta(x) \quad \forall x \in M\setminus B_{R_0}, 
\end{equation}
for some $B, R_0 > 0$ and $\beta \geq 0$. 
Then, there exist $R_1 > R_0$ and $C > 0$ such that 
\begin{equation}
    \label{eq:hessian distance function}
    |\nabla^2 r|(x) \leq C r^{\frac{\beta}{2}}(x) \quad  \forall x \in M \setminus B_{R_1}.
\end{equation}

\begin{proof}
By the Hessian comparison theorem (\cite[Theorem 2.3]{PRS2008}), the Hessian of the Riemannian distance, $\nabla^2 r$, has non negative eigenvalues at every point in $M\setminus B_{R_0}$ and in particular
\begin{equation*}
    |\nabla^2 r| \leq \Delta r 
\end{equation*}
on $M\setminus B_{R_0}$. 
Then, by Laplacian comparison we conclude that 
\begin{equation*}
    |\nabla^2 r| \leq (n-1) \frac{j'(r)}{j(r)},
\end{equation*}
where $j$ is smooth solution of
\begin{equation*}
    \begin{cases}
    j''(t) - B^2t^\beta j(t) = 0 \\
    j(0) = 0, \quad j'(0) = 1
    \end{cases}
\end{equation*}
on $[0, +\infty)$.
With similar estimates as in \Cref{subs:model manifold} we get $\frac{j'(t)}{j(t)} \sim B t^{\frac{\beta}{2}}$ for $t \to +\infty$, in particular, there exist some $R_1>R_0$ and some positive constant $\widetilde{C}$, depending on $B, \beta, R_1$ such that 
\begin{equation*}
    \label{eq:growth estimate j'/j}
     \frac{j'(t)}{j(t)} \leq \widetilde{C} t^{\frac{\beta}{2}}
\end{equation*}
for $t \geq R_1$. 
It follows that
\begin{equation*}
    |\nabla^2 r|(x) \leq (n-1)\frac{j'(r(x))}{j(r(x))} \leq C r^{\frac{\beta}{2}}(x), 
\end{equation*}
for all $x \in  M \setminus B_{R_1}$, where $C = C(B, \beta, R_1, n)$.
\end{proof}
\end{lemma}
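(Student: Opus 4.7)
The plan is to reduce the control of $|\nabla^2 r|$ to the control of $\Delta r$ by exploiting the sign of the Hessian of the distance function on a Cartan-Hadamard manifold, and then to invoke the Laplacian comparison theorem together with the Bessel asymptotics already developed for the model manifold in Subsection \ref{subs:model manifold}.

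First I would record that, since $(M,g)$ is Cartan-Hadamard, the distance function $r$ from the pole $o$ is smooth on $M\setminus\{o\}$. Moreover, the classical Hessian comparison against a flat model (using $\Sect \le 0$) yields that $\nabla^2 r$ is a positive semi-definite symmetric $(0,2)$ tensor on $M\setminus\{o\}$, with $\nabla^2 r(\nabla r,\cdot)=0$. Its Hilbert--Schmidt norm is therefore controlled by its trace: since all the non-zero eigenvalues $\lambda_1,\ldots,\lambda_{n-1}$ are non-negative,
\begin{equation*}
|\nabla^2 r| \;=\; \Big(\sum_{i=1}^{n-1}\lambda_i^{2}\Big)^{1/2} \;\le\; \sum_{i=1}^{n-1}\lambda_i \;=\; \Delta r.
\end{equation*}

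Next I would apply the standard Laplacian comparison theorem to the Ricci lower bound \eqref{eq:lower bound ricci}. This gives, outside $B_{R_0}$ and in the whole of $M\setminus\{o\}$ by continuity, the pointwise estimate $\Delta r \le (n-1)\, j'(r)/j(r)$, where $j$ is the model warping function solving the Jacobi-type ODE
\begin{equation*}
j''(t)-B^{2}t^{\beta}j(t)=0,\qquad j(0)=0,\ j'(0)=1,
\end{equation*}
exactly as in \eqref{eq:ODE comparison} but with $(A,\alpha)$ replaced by $(B,\beta)$. The explicit Bessel representation \eqref{eq:warping function model} and the asymptotic \eqref{eq:asymptotic j'/j} then give $j'(t)/j(t)\sim B\, t^{\beta/2}$ as $t\to+\infty$, so there exist $R_1>R_0$ and $\widetilde{C}>0$ with $j'(t)/j(t)\le \widetilde{C} t^{\beta/2}$ for all $t\ge R_1$.

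Combining the two bounds yields $|\nabla^2 r|(x)\le (n-1)\,\widetilde{C}\, r^{\beta/2}(x)$ for $x\in M\setminus B_{R_1}$, which is the claim with $C=(n-1)\widetilde{C}$. The only genuinely non-trivial ingredient is the positive semi-definiteness of $\nabla^2 r$, which crucially uses the Cartan--Hadamard assumption and makes the passage from trace to full Hilbert--Schmidt norm essentially costless; everything else is ODE asymptotics already handled in Section \ref{sec:CH estimates}.
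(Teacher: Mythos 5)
Your proposal is correct and follows essentially the same route as the paper: Hessian comparison on a Cartan--Hadamard manifold to get positive semi-definiteness of $\nabla^2 r$ and hence $|\nabla^2 r|\le \Delta r$, then Laplacian comparison under the Ricci lower bound and the Bessel asymptotics $j'(t)/j(t)\sim B\,t^{\beta/2}$ from Subsection \ref{subs:model manifold}. The only difference is that you spell out the elementary inequality $\bigl(\sum\lambda_i^2\bigr)^{1/2}\le\sum\lambda_i$ for non-negative eigenvalues, which the paper leaves implicit.
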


Once we have second order estimates on the Riemannian distance, we obtain $\{\chi_R\}$ by composing with a sequence of real cutoffs. 

\begin{lemma}
\label{lem:existence of cutoffs}
Let $(M, g)$ be a Cartan Hadamard manifold satisfying \eqref{eq:lower bound ricci}, then, there exists a family of smooth cutoffs $\{\chi_R\}\in C^\infty_0(M)$ with $R>>1$ such that
\begin{enumerate}
    \item $\chi_R \equiv 1$ on $B_R$ and $\chi_R \equiv 0$ on $M \setminus \overline{B_{2 R}}$;
    \item $|\nabla \chi_R| \leq \frac{C_1}{R}$;
    \item $|\nabla^2 \chi_R| \leq C_2 R^{\frac{\beta}{2} -1}$, 
\end{enumerate}
with $C_1, C_2 > 0$.
\begin{proof}
Fix $\phi: \R \to [0, 1]$ a smooth function such that $\phi \equiv 1$ on $(-\infty, 1]$ and $\phi \equiv 0$ on $[2, + \infty)$, and let $a> 0$ such that $|\phi'| + |\phi''| \leq a$ uniformly on $\R$. For $R >> 1$ (it suffices $R \geq R_1$, with $R_1$ as in \Cref{lem:upper bound hessian distance}), let
\begin{equation*}
    \phi_R(t) \coloneqq \phi\left(\frac{t}{R}\right)
\end{equation*}
so that 
\begin{equation*}
    |\phi_R'|\leq \frac{a}{R}, \qquad |\phi_R''|\leq \frac{a}{R^2}.
\end{equation*}
Then, define $\chi_R(x) \coloneqq \phi_R \circ r(x)$, we have $\chi_R \equiv 1$ on $B_R$ and $\chi_R \equiv 0$ on $M \setminus B_{2 R}$. 
Furthermore, 
\begin{align*}
    |\nabla \chi_R| &\leq  |\phi_R'(r(x))|  |\nabla r(x)| \leq \frac{C_1}{R}\\
    |\nabla^2 \chi_R| &\leq |\phi_R'(r(x))| |\nabla^2 r(x)| + |\phi_R''(r(x))|  |\nabla r(x)|^2 \leq C_2 R^{\frac{\beta}{2} -1}, 
\end{align*}
where $C_1, C_2$ depend on $a$ and the constant $C$ of \Cref{lem:upper bound hessian distance}.
\end{proof}
\end{lemma}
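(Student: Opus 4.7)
The plan is to construct $\chi_R$ by composing a rescaled one-variable bump with the Riemannian distance. I would pick a fixed $\phi \in C^\infty(\R,[0,1])$ with $\phi\equiv 1$ on $(-\infty,1]$ and $\phi\equiv 0$ on $[2,+\infty)$, set $\phi_R(t) := \phi(t/R)$, and define $\chi_R := \phi_R\circ r$. Because $(M,g)$ is Cartan--Hadamard the exponential map at $o$ is a diffeomorphism, so $r\in C^\infty(M\setminus\{o\})$; since $\phi_R$ is locally constant near $0$, the composition extends smoothly across the pole, and it is compactly supported by construction. Property (1) then holds by the definition of $\phi_R$.

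For property (2), a simple chain-rule computation together with $|\nabla r|=1$ immediately yields $|\nabla\chi_R| = |\phi'(r/R)|/R \leq a/R$, where $a := \sup_\R(|\phi'|+|\phi''|)$, so one can take $C_1=a$.

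Property (3) is the substantive one. Expanding the Hessian via the chain rule,
\[
    \nabla^2\chi_R = \phi_R''(r)\, dr\otimes dr + \phi_R'(r)\,\nabla^2 r,
\]
so that $|\nabla^2\chi_R|\leq a/R^2 + (a/R)|\nabla^2 r|$. This is where I would invoke \Cref{lem:upper bound hessian distance}, which, thanks to \eqref{eq:lower bound ricci}, the Hessian comparison theorem, and the asymptotic $j'(t)/j(t)\sim Bt^{\beta/2}$, provides $|\nabla^2 r|(x)\leq C r^{\beta/2}(x)$ for $r(x)$ large. On the support of $\phi_R'\circ r$, which lies in $\overline{B_{2R}}\setminus B_R$, this yields a contribution of order $(a/R)\cdot C(2R)^{\beta/2} = O(R^{\beta/2-1})$. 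Since $\beta\geq 0$ and $R\gg 1$, one has $R^{\beta/2-1}\geq R^{-2}$, so this term dominates the $R^{-2}$ piece and gives property (3) with a constant $C_2$ depending on $a$ and the constant in \Cref{lem:upper bound hessian distance}.

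The only genuine content is the Hessian bound on $r$; everything else is bookkeeping via the chain rule. A minor subtlety worth flagging is smoothness at the pole, which is cheap: for $R \geq R_1$ we have $\chi_R\equiv 1$ on a neighborhood of $o$, so no regularity issue arises from the non-smoothness of $r$ at $o$. One should also remark that the threshold $R \geq R_1$ (with $R_1$ from \Cref{lem:upper bound hessian distance}) is exactly what is meant by the informal ``$R\gg 1$'' in the statement.
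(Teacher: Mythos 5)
Your proposal is correct and follows essentially the same route as the paper's proof: compose a rescaled bump $\phi(t/R)$ with the Riemannian distance, use $|\nabla r|=1$ for the gradient bound, and invoke \Cref{lem:upper bound hessian distance} on the annulus $\overline{B_{2R}}\setminus B_R$ for the Hessian bound. The extra remark about smoothness across the pole is a welcome (if minor) addition that the paper leaves implicit.
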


\begin{remark}
\label{rmk:distance like case}
The above construction of the Hessian cutoffs is not the only possible one. 
It is worth noticing that the family $\{\chi_R\}$ can be constructed on Riemannian manifolds without any topological restrictions as long as one of the following assumptions holds: 
\begin{enumerate}[label = (\alph*)]
\item $\displaystyle|\Ric|(x) \leq B^2 r^{\beta}(x)$ and $\displaystyle\inj(x) \geq i_0 r^{-\frac{\beta}{2}}(x) > 0$
\item $\displaystyle|\Sect|(x) \leq B^2 r^{\beta}(x)$,
\end{enumerate}
for some $ B, i_0 > 0$ and $\beta \geq 0$.
In this setting, although the Riemannian distance might loose smoothness, it is possible to construct a distance-like function $H \in C^\infty(M)$ such that 
\begin{enumerate}[label=(\roman*)]
    \item $C^{-2} r(x) \leq H(x) \leq \max\{r(x), 1\}$;
    \item $|\nabla H(x)| \leq 1$;
    \item $|\nabla^2 H(x)| \leq C \max \{r^{\frac{\beta}{2}}(x), 1\}$,
\end{enumerate}
for some $C>1$, see \cite[Theorem 1.2]{IRV2020}. 
Then, one defines $\chi_R = \phi_R \circ H(x)$ where $\phi_R$ is a family of real cutoffs in a similar fashion of \Cref{lem:existence of cutoffs}. 
\end{remark}

We can now prove the density of smooth compactly supported functions in the Sobolev space $W^{2, p}$, namely (a) of \Cref{th_main}.
To obtain this we assume a double bound on the radial Ricci curvature. 
The bound from below allows the construction of the smooth cutoff functions while the bound from above ensures the validity of the functional estimates in \Cref{sec:Hardy and Rellich}. 

\begin{theorem}
\label{thm:density on CH manifolds}
Let $(M, g)$ be a Cartan-Hadamard manifold with a fixed pole $o\in M$. 
Suppose that
\begin{equation*}
    -(n-1)B^2 r^{2\alpha + 2}(x) \leq \Ric_o(x) \leq - 2(n-1)^2A^2 r^\alpha(x), \quad \forall x \in M\setminus B_{R_0}
\end{equation*}
for some $A, B, R_0 >0$ and $\alpha \geq 0$.
Then $W^{2, p}_0(M) = W^{2, p}(M)$ for all $p > 1$. 
\begin{proof}
Since $C^\infty(M)\cap W^{2, p}(M)$ is dense in $W^{2, p}(M)$ (see \cite{GGP2017}), it suffices to show that $C^\infty_0(M)$ is dense in $C^\infty(M)\cap W^{2, p}(M)$ with respect to the $W^{2, p}$ norm. 
To this goal, take $f \in C^\infty(M)\cap W^{2, p}(M)$ and consider a family of cutoffs $\{\chi_R\} \subseteq C^\infty(M)$ as in \Cref{lem:existence of cutoffs}. 
Define $f_R \coloneqq \chi_R f \in C^\infty_0(M)$ and observe that
\begin{align}
    \label{eq:Lp density function}
    \Vert (f_R - f)\Vert_{L^p} &= \Vert (\chi_R - 1) f\Vert_{L^p} \\
    \label{eq:Lp density gradient}
    \Vert \nabla(f_R - f)\Vert_{L^p} &\leq \Vert f\nabla\chi_R\Vert_{L^p} + \Vert (\chi_R - 1) \nabla f\Vert_{L^p} \\
    \label{eq:Lp density hessian}
    \Vert \nabla^2(f_R - f)\Vert_{L^p} &\leq 2\Vert |\nabla f||\nabla\chi_R|\Vert_{L^p} + \Vert (\chi_R - 1) \nabla ^2 f\Vert_{L^p}  + \Vert  f \nabla ^2 \chi_R \Vert_{L^p}. 
\end{align}
Since $\nabla \chi_R$ and $(\chi_R - 1)$ are uniformly bounded and supported in $M\setminus \overline{B}_R$, $f \in W^{2, p}(M)$ implies that the RHS of \eqref{eq:Lp density function}, \eqref{eq:Lp density gradient}, and \eqref{eq:Lp density hessian} except the last term, vanish as $R \to + \infty$.
We only need to show that $|| f \nabla^2 \chi_R||_{L^p} \to 0$ as $R\to 0$.
To see this, it is sufficient to observe that $|\nabla^2\chi_R| \leq C r^\alpha$ and $\supp(\chi_R) \subseteq M \setminus \overline{B}_R$, then by \Cref{cor:rellich with weight hessian cutoffs} we conclude the proof. 
\end{proof}
\end{theorem}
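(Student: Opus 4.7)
The plan is to carry out a cutoff/truncation argument. Since a Meyers--Serrin-type theorem from \cite{GGP2017} guarantees that $C^\infty(M)\cap W^{2,p}(M)$ is dense in $W^{2,p}(M)$ for $p\in[1,+\infty)$, the task reduces to approximating any smooth $f\in W^{2,p}(M)$ by an element of $C^\infty_0(M)$ in the $W^{2,p}$-norm. The natural candidate is $f_R:=\chi_R f$, where $\{\chi_R\}$ is the family of Hessian cutoffs produced by \Cref{lem:existence of cutoffs} using the lower Ricci bound $\Ric_o\ge -(n-1)B^2 r^{2\alpha+2}$ (so $\beta=2\alpha+2$ in the notation of that lemma).

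The strategy would be to expand $\|f_R-f\|_{W^{2,p}}$ term by term. Writing $f_R-f=(\chi_R-1)f$ and applying the product rule, one collects $\|(\chi_R-1)f\|_{L^p}$, $\|(\chi_R-1)\nabla f\|_{L^p}$, $\|(\chi_R-1)\nabla^2 f\|_{L^p}$, $\|f\nabla\chi_R\|_{L^p}$, $\||\nabla f||\nabla\chi_R|\|_{L^p}$, and $\|f\nabla^2\chi_R\|_{L^p}$. The first three tend to $0$ by dominated convergence, since $|\chi_R-1|\le 1$ is supported in $M\setminus B_R$ and $f,\nabla f,\nabla^2 f\in L^p(M)$. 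The next two are handled using $|\nabla\chi_R|\le C_1/R$, which forces them to zero as well.

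The delicate point is the term $\|f\nabla^2\chi_R\|_{L^p}$. By \Cref{lem:existence of cutoffs} one only has $|\nabla^2\chi_R|\le C_2 R^{\beta/2-1}=C_2 R^{\alpha}$, a bound that \emph{does not vanish} as $R\to+\infty$ when $\alpha>0$; moreover, pointwise $|\nabla^2\chi_R|(x)\le Cr^\alpha(x)$ on its support $B_{2R}\setminus B_R$. This is where the upper Ricci bound $\Ric_o\le -2(n-1)^2 A^2 r^\alpha$ must be brought in: setting $\omega_R:=|\nabla^2\chi_R|$, one has $\omega_R\le Cr^\alpha$ and $\supp(\omega_R)\subseteq M\setminus\overline{B}_R$, so \Cref{cor:rellich with weight hessian cutoffs} (built on the Green-function Hardy inequality of \Cref{thm:rellich with weight sobolev}) yields $\|\omega_R f\|_{L^p}\to 0$ as $R\to+\infty$.

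The main obstacle, as anticipated in the introduction, is exactly this last term: the polynomial growth $r^\alpha$ of the Hessian cutoffs is precisely cancelled by the weighted second-order Hardy inequality coming from the upper bound on $\Ric_o$. Without an upper bound on Ricci, there would be no available weighted inequality matching the growth rate of $|\nabla^2\chi_R|$, and the argument would fail; with it, the proof closes immediately.
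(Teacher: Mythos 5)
Your proposal is correct and follows essentially the same route as the paper: reduce via the Meyers--Serrin theorem to smooth Sobolev functions, truncate with the Hessian cutoffs of \Cref{lem:existence of cutoffs}, dispose of all terms but $\Vert f\nabla^2\chi_R\Vert_{L^p}$ by the uniform bounds on $\chi_R-1$ and $\nabla\chi_R$, and control the last term via \Cref{cor:rellich with weight hessian cutoffs} using $|\nabla^2\chi_R|\le Cr^\alpha$ on $\supp(\nabla^2\chi_R)\subseteq M\setminus\overline{B}_R$. The identification $\beta/2-1=\alpha$ and the role of the upper Ricci bound in supplying the weighted Hardy inequality are exactly as in the paper's argument.
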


\begin{remark}
When $p = 1$ our strategy to construct Hardy-type inequalities fails. 
Note for instance that the constant in \eqref{eq:log hardy} and subsequent derived inequalities explodes as $p \to 1$. 
Nevertheless, we expect the result to hold even when $p=1$. 
\end{remark}

\section{An \texorpdfstring{$L^2$}{L^2}-Calder\'on-Zygmund inequality}
\label{sec:CZ(2)}
As a further application of the tools developed in \Cref{sec:Hardy and Rellich}, we prove the validity of a $L^2$-Calder\'on-Zygmund inequality on Cartan-Hadamard manifolds with bounds on Ricci curvature. 
In the spirit of \cite{IRV2019}, we first prove a weighted $CZ(2)$ inequality which holds under lower bound on Ricci curvature.

\begin{theorem}
\label{thm:CZ(2) weighted}
Let $(M, g)$ be a Cartan-Hadamard manifold with a fixed pole $o \in M$.
Suppose that 
\begin{equation*}
    \Ric (x) \geq - (n-1)B^2 r^\beta(x) \quad \forall x \in M \setminus B_{R_0}
\end{equation*}
in the sense of quadratic forms for some $B, R_0$ and $\beta \geq 0$. 
Then, for every $\varepsilon > 0$ there exists a constant $A_1 = A_1(\varepsilon) > 0$ such that 
\begin{equation}
    \label{eq:CZ(2) weighted}
    \Vert \nabla^2\varphi \Vert_{L^2} \leq A_1 \left[ \Vert \Delta \varphi \Vert_{L^2} + \Vert \varphi \Vert_{L^2} \right] + A_2\varepsilon^2 \Vert r^\beta \varphi \Vert_{L^2} \qquad \forall \varphi \in C^\infty_0(M).
\end{equation}
Here $A_2$ is a fixed positive constant independent of $\varepsilon$. 

\begin{proof}
Let $R_1$ be the constant of \Cref{lem:upper bound hessian distance} and let $h: \R \to [1, + \infty)$ be a smooth function such that $h(t) \equiv 1$ for $t \leq R_1$, and $h(t) = t - a$ for some $a>0$ and $t \geq 2R_1$ and $|h'(t)| \leq 1$ for all $t$.
Define $H(x) \coloneqq h(r(x))$ so that
\begin{enumerate}[label=(\roman*)]
    \item $\max\{C^{-1}r(x), 1\} \leq H(x) \leq \max\{Cr(x), 1\}$;
    \item $|\nabla H (x)| \leq 1$;
    \item $|\nabla^2 H(x)| \leq C r^{\frac{\beta}{2}}(x)$
\end{enumerate}
for some $C>1$.
Consider on $(M, g)$ the following conformal deformation:
\begin{equation*}
    \tilde{g} \coloneqq e^{2\phi} g, \text{ where } \phi \coloneqq \frac{\beta}{2} \log{H}. 
\end{equation*}
Note that $M$ remains complete also with respect to $\tilde g$. 
In this proof we denote with $\widetilde{\nabla}^2, \widetilde{\nabla}, \widetilde{\Delta}$, $\widetilde{\Ric}$ the Hessian, gradient, Laplacian and Ricci tensor with respect to $\widetilde{g}$. 
Moreover, let $\widetilde{L^2} = L^2(M, dV_{\tilde{g}})$.
Since 
\begin{equation*}
    |\nabla \phi| \leq \frac{\beta}{2}, \qquad |\nabla^2 \phi| \leq \beta\max\{C r^{\frac{\beta}{2}}(x), 1\}, 
\end{equation*}
we immediately get that $\widetilde{\Ric}$ is bounded from below by some constant $\widetilde{C}$ depending on $\beta, B, C$ and $n$, see (25) in \cite{IRV2019}. 
Thanks to \cite[Proposition 4.5]{GP2015}, this implies the validity on $(M, \widetilde{g})$ of the following \textit{infinitesimal} $CZ(2)$ inequality: for every $\varepsilon > 0$
\begin{equation}
    \label{eq:CZ(2) epsilon}
    \Vert \widetilde{\nabla}^2 u\Vert_{\widetilde{L^2}}^2 \leq \frac{\widetilde{C} \varepsilon^2}{2} \Vert u \Vert_{\widetilde{L^2}}^2 + \left(1 + \frac{\widetilde{C}^2}{2\varepsilon^2}\right) \Vert \widetilde{\Delta}u\Vert_{\widetilde{L^2}}^2 \qquad \forall u \in C^\infty_0(M).
\end{equation} 
Throughout the rest of the proof, we denote with $A_i$ real positive constants depending on $\beta, B, n, C$ and, possibly, $\varepsilon$.
By standard estimates (see \cite[Section 8.3]{IRV2019}) we get
\begin{align}
\label{eq:hessian conformal estimate}
|\widetilde{\nabla}^2 u|^2 dV_{\widetilde{g}} &\geq e^{(n-4)\phi} \left\lbrace \frac{1}{2}|\nabla^2 u|^2 - A_3|\nabla u|^2 - |\Delta u|^2 \right\rbrace dV_g,\\
\label{eq:laplacian conformal estimate}
|\widetilde{\Delta}u|^2 dV_{\widetilde{g}} &\leq e^{(n-4)\phi} \left\lbrace |\Delta u|^2 + A_4|\nabla u|^2 \right\rbrace dV_g. 
\end{align}
Inserting \eqref{eq:hessian conformal estimate} and \eqref{eq:laplacian conformal estimate} in \eqref{eq:CZ(2) epsilon} yields
\begin{equation}
    \label{eq:CZ(2) epsilon conformal}
    \Vert H^{\frac{\beta}{4}(n-4)} |\nabla^2 u|\Vert^2_{L^2} \leq \widetilde{C}\varepsilon^2 \Vert H^{\frac{\beta}{4}n} u\Vert^2_{L^2} + A_5\Vert H^{\frac{\beta}{4}(n-4)} \Delta u\Vert^2_{L^2} + A_6 \Vert H^{\frac{\beta}{4}(n-4)} |\nabla
     u|\Vert^2_{L^2}.
\end{equation}
Note here that $A_5$ and $A_6$ depend also on $\varepsilon$.
For $\varphi \in C^\infty_0(M)$ we take $u = H^{-(n-4)\frac{\beta}{4}}\varphi \in C^{\infty}_0(M)$. 
By straightforward computations we obtain
\begin{align*}
    H^{(n-4)\frac{\beta}{4}}\nabla u =& -(n-4)\frac{\beta}{4} \frac{\nabla H}{H} \varphi + \nabla \varphi,\\
     H^{(n-4)\frac{\beta}{4}}\nabla^2 u =& \nabla^2 \varphi -(n-4)\frac{\beta}{4} \left(\frac{\nabla H}{H} \otimes \nabla \varphi + \nabla\varphi \otimes \frac{\nabla H}{H}\right) \\
     &+ \varphi \left\{ -(n-4)\frac{\beta}{4} \frac{\nabla^2 H}{H} +(n-4)\frac{\beta}{4}\left[(n-4)\frac{\beta}{4}+1\right] \frac{\nabla H}{H} \otimes \frac{\nabla H}{H} \right\}, \\
     H^{(n-4)\frac{\beta}{4}}\Delta u =& \Delta \varphi -(n-4)\frac{\beta}{4} g\left(\frac{\nabla H}{H}, \nabla \varphi\right) \\
     &+ \varphi \left\{ -(n-4)\frac{\beta}{4} \frac{\nabla^2 H}{H} +(n-4)\frac{\beta}{4}\left[(n-4)\frac{\beta}{4}+1\right] g\left(\frac{\nabla H}{H}, \frac{\nabla H}{H}\right) \right\}.
\end{align*}
Since 
\begin{equation*}
    \frac{|\nabla H|}{|H|} \leq 1 \qquad \frac{|\nabla^2 H|}{|H|} \leq C^2 r^{\frac{\beta}{2}-1}, 
\end{equation*}
we have
\begin{align*}
     H^{(n-4)\frac{\beta}{4}}|\nabla u| \leq& A_7|\varphi| + |\nabla \varphi|;\\
      H^{(n-4)\frac{\beta}{4}}|\nabla^2 u| \geq& |\nabla^2 \varphi| - A_7|\nabla \varphi| - A_8 H^{\frac{\beta}{2}-1}|\varphi|; \\
      H^{(n-4)\frac{\beta}{4}}|\Delta u| \leq& |\Delta \varphi| + A_7|\nabla \varphi| + A_8 H^{\frac{\beta}{2}-1}|\varphi|.
\end{align*}
Using these estimates in \eqref{eq:CZ(2) epsilon conformal} yields
\begin{equation}
    \label{eq:CZ(2) weighted intermediate}
    \Vert \nabla^2 \varphi\Vert^2_{L^2} \leq \widetilde{C}\varepsilon^2 \Vert H^{\beta} \varphi\Vert^2_{L^2} + A_9\Vert \nabla \varphi \Vert^2_{L^2} + A_5 \Vert \Delta
     u\Vert^2_{L^2} + A_{10} \Vert
     H^{\frac{\beta}{2}-1}\varphi\Vert^2_{L^2}.
\end{equation}
By the divergence theorem and Cauchy-Schwarz inequality we also have
\begin{equation*}
    \Vert \nabla \varphi \Vert_{L^2}^2 = \int_M |\nabla \varphi |^2 dV_g = -\int_M \varphi \Delta \varphi dV_g \leq 2\Vert \varphi \Vert_{L^2}^2 + 2\Vert \Delta \varphi \Vert_{L^2}^2.
\end{equation*}
Moreover, since $H^{\frac{\beta}{2}-1} = o(H^{\beta})$ as $r(x) \to +\infty$, for all $\varepsilon > 0$ there exists some constant $C_\varepsilon > 0$ such that 
\[H^{\frac{\beta}{2}-1} \leq\varepsilon \sqrt{\frac{\widetilde{C}}{A_{10}}} H^{\beta} + C_\varepsilon,\] 
hence, $A_{10} \Vert H^{\frac{\beta}{2}-1}\varphi\Vert^2_{L^2} \leq \varepsilon^2 \widetilde{C}\Vert H^{\beta} \varphi\Vert^2_{L^2} + C_\varepsilon^2 \Vert \varphi \Vert_{L^2}^2$.
Using these latter estimates, \eqref{eq:CZ(2) weighted intermediate} becomes
\begin{equation}
    \label{eq:CZ(2) weighted H}
    \Vert \nabla^2\varphi \Vert^2_{L^2} \leq A_1^2 \left[ \Vert \Delta \varphi \Vert^2_{L^2} + \Vert \varphi \Vert^2_{L^2} \right] + \widetilde{C}\varepsilon^2 \Vert H^\beta \varphi \Vert^2_{L^2}
\end{equation}
Finally, since $H(x) \leq \max\{C r(x), 1\}$ we have 
\begin{equation*}
    \int_M H^{2\beta}\varphi^2 dV_g \leq \int_{r \le 1} \varphi^2 dV_g + C^{2\beta}\int_{r\ge 1} r^{2\beta}\varphi^2 dV_g =  \Vert \varphi \Vert_{L^2} + C^{2\beta} \Vert r^\beta \varphi \Vert_{L^2}.
\end{equation*}
which gives \eqref{eq:CZ(2) weighted}
\end{proof}
\end{theorem}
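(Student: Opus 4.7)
The plan is to reduce the problem to a setting where the Ricci curvature is bounded below by a constant, by means of a conformal deformation of $g$, and then apply the known infinitesimal $L^2$-Calder\'on-Zygmund inequality from \cite{GP2015} on the new manifold. The polynomial weight $r^\beta$ on the right hand side will emerge naturally from the conformal factor.

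First I would construct a smooth distance-like function $H \in C^\infty(M)$ with $H(x)\asymp \max\{r(x),1\}$, $|\nabla H|\le 1$, and $|\nabla^2 H|\le Cr^{\beta/2}$. On a Cartan-Hadamard manifold $r$ is smooth away from $o$, so I can simply set $H(x)=h(r(x))$ where $h\colon\mathbb{R}\to[1,+\infty)$ is a smooth function equal to $1$ near $0$ and to $t-a$ for $t\ge 2R_1$, with $R_1$ the threshold in \Cref{lem:upper bound hessian distance}; the Hessian bound on $H$ then follows from \eqref{eq:hessian distance function} together with $|h'|\le 1$. Next I set $\phi\coloneqq\frac{\beta}{2}\log H$ and $\tilde g\coloneqq e^{2\phi}g$. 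Because $|\nabla\phi|\le \beta/2$ and $|\nabla^2\phi|\lesssim r^{\beta/2}$, the standard conformal transformation formulas for Ricci show that $\widetilde{\Ric}$ is bounded below by a finite constant $\widetilde C$; completeness of $\tilde g$ is clear since $\phi\ge 0$.

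Having turned the curvature hypothesis into a constant lower Ricci bound, I would invoke the infinitesimal $L^2$-CZ inequality of \cite[Proposition 4.5]{GP2015} on $(M,\tilde g)$: for every $\varepsilon>0$ and every $u\in C^\infty_0(M)$,
\begin{equation*}
\|\widetilde\nabla^2 u\|_{\widetilde{L^2}}^2 \le \tfrac{\widetilde C\varepsilon^2}{2}\|u\|_{\widetilde{L^2}}^2 + \bigl(1+\tfrac{\widetilde C^2}{2\varepsilon^2}\bigr)\|\widetilde\Delta u\|_{\widetilde{L^2}}^2.
\end{equation*}
Then I would pull this back to the original metric, using pointwise comparisons of the form $|\widetilde\nabla^2 u|^2 dV_{\tilde g}\gtrsim e^{(n-4)\phi}\bigl\{\tfrac12|\nabla^2 u|^2-A|\nabla u|^2-|\Delta u|^2\bigr\}dV_g$ and its counterpart for $\widetilde\Delta$, which are standard from conformal calculus; these introduce the weight $H^{(n-4)\beta/4}$ on both sides.

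The final step is the change of variable $u=H^{-(n-4)\beta/4}\varphi$ with $\varphi\in C^\infty_0(M)$, which eliminates the weight from the Hessian term while generating commutators involving $\nabla H/H$ and $\nabla^2 H/H$. Here the crucial estimates $|\nabla H|/H\le 1$ and $|\nabla^2 H|/H\lesssim r^{\beta/2-1}$ give the size of each commutator. The Hessian of $u$ picks up a lower bound $|\nabla^2 u|H^{(n-4)\beta/4}\ge |\nabla^2\varphi|-A|\nabla\varphi|-AH^{\beta/2-1}|\varphi|$, and similarly $|\Delta u|H^{(n-4)\beta/4}\le |\Delta\varphi|+A|\nabla\varphi|+AH^{\beta/2-1}|\varphi|$, while the zero-th order term on the right becomes $\tilde C\varepsilon^2\|H^\beta\varphi\|_{L^2}^2$. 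The gradient term $\|\nabla\varphi\|_{L^2}^2$ is disposed of by the integration-by-parts identity $\|\nabla\varphi\|_{L^2}^2=-\int \varphi\Delta\varphi$, bounded by $2\|\varphi\|_{L^2}^2+2\|\Delta\varphi\|_{L^2}^2$. The extra weighted term $\|H^{\beta/2-1}\varphi\|_{L^2}^2$, of strictly lower order than $H^\beta$ at infinity, is split via Young's inequality into an arbitrarily small multiple of $\|H^\beta\varphi\|_{L^2}^2$ plus a constant multiple of $\|\varphi\|_{L^2}^2$, preserving the $A_2\varepsilon^2$-factor in front of the weighted term. Finally, since $H\lesssim\max\{r,1\}$, I can bound $\|H^\beta\varphi\|_{L^2}^2$ by $\|\varphi\|_{L^2}^2+\|r^\beta\varphi\|_{L^2}^2$, absorbing the $\|\varphi\|_{L^2}^2$ contribution into the $A_1$-term.

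The main obstacle will be the bookkeeping of constants: one has to make sure that only the coefficient of $\|r^\beta\varphi\|_{L^2}$ stays proportional to $\varepsilon^2$, while all terms that we eventually dump into $\|\varphi\|_{L^2}^2$ or $\|\Delta\varphi\|_{L^2}^2$ are allowed to blow up as $\varepsilon\to 0$. This is achieved by always using Young's inequality in the asymmetric form that hands the small parameter to the $H^\beta$-term and keeps the large constant with the lower-order pieces.
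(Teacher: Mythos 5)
Your proposal follows essentially the same route as the paper's proof: the same distance-based conformal deformation $\tilde g = H^{\beta}g$, the same appeal to the infinitesimal $CZ(2)$ inequality of \cite[Proposition 4.5]{GP2015} on $(M,\tilde g)$, the same substitution $u=H^{-(n-4)\beta/4}\varphi$ with the commutator bounds $|\nabla H|/H\le 1$, $|\nabla^2 H|/H\lesssim r^{\beta/2-1}$, and the same disposal of the gradient and lower-order weighted terms. The argument is correct as outlined.
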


\begin{remark}
Note that in \Cref{thm:CZ(2) weighted} we require a bound on Ricci in the sense of quadratic forms, that is 
\begin{equation*}
    \Ric(X, X)(x) \geq -(n-1)B^2r^\beta(x) g(X,X)
\end{equation*}
for any $X \in T_xM$. 
This is a stronger assumption than the previous bounds on radial Ricci curvature and is necessary to ensure the validity of \eqref{eq:CZ(2) epsilon}.  
\end{remark}

If we also assume also an upper bound on the Ricci curvature, using the second order Hardy-type inequality \eqref{eq:rellich with p-green sobolev} we can estimate the last term on \eqref{eq:CZ(2) weighted} thus proving (b) of \Cref{th_main}. 

\begin{theorem}
\label{thm:CZ(2) on CH}
Let $(M, g)$ be a Cartan-Hadamard manifold with a fixed pole $o \in M$. Suppose that
\begin{equation}
    - (n-1)B^2 r^\alpha(x) \leq \Ric(x) \leq -2(n-1)^2A^2 r^{\alpha}(x) \quad \forall x \in M \setminus B_{R_0}
\end{equation}
for some constants $B>\sqrt{2}(n-1)A>0$ and some $\alpha\ge 0$.
Then, the following $L^2$-Calder\'on-Zygmund inequality holds on $M$:
\begin{equation}
    \label{eq:CZ(2)}
    \Vert \nabla^2 \varphi \Vert_{L^2} \leq C \left(\Vert \Delta \varphi \Vert_{L^2} + \Vert \varphi \Vert_{L^2}\right)
\end{equation}
for all $\varphi \in C^\infty_0(M)$.
\begin{proof}
By \Cref{thm:CZ(2) weighted} we have the validity of \eqref{eq:CZ(2) weighted}, thus, we only need to estimate the weighted term $\Vert r^\beta \varphi\Vert_{L^2}^2$. 
Let $K$ be a compact large enough (see \Cref{thm:rellich with weight sobolev}), then
\begin{equation*}
    \Vert r^\beta \varphi\Vert_{L^2}^2 = \int_M r^{2\beta} \varphi^2 dV_g \leq \max_{K} r^{2\beta}\int_K \varphi^2 dV_g + \int_{M\setminus K} r^{2\beta} \varphi^2 dV_g.
\end{equation*}
Thanks to \Cref{thm:rellich with weight sobolev} we have 
\begin{equation*}
    \int_{M\setminus K} r^{2\beta} \varphi^2 dV_g \leq C'\int_M |\nabla^2 \varphi|^2 dV_g,
\end{equation*}
so that
\begin{equation*}
   \Vert \nabla^2 \varphi \Vert_{L^2} \leq A' \left(\Vert \Delta \varphi \Vert_{L^2} + \Vert \varphi \Vert_{L^2}\right) + A'' \varepsilon^2 \left(\Vert \varphi\Vert_{L^2}^2 + \Vert \nabla^2 \varphi\Vert_{L^2}^2\right).
\end{equation*}
Since $\varepsilon$ can be made arbitrarily small and $A''$ is a fixed constant this last estimate yields \eqref{eq:CZ(2)}. 
\end{proof}
\end{theorem}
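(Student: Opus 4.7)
The plan is to combine the two main ingredients already established: the weighted $CZ(2)$ inequality of \Cref{thm:CZ(2) weighted}, which only uses the Ricci lower bound, and the second order Hardy inequality of \Cref{thm:rellich with weight sobolev}, which exploits the Ricci upper bound. The lower bound $\Ric \ge -(n-1)B^2 r^\alpha$ directly puts us in the hypotheses of \Cref{thm:CZ(2) weighted} with $\beta=\alpha$, while the upper bound $\Ric\le -2(n-1)^2A^2 r^\alpha$ gives us access to the Hardy machinery of \Cref{sec:Hardy and Rellich}. Observe that here we need both bounds with the \emph{same} exponent $\alpha$: the Hardy weight in \Cref{thm:rellich with weight sobolev} grows at most like $r^\alpha$, which is exactly the order of the weight appearing on the right-hand side of \Cref{thm:CZ(2) weighted} when $\beta=\alpha$.

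The first step will be to apply \Cref{thm:CZ(2) weighted} with $\beta=\alpha$ to obtain, for every $\varepsilon>0$ and every $\varphi\in C^\infty_0(M)$,
\begin{equation*}
\Vert \nabla^2\varphi\Vert_{L^2} \le A_1(\varepsilon)\bigl[\Vert \Delta\varphi\Vert_{L^2}+\Vert \varphi\Vert_{L^2}\bigr] + A_2\,\varepsilon^2 \Vert r^\alpha \varphi\Vert_{L^2},
\end{equation*}
with $A_2$ independent of $\varepsilon$. The second step is to control the tail term $\Vert r^\alpha \varphi\Vert_{L^2}$ in terms of the full $W^{2,2}$ norm of $\varphi$. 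To do so, fix a compact $K\supset B_{R_0}$ large enough so that \Cref{thm:rellich with weight sobolev} applies on $M\setminus K$. Splitting the integral,
\begin{equation*}
\Vert r^\alpha\varphi\Vert_{L^2}^2 \le \Bigl(\max_{K} r^{2\alpha}\Bigr)\Vert \varphi\Vert_{L^2}^2 + \int_{M\setminus K} r^{2\alpha}\varphi^2\,dV_g,
\end{equation*}
and applying \Cref{thm:rellich with weight sobolev} (with $p=2$ and weight $\omega = r^\alpha \mathbf{1}_{M\setminus K}$, which is admissible by construction) yields
\begin{equation*}
\int_{M\setminus K} r^{2\alpha}\varphi^2\,dV_g \le C\,\Vert \varphi\Vert_{W^{2,2}}^2 \le C'\bigl(\Vert \varphi\Vert_{L^2}^2 + \Vert \nabla^2\varphi\Vert_{L^2}^2\bigr).
\end{equation*}

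Plugging this back into the weighted inequality produces
\begin{equation*}
\Vert \nabla^2\varphi\Vert_{L^2}^2 \le A'(\varepsilon)\bigl[\Vert \Delta\varphi\Vert_{L^2}^2+\Vert \varphi\Vert_{L^2}^2\bigr] + A''\,\varepsilon^2\Vert \nabla^2\varphi\Vert_{L^2}^2,
\end{equation*}
where crucially $A''$ does not depend on $\varepsilon$. The final step is the absorption argument: choose $\varepsilon>0$ so small that $A''\varepsilon^2 < 1/2$, then move the $\Vert \nabla^2\varphi\Vert_{L^2}^2$ term from the right-hand side to the left-hand side. Taking square roots yields \eqref{eq:CZ(2)}.

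The main subtlety I expect is making sure the absorption step is legitimate, i.e., that \emph{a priori} $\Vert \nabla^2\varphi\Vert_{L^2}<\infty$ so that subtracting it is meaningful. Since $\varphi\in C^\infty_0(M)$, this is automatic. A secondary technical point is that the Hardy inequality \eqref{eq:rellich with p-green sobolev} is stated for functions whose support avoids the pole, so one must first multiply by a fixed cutoff vanishing near $o$; this is handled by the initial splitting over $K$. Finally, one should check that the hypothesis $B>\sqrt{2}(n-1)A$ is never actually needed in the argument above, except implicitly to make the lower and upper bounds compatible (so that the set of manifolds considered is non-empty), since the two inequalities of \Cref{thm:CZ(2) weighted} and \Cref{thm:rellich with weight sobolev} are applied independently with their respective constants $B$ and $A$.
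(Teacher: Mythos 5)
Your proposal is correct and follows essentially the same route as the paper: apply the weighted $CZ(2)$ inequality of \Cref{thm:CZ(2) weighted} with exponent $\alpha$, control the resulting tail term $\Vert r^\alpha\varphi\Vert_{L^2}$ by splitting over a compact set and invoking the second order Hardy inequality of \Cref{thm:rellich with weight sobolev}, then absorb the small Hessian term. Your additional remarks (legitimacy of the absorption for $\varphi\in C^\infty_0(M)$, the role of the condition $B>\sqrt{2}(n-1)A$, and the need to interpolate away the gradient term in the $W^{2,2}$ norm) are accurate and, if anything, slightly more careful than the paper's own write-up.
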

\begin{remark}
It would be interesting to obtain a $CZ(p)$ estimate also in the general case $p \in(1,+\infty)$. 
To do this, however, one would need an \textit{infinitesimal} $CZ(p)$ estimate similar to \eqref{eq:CZ(2) epsilon} which, to the best of our knowledge, is not known when $p \neq 2$. 
\end{remark}
\begin{remark}
\label{rmk:CZ(p) implies equality of sobolev}
The validity of an $L^2$-Calder\'on-Zygmund inequality directly implies the density of $C^\infty_0(M)$ in $W^{2, 2}(M)$. 
The observation is due to S. Pigola and goes as follows. 
Let $\varphi \in W^{2, 2}(M)\subseteq H^{2, 2}(M)$, thanks to a result by O. Milatovic \cite[Appendix A]{GP2019}, there exists a sequence of functions $\{\varphi_k\}\subseteq C^\infty_0(M)$ such that $\varphi_k \to \varphi$ in $H^{2, 2}(M)$. 
It follows that $\{\varphi_k\}$ is Cauchy in $H^{2, 2}(M)$, using \eqref{eq:CZ(2)} and the validity on $(M, g)$ of an $L^2$-gradient estimate (see \cite[Proposition 3.10b]{GP2015}) we deduce that  $\{\varphi_k\}$ is Cauchy also in $W^{2, 2}(M)$. 
By completeness we have $\varphi_k \to \overline{\varphi}$ in $W^{2, 2}(M)$, however, $\overline{\varphi} = \varphi$ thanks to the continuous embedding $W^{2, 2}(M) \subseteq H^{2, 2}(M)$. 
See also \cite[Remark 2.1]{V2020}. 
As a result, \Cref{thm:CZ(2) on CH} provides an alternative proof of \Cref{thm:density on CH manifolds}, although under heavier assumptions.
\end{remark}

The above observation also implies the following corollary.

\begin{corollary}
Let $(M, g)$ be a Cartan-Hadamard manifold as in \Cref{thm:CZ(2) on CH}, then
\begin{equation*}
    W^{2, 2}_0(M) = W^{2, 2}(M) = H^{2,2}(M). 
\end{equation*}
\end{corollary}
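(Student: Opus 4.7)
The plan is to establish the chain of inclusions
\[
H^{2,2}(M) \subseteq W^{2,2}_0(M) \subseteq W^{2,2}(M) \subseteq H^{2,2}(M).
\]
The middle inclusion is the definition of $W^{2,2}_0(M)$, and the last one is immediate from the pointwise bound $|\Delta u|^2 \le n |\nabla^2 u|^2$, so all the content is in the first inclusion.

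To prove $H^{2,2}(M)\subseteq W^{2,2}_0(M)$, I would fix $\varphi\in H^{2,2}(M)$ and invoke the density result of Milatovic recalled in \Cref{rmk:CZ(p) implies equality of sobolev} (see \cite[Appendix A]{GP2019}) to obtain a sequence $\{\varphi_k\}\subseteq C^\infty_0(M)$ with $\varphi_k\to \varphi$ in the $H^{2,2}$-norm. In particular $\{\varphi_k\}$ is $H^{2,2}$-Cauchy.

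The key step is to upgrade this to a $W^{2,2}$-Cauchy statement. Applying \Cref{thm:CZ(2) on CH} to the differences $\varphi_k-\varphi_j\in C^\infty_0(M)$ yields
\[
\|\nabla^2(\varphi_k-\varphi_j)\|_{L^2} \le C\bigl(\|\Delta(\varphi_k-\varphi_j)\|_{L^2}+\|\varphi_k-\varphi_j\|_{L^2}\bigr),
\]
which controls the Hessian term. The gradient term is handled by the standard $L^2$-gradient estimate $\|\nabla u\|_{L^2}^2\le \|u\|_{L^2}\|\Delta u\|_{L^2}$ (integration by parts plus Cauchy--Schwarz, valid on any complete manifold for $C^\infty_0$ functions, cf.\ \cite[Proposition 3.10b]{GP2015}), applied to $u=\varphi_k-\varphi_j$. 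Together these give
\[
\|\varphi_k-\varphi_j\|_{W^{2,2}}\le C'\|\varphi_k-\varphi_j\|_{H^{2,2}},
\]
so $\{\varphi_k\}$ is Cauchy in the complete space $W^{2,2}(M)$ and converges there to some $\bar\varphi\in W^{2,2}_0(M)$.

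Finally, the continuous embedding $W^{2,2}(M)\hookrightarrow H^{2,2}(M)$ forces $\bar\varphi=\varphi$ (both are $H^{2,2}$-limits of the same sequence), so $\varphi\in W^{2,2}_0(M)$, concluding the proof. There is no real obstacle here, since all the ingredients are in place: the nontrivial input is precisely the Calder\'on--Zygmund inequality of \Cref{thm:CZ(2) on CH}; Milatovic's approximation and the gradient estimate are classical.
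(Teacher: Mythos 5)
Your proof is correct and follows essentially the same route as the paper, which deduces the corollary from \Cref{rmk:CZ(p) implies equality of sobolev}: Milatovic's $H^{2,2}$-approximation by $C^\infty_0$ functions, the Calder\'on--Zygmund inequality of \Cref{thm:CZ(2) on CH} together with the $L^2$-gradient estimate to upgrade the Cauchy property from $H^{2,2}$ to $W^{2,2}$, and the continuous embedding $W^{2,2}(M)\subseteq H^{2,2}(M)$ to identify the limit. Your only (correct and mildly sharper) variation is to run the argument for an arbitrary $\varphi\in H^{2,2}(M)$ rather than $\varphi\in W^{2,2}(M)$, which is indeed what is needed to close the full chain of inclusions including $H^{2,2}(M)$.
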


\section{\texorpdfstring{$L^p$}{L^p}-positivity preserving and the BMS conjecture}
\label{sec:L^p pp and BMS}

This last section is devoted to the study of the $L^p$-positivity preserving property, and BMS conjecture, on a certain class of manifolds. 
Following \cite[Theorem B.1]{BMS02}, we first prove. 
\begin{lemma}
\label{lem:Lp pp smooth}
Let $\phi \in C^\infty_0(M)$, $\phi \geq 0$, then there exists a unique $v \in C^\infty (M) \cap L^p(M)$ $\forall p\in[1,+\infty]$, $v > 0$, such that
\begin{equation}
    \label{eq:Lp pp smooth}
    (-\Delta + 1)v = \phi.
\end{equation}

\begin{proof}
Let $\{\Omega_k\}$ be an exhaustion of $M$ by relatively compact, open sets of smooth boundary satisfying 
\begin{equation*}
    \Omega_1 \Subset \Omega_2 \Subset \cdots \Subset \Omega_k \Subset \Omega_{k+1}\Subset \cdots,
\end{equation*}
that is, $\Omega_k$ is relatively compact in $\Omega_{k+1}$ for all $k \in \mathbb{N}$.
Furthermore, assume $\Omega_1$ is large enough so that $\supp(\phi) \subseteq \Omega_1$. 
Then let $v_k$ be a smooth solution of the following Dirichlet problem:
\begin{equation}
\label{eq:dirichlet problem Lppp}
    \begin{cases}
    (-\Delta + 1)v_k = \phi &\text{ on }\Omega_k\\
    v_k = 0 &\text{ on }\partial\Omega_k.
    \end{cases}
\end{equation}
By the strong maximum principle (\cite[Theorem 3.5]{GT2001}), we immediately get $v_k > 0$ in the interior of $\Omega_k$ and $v_{k+1}\geq v_k$ for all $k$, hence, $\{v_k\}$ is a monotone increasing sequence of functions and thus admits a (possibly infinite) pointwise limit
\begin{equation*}
    0 < v(x) = \lim_{k \to + \infty} v_k(x).
\end{equation*}
Next we prove that $v$ is actually everywhere finite, smooth and belongs to $L^p(M)$ for any $p \in [1, + \infty)$. 
To this end, we multiply \eqref{eq:dirichlet problem Lppp} by $v_k^{p-1}$ and integrate over $\Omega_k$
\begin{align*}
    \int_{\Omega_k} v_k^{p-1} (-\Delta + 1)v_k dV_g &= \int_{\Omega_k} v_k^p dV_g -  \int_{\Omega_k} v_k^{p-1}\Delta v_k dV_g  \\
    &= \int_{\Omega_k} v_k^p dV_g + \int_{\Omega_k} \langle\nabla v_k^{p-1}, \nabla v_k \rangle dV_g \\
    &= \int_{\Omega_k} v_k^p dV_g + (p-1)\int_{\Omega_k} v_k^{p-2} |\nabla v_k|^2 dV_g \geq \int_{\Omega_k} v_k^p dV_g. 
\end{align*}
On the other hand, by H\"older's inequality
\begin{equation*}
    \int_{\Omega_k} v_k^{p-1} (-\Delta + 1)v_k dV_g = \int_{\Omega_k} v_k^{p-1} \phi dV_g \leq \left\lbrace\int_{\Omega_k} \phi^p dV_g\right \rbrace^{\frac{1}{p}} \left\lbrace\int_{\Omega_k} v_k^p dV_g\right \rbrace^{\frac{p-1}{p}}
\end{equation*}
hence $\Vert v_k \Vert_{L^p(\Omega_k)} \leq \Vert \phi \Vert_{L^p(M)}$. 
Since $\{v_k\}$ is uniformly bounded in $L^p$ on any compact set, by standard interior regularity we deduce that $\{v_k\}$ is uniformly bounded in $W^{h, p}_{\operatorname{loc}}(M)$ for any order $h$ and $p \in [1, + \infty)$.
As a consequence of Sobolev spaces compact embedding, all the covariant derivatives of $\{ v_k\}$ converge up to a subsequence uniformly on compact sets, i.e., $v_k$ converges in $C^\infty(M)$ topology. 
In particular $v$ is positive, smooth and satisfies \eqref{eq:Lp pp smooth}. Moreover, by Fatou's lemma we also have that $v \in L^p(M)$ for any $p \in [1, +\infty)$.  
For $p = +\infty$, let $x^\ast$ be such $v_k(x^\ast) = \max_{\Omega_K} v_k$, by the maximum principle we get $v_k(x^\ast) \leq \phi(x^\ast)\leq  \Vert \phi \Vert_{L^\infty(M)}$, that is, $\Vert v_k \Vert_{L^\infty(\Omega_K)} \leq \Vert \phi \Vert_{L^\infty(M)}$. Letting $k \to +\infty$ we get $v \in L^\infty(M)$. 
\end{proof}
\end{lemma}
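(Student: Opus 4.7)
The plan is to construct $v$ via a monotone exhaustion procedure, then use a uniform $L^p$-estimate plus elliptic regularity to pass to the limit. Concretely, I would fix a smooth exhaustion $\Omega_1\Subset\Omega_2\Subset\cdots$ of $M$ with $\operatorname{supp}\phi\subset\Omega_1$, and on each $\Omega_k$ solve the Dirichlet problem $(-\Delta+1)v_k=\phi$ with $v_k|_{\partial\Omega_k}=0$. The coercive operator $-\Delta+1$ on the bounded domain $\Omega_k$ admits a unique smooth solution, and the strong maximum principle (valid because the zero-order coefficient has the correct sign) gives $v_k>0$ in the interior; comparing $v_{k+1}$ and $v_k$ on $\Omega_k$ via the same principle yields monotonicity $v_{k+1}\ge v_k$. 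Hence $v(x)=\lim_k v_k(x)$ exists pointwise in $[0,+\infty]$ and is positive on $\operatorname{supp}\phi$.

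The core estimate is a uniform $L^p$-bound. For $p\in[1,\infty)$ I would multiply the equation by $v_k^{p-1}$, integrate over $\Omega_k$, and use the zero boundary condition together with integration by parts to get
\begin{equation*}
\int_{\Omega_k} v_k^p\,dV_g + (p-1)\int_{\Omega_k} v_k^{p-2}|\nabla v_k|^2\,dV_g = \int_{\Omega_k} v_k^{p-1}\phi\,dV_g.
\end{equation*}
Discarding the nonnegative gradient term and applying H\"older to the right-hand side gives $\|v_k\|_{L^p(\Omega_k)}\le \|\phi\|_{L^p(M)}$, uniformly in $k$. For $p=\infty$, the weak maximum principle applied to $v_k - \|\phi\|_\infty$ directly gives $\|v_k\|_{L^\infty(\Omega_k)}\le\|\phi\|_{L^\infty(M)}$ (any positive interior maximum $x^*$ would satisfy $v_k(x^*)\le\phi(x^*)$).

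Next I would upgrade these integral bounds to smooth convergence. Since the sequence is uniformly bounded in $L^p_{\mathrm{loc}}$ for every $p$ and $\phi$ is smooth, standard interior elliptic regularity for $-\Delta+1$ gives uniform bounds in $W^{h,p}_{\mathrm{loc}}$ for every $h$, hence by Sobolev embedding in $C^h_{\mathrm{loc}}$. A diagonal subsequence extraction then yields $C^\infty_{\mathrm{loc}}$-convergence to a smooth limit, which must coincide with the pointwise monotone limit $v$. In particular $v$ is finite and smooth, solves $(-\Delta+1)v=\phi$ classically, and Fatou's lemma transfers the uniform $L^p$-bounds to $v$ for all $p\in[1,+\infty]$.

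For uniqueness I would argue that if $w$ is any other nonnegative $L^p$-solution, then $u:=v-w$ lies in $L^p$ and satisfies $(-\Delta+1)u=0$; approximating by the same Dirichlet exhaustion (solving $(-\Delta+1)u_k=0$ on $\Omega_k$ with $u_k=u$ on $\partial\Omega_k$) and comparing with $\pm\|u\|_\infty$ via the maximum principle forces $u\equiv 0$, at least in the $L^\infty$ regime which is what is ultimately needed downstream. The main technical obstacle I anticipate is not any single step but rather verifying that the various bounds (pointwise monotone, $L^p$, and $L^\infty$) all fit together cleanly so that the limit $v$ is genuinely finite everywhere and belongs to every $L^p$ simultaneously; this is ensured by the uniformity of the bound $\|v_k\|_{L^p}\le\|\phi\|_{L^p}$ across $k$ combined with interior $C^\infty$ estimates, rather than by any single subtle analytic ingredient.
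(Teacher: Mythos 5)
Your proposal is correct and follows essentially the same route as the paper: exhaustion by Dirichlet problems, positivity and monotonicity via the maximum principle, the uniform bound $\Vert v_k\Vert_{L^p}\le\Vert\phi\Vert_{L^p}$ obtained by testing against $v_k^{p-1}$ and applying H\"older, interior elliptic regularity plus compact embeddings for $C^\infty_{\mathrm{loc}}$ convergence, Fatou for the $L^p$ bounds, and the weak maximum principle for $p=\infty$. The only difference is that you sketch a uniqueness argument (which the paper's proof in fact omits), so if anything your write-up is slightly more complete on that point.
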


We would like now to extend the above result to the case where $(-\Delta+1)v$ is positive Radon measure. 
To do so, we first need the following $L^q$-gradient estimate which is a simple extension of a result by
T. Coulhon and X. T. Duong, \cite{CD2003}. 

\begin{lemma}
\label{lem:Lp gradient estimates}
Let $(M, g)$ be a complete Riemannian manifold, then for all $1<q\le 2$ there exists a constant $C>0$ such that
\begin{equation}
    \label{eq:Lp gradient estimate}
    \Vert \nabla u \Vert_{L^q} \leq C(\Vert u \Vert_{L^q} + \Vert \Delta u \Vert_{L^q})
\end{equation}
for all $u \in C^\infty(M) \cap H^{2,q}(M)$.
\begin{proof}
The validity of \eqref{eq:Lp gradient estimate} on $C^\infty_0(M)$ is known thanks to a result by T. Coulhon and X. T. Duong \cite{CD2003}. 
Thanks to a result by O. Milatovic \cite[Appendix A]{GP2019}, for all $u \in C^\infty(M) \cap H^{2,q}(M)$ there exists a sequence of functions $\{u_k\} \subseteq C^\infty_0(M)$ such that $u_k \to u$ with respect to the $H^{2,q}(M)$ norm.
Applying \eqref{eq:Lp gradient estimate} to the Cauchy differences we deduce that $\nabla u_k \to \nabla u$ in $L^q(M)$. 
Then we obtain the desired result applying \eqref{eq:Lp gradient estimate} to $u_k$ and taking the limit. 
\end{proof}
\end{lemma}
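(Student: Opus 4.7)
The plan is to reduce from the general class $C^\infty(M)\cap H^{2,q}(M)$ to compactly supported smooth functions, on which the desired estimate is already known, via a density-plus-completeness argument.

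First, I would recall that the Coulhon--Duong result \cite{CD2003} establishes \eqref{eq:Lp gradient estimate} for all $u\in C^\infty_0(M)$ and every $q\in(1,2]$, without any curvature assumption. This is the analytic heart of the estimate; no extra work is needed here.

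Second, to extend the inequality to the larger class, I would invoke the density theorem of Milatovic (\cite[Appendix A]{GP2019}), which guarantees that given any $u\in C^\infty(M)\cap H^{2,q}(M)$ there exists a sequence $\{u_k\}\subseteq C^\infty_0(M)$ with $u_k\to u$ in the $H^{2,q}$-norm, i.e.\ both $u_k\to u$ and $\Delta u_k\to \Delta u$ in $L^q(M)$. The standard concern here is that $H^{2,q}$-convergence does not a priori control $\nabla u_k$; however, applying the Coulhon--Duong inequality to the Cauchy differences gives
\begin{equation*}
\|\nabla(u_k-u_j)\|_{L^q}\le C\bigl(\|u_k-u_j\|_{L^q}+\|\Delta(u_k-u_j)\|_{L^q}\bigr)\xrightarrow[k,j\to\infty]{} 0,
\end{equation*}
so $\{\nabla u_k\}$ is Cauchy in $L^q$ and converges to some $L^q$ vector field $V$. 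By uniqueness of distributional limits, $V=\nabla u$ almost everywhere; hence $\nabla u_k\to\nabla u$ in $L^q(M)$.

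Finally, writing \eqref{eq:Lp gradient estimate} for each $u_k$ and passing to the limit in all three $L^q$-terms yields the inequality for $u$, with the same constant $C$. The only potentially delicate point is the distinction between convergence in the sense of distributions and strong $L^q$-convergence of the gradients, and this is precisely what the Cauchy-difference trick bypasses. Everything else is routine once Coulhon--Duong and Milatovic's density statement are available.
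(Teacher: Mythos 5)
Your proposal is correct and follows the same route as the paper: Coulhon--Duong on $C^\infty_0(M)$, Milatovic's density in $H^{2,q}$, the Cauchy-difference trick to get strong $L^q$-convergence of the gradients, and passage to the limit. The only addition is your explicit identification of the limit of $\nabla u_k$ with $\nabla u$ via distributional uniqueness, which the paper leaves implicit.
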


\begin{theorem}
\label{thm:L^p pp Cartan hadamard}
Let $(M, g)$ be a Cartan-Hadamard manifold satisfying
\begin{equation*}
    - (n-1)B^2 r^{\alpha+2}(x) \leq \Ric_o(x) \leq -2(n-1)^2A^2 r^{\alpha}(x) \quad \forall x \in M\setminus B_{R_0}
\end{equation*}
for some constants $B>\sqrt{2}(n-1)A>0$ and some $\alpha, R_0 > 0$.
Then, $M$ is $L^p$-positivity preserving for all $2 \le p < +\infty$
\end{theorem}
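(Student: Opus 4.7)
The plan is to establish $\int_M u\phi \, dV_g \geq 0$ for every non-negative $\phi \in C^\infty_0(M)$, which by the distributional definition of $(-\Delta+1)u \geq 0$ is enough to conclude $u \geq 0$. Fix such a $\phi$ and set $q := p/(p-1) \in (1,2]$. The overall strategy is the one pioneered in \cite{BMS02} and \cite{Gu17}: invert $(-\Delta+1)$ on $\phi$ to obtain a positive smooth $v$, and transfer positivity by formal duality, the key point being to make the integration by parts rigorous.

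First I would apply \Cref{lem:Lp pp smooth} to produce $v \in C^\infty(M) \cap L^s(M)$ for every $s \in [1,+\infty]$, with $v>0$ and $(-\Delta+1)v = \phi$. In particular $v \in L^q(M)$, and since $\Delta v = v - \phi \in L^q(M)$, this places $v \in H^{2,q}(M)$. \Cref{lem:Lp gradient estimates} then promotes this to $\nabla v \in L^q(M)$, so in fact $v \in W^{1,q}(M)$.

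Next, using that $\Ric \geq -(n-1)B^2 r^{\alpha+2}$ (i.e. the lower-bound exponent is $\beta = \alpha+2$), I would take the Hessian cutoffs $\{\chi_R\}$ from \Cref{lem:existence of cutoffs}, which satisfy $|\nabla\chi_R| \leq C_1/R$ and $|\nabla^2\chi_R| \leq C_2 R^{\alpha/2}$. Since $\chi_R v \geq 0$ lies in $C^\infty_0(M)$, testing the distributional hypothesis against it and expanding the Laplacian product rule gives
\begin{equation*}
0 \leq \int_M u (-\Delta+1)(\chi_R v)\, dV_g = \int_M u\chi_R\phi\, dV_g - 2\int_M u\,\langle\nabla\chi_R,\nabla v\rangle\, dV_g - \int_M uv\, \Delta\chi_R\, dV_g.
\end{equation*}
I would then let $R\to\infty$. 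For $R$ so large that $\chi_R \equiv 1$ on $\supp(\phi)$, the first term already equals $\int_M u\phi\, dV_g$; the gradient term is dominated by $(C_1/R)\|u\|_{L^p(M)}\|\nabla v\|_{L^q(M)}$ by H\"older and vanishes.

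The delicate step, and the main obstacle, is controlling the Laplacian term, since $|\Delta\chi_R| \leq n|\nabla^2\chi_R|$ grows like $R^{\alpha/2}$ and does not remain uniformly bounded. The crucial observation is that on the annular support $B_{2R}\setminus B_R$ one has $R \leq r$, so the weight $\omega_R := |\Delta\chi_R|$ satisfies $\omega_R \leq C\, r^{\alpha/2}$ globally and $\supp(\omega_R) \subseteq M\setminus \overline{B}_R$. By H\"older the term is bounded by $\|u\|_{L^p}\|\omega_R v\|_{L^q}$, and \Cref{rmk:Hardy for L^p pp}, applied to $v\in W^{1,q}(M)$ with the exact weight $r^{\alpha/2}$, forces $\|\omega_R v\|_{L^q}\to 0$. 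This is precisely why the hypothesis $\beta = \alpha+2$ is natural: the growth rate $R^{\beta/2-1} = R^{\alpha/2}$ of the Hessian cutoffs matches exactly the weight that the first-order Hardy-type inequality driven by the Ricci upper bound $-ar^\alpha$ is able to absorb against a $W^{1,q}$ function. Passing to the limit then yields $\int_M u\phi\, dV_g \geq 0$, which being valid for every non-negative $\phi \in C^\infty_0(M)$ completes the proof.
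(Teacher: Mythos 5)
Your proposal is correct and follows essentially the same route as the paper: invert $(-\Delta+1)$ on $\phi$ via \Cref{lem:Lp pp smooth}, get $\nabla v\in L^q$ from \Cref{lem:Lp gradient estimates}, test against $\chi_R v$ with the Hessian cutoffs of \Cref{lem:existence of cutoffs}, and absorb the $\int uv\,\Delta\chi_R$ term using \Cref{rmk:Hardy for L^p pp} with the weight $r^{\alpha/2}$. The only (harmless) cosmetic difference is that you combine $-u\chi_R\Delta v+u\chi_R v$ into $u\chi_R\phi$ at the outset rather than passing each piece to the limit by dominated convergence.
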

\begin{proof}
Let $u \in L^p(M)$ such that $(-\Delta + 1)u \geq 0$ in the sense of distributions.
In order to prove $L^p$-positivity preserving, we need to show that
\begin{equation*}
    \int_M \phi u dV_g \geq 0 \qquad \forall \phi \in C^\infty_0(M), \phi \geq 0.
\end{equation*}
By \Cref{lem:Lp pp smooth}, let $v\in C^\infty(M)$, $v > 0$ such that $(-\Delta + 1)v = \phi$ and let $\{\chi_R\}\in C^\infty(M)$ be a family of cutoffs as in \Cref{lem:existence of cutoffs}.
Since $v\chi_R \in C^\infty_0(M)$, $v\chi_R \geq 0$ we have
\begin{align*}
    0 \leq \int_M u (-\Delta +1)(v\chi_R) dV_g =& \int_M \left[-u \Delta(v\chi_R) + v\chi_R u \right]dV_g \\
    =& - \int_{M} u \chi_R \Delta v dV_g -  \int_{M} u v \Delta \chi_R  dV_g \\ 
    &- \int_{M} u \langle \nabla \chi_R, \nabla v\rangle dV_g + \int_{M} u \chi_R v dV_g. 
\end{align*}
Recall that $v, \Delta v\in L^q(M)$ for all $q \in [1, +\infty]$, in particular, this holds for $q = \frac{p}{p-1}$ so that $uv \in L^1(M)$ and $u\Delta v \in L^1(M)$.
By dominated convergence we conclude that
\begin{equation*}
    \int_{M} u \chi_R \Delta v dV_g \to \int_{M} u \Delta v dV_g, \qquad \int_{M} u \chi_R v dV_g \to \int_{M} u v dV_g
\end{equation*}
for $R \to + \infty$. 
On the other hand, by \Cref{lem:Lp gradient estimates} we have $|\nabla v| \in L^q(M)$ so that $u\nabla v \in L^1(M)$, by dominated convergence we conclude that
\begin{equation*}
    \int_{M} u \langle \nabla \chi_R, \nabla v\rangle dV_g \leq  \int_{M} |u| |\nabla v| |\nabla \chi_R| dV_g \to 0
\end{equation*}
for $R \to +\infty$. 
Finally, by Holder's inequality we have 
\begin{equation*}
    \left| \int_{M} u v \Delta \chi_R  dV_g \right| \leq \left\lbrace\int_{M} |u|^p  dV_g\right\rbrace^{\frac{1}{p}}\left\lbrace\int_{M} |v\Delta \chi_R|^q   dV_g\right\rbrace^{\frac{1}{q}}. 
\end{equation*}
The lower bound on Ricci implies that $|\Delta \chi_R| \leq C r^{\frac{\alpha}{2}}(x)$ and $v \in W^{1, q}(M)$, hence, by \Cref{rmk:Hardy for L^p pp} we have
\begin{equation*}
   \int_{M} |v\Delta \chi_R|^q   dV_g \to 0
\end{equation*}
as $R \to + \infty$. 
In conclusion, we have proved that  
\begin{equation*}
    \int_M \phi u dV_g = \lim_{R \to + \infty}\int_M u (-\Delta +1)(v\chi_R) dV_g \geq 0,
\end{equation*}
hence, $u \geq 0$ in the sense of distributions. 
\end{proof}

Note that, although \Cref{lem:Lp pp smooth} holds on the whole $L^p$ scale, the case $p = +\infty$ and $1\leq p < 2$ have been left out in the previous theorem.
The difficulty in these situations is that we generally lack the $L^q$-gradient estimates where $q>2$ is the conjugate exponent of $p$. 
On the other hand, an $L^1$ gradient estimate which corresponds to the case $p = +\infty$ is false even in the Euclidean setting. 
\begin{remark}
\label{rmk:stoch compl}
Recall that the $L^\infty$-positivity preserving property implies stochastic completeness of the manifold at hand. 
Indeed, $(M,g)$ is stochastically complete if the only non-negative, bounded $C^2$ solution of
\begin{equation*}
    \Delta u = u
\end{equation*}
is $u \equiv 0$. 
We refer to \cite[Section 6]{Gr1999} for a survey of the equivalent definitions of stochastic completeness. 
Indeed, if $u \in C^2(M)\cap L^\infty(M)$, $u \geq 0$ solves $\Delta u = u$ then $-u$ solves $-\Delta (-u) -u \geq 0$. 
By $L^\infty$-positivity preserving we deduce that $u \leq 0$ hence $u \equiv 0$.
As a matter of fact, when the Ricci curvature is below a certain critical growth we can prove that $(M, g)$ looses stochastic completeness, hence, the $L^\infty$-positivity preserving property needs to fail. 
\end{remark}

\begin{theorem}
\label{thm:comparison stochastic completeness}
Let $(M, g)$ be a Cartan-Hadamard manifold satisfying 
\begin{equation*}
    \Ric_o(x) \le -2(n-1)^2A^2 r^\alpha(x) \quad \forall x \in M \setminus B_{R_0}
\end{equation*}
with $A, \alpha, R_0>0$. If $\alpha>2$, then $(M, g)$ is not stochastically complete.
\begin{proof}
Let $j$ be as in \eqref{eq:warping function model} and define
\begin{equation*}
    v(t) = \int_0^t j^{(1-n)}(s) \left(\int_0^s j^{(n-1)}(\tau) d\tau\right)ds
\end{equation*}
then $u(x) = v(r(x))$ is a $C^2$ function on $M$. 
By \eqref{eq:asymptotic stoch compl} we have 
\begin{equation*}
    \frac{\int_0^s j^{n-1}(\tau) d\tau}{j^{(n-1)}(s)} \in L^1(+\infty), 
\end{equation*}
hence, $u$ is bounded. 
Since $v' \geq 0$, by \Cref{prop:comparison CH} we have 
\begin{equation*}
    \Delta u(x) = v''(r(x)) + \Delta r(x) v'(r(x)) \geq v''(r(x)) + (n-1)\frac{j'(r(x))}{j(r(x))} v'(r(x))
\end{equation*}
for $r>>1$. 
By direct computation this implies that $\Delta u \geq 1$ outside a compact set and in particular, there cannot be a sequence of points $\{x_k\}\subset M$ such that $u(x_k)$ converges to $\sup_M u$ and $\Delta u(x_k) < 1/k$ which is an equivalent formulation of stochastic completeness, see \cite{PRS2003}. 
\end{proof}
\end{theorem}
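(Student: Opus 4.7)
The plan is to violate stochastic completeness by constructing a bounded non-constant function $u\in C^2(M)$ with $\Delta u \geq 1$ outside a compact set. Indeed, as recorded in \cite{PRS2003}, stochastic completeness admits the following equivalent formulation: $M$ is stochastically complete iff for every $u\in C^2(M)$ bounded above there exists $\{x_k\}\subset M$ with $u(x_k)\to \sup_M u$ and $\Delta u(x_k)<1/k$. Any $u$ as above would produce the desired contradiction.

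The natural candidate is obtained by lifting a radial solution of $\widetilde{\Delta} w = 1$ from the model manifold $(\widetilde M,\widetilde g)$ of Subsection \ref{subs:model manifold}. I would set
\begin{equation*}
v(t) := \int_0^t j^{1-n}(s)\left(\int_0^s j^{n-1}(\tau)\,d\tau\right)ds, \qquad u(x) := v(r(x)),
\end{equation*}
with $j$ as in \eqref{eq:warping function model}. Differentiating $(j^{n-1}v')' = j^{n-1}$ yields the ODE identity $v''(s) + (n-1)(j'(s)/j(s))v'(s) = 1$ for $s>0$. Moreover $v'(s) = j^{1-n}(s)\int_0^s j^{n-1}(\tau)\,d\tau>0$, so $v$ is strictly increasing, and by the asymptotic \eqref{eq:asymptotic stoch compl} one has $v'(s)\sim D_4 s^{-\alpha/2}$ as $s\to\infty$. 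The hypothesis $\alpha>2$ makes $v'$ integrable at infinity, so $v(\infty):=\lim_{t\to\infty}v(t)<\infty$; hence $u\in C^2(M)\cap L^\infty(M)$, and since $v$ is strictly increasing the value $\sup_M u = v(\infty)$ is not attained anywhere on $M$.

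Using $v'\geq 0$ and Proposition \ref{prop:comparison CH} (where the Ricci upper bound enters), for $r(x)$ large enough
\begin{equation*}
\Delta u(x) = v''(r(x)) + v'(r(x))\,\Delta r(x) \geq v''(r(x)) + (n-1)\frac{j'(r(x))}{j(r(x))}v'(r(x)) = 1,
\end{equation*}
so $\Delta u \geq 1$ on $M\setminus K$ for some compact $K$. Any maximizing sequence $\{x_k\}$ with $u(x_k)\to\sup u$ must then satisfy $r(x_k)\to\infty$ (otherwise $\sup u$ would be attained), so $x_k\notin K$ eventually, forcing $\Delta u(x_k)\geq 1$ for all large $k$. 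This rules out the characterization recalled above, and $M$ cannot be stochastically complete.

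The only genuinely delicate step is the construction of $v$: one needs both strict monotonicity (to guarantee that $\sup u$ is not attained, so maximizing sequences are forced out to infinity) and the condition $\alpha>2$ (to force integrability of $v'$ at infinity, so that $u$ is bounded). The ODE identity is a one-line check and the Laplacian estimate is an immediate application of the comparison theorem already proved in Section \ref{sec:CH estimates}, so no further obstacle arises.
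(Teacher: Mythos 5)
Your proposal is correct and follows essentially the same route as the paper: the same auxiliary function $v$, the same Laplacian comparison via Proposition \ref{prop:comparison CH}, and the same appeal to the weak maximum principle characterization of stochastic completeness from \cite{PRS2003}. You actually spell out two details the paper leaves implicit — the ODE identity $(j^{n-1}v')'=j^{n-1}$ and the fact that strict monotonicity of $v$ forces any maximizing sequence to escape every compact set — which makes the final contradiction fully explicit.
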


It remains to investigate the subquadratic case.
In this setting the cut-off functions constructed in \Cref{lem:existence of cutoffs} have a much better behavior, namely, the Hessian is uniformly bounded. 
As a consequence, one can easily avoid the use of the Hardy-type inequality to control the term containing $\Delta \chi_R$. 
It turns out that such Laplacian cut-off functions exist on arbitrary complete Riemannian manifolds as long as the Ricci curvature satisfies
\begin{equation}
    \label{eq:ricci lambda}
    \Ric(x) \geq - \lambda^2(r(x)) \quad \forall x \in M \setminus B_{R_0}.
\end{equation}
Here $\lambda$ is a $C^\infty$ function given by
\begin{equation}
    \label{eq:lambda}
    \lambda(t) = \alpha t \prod_{j = 0}^k \log^{[j]}(t)
\end{equation}
for $t$ large enough, where $\alpha>0$, $k\in\mathbb{N}$ and $\log^{[j]}(t)$ stands for the $j$-th iterated logarithm. 
The following is a joint result of the second author with D. Impera and M. Rimoldi \cite[Corollary 4.1]{IRV2020}, which slightly generalizes \cite[Corollary 2.3]{BS2018}.

\begin{theorem}
\label{thm:cutoffs of IRV}
Let $(M, g)$ be a complete Riemannian manifold satisfying \eqref{eq:ricci lambda} in the sense of quadratic forms. 
Then, there exists a family of smooth cut-off functions $\{\chi_R\}\subseteq C^\infty_0(M)$, $R > R_0$, such that
\begin{enumerate}
    \item $\chi_R \equiv 1$ on $B_R$ and $\chi_R \equiv 0$ on $M \setminus \overline{B_{\gamma R}}$;
    \item $|\nabla \chi_R|\leq \frac{C_1}{\lambda(R)}$;
    \item $|\Delta \chi_R|\leq C_2$;
\end{enumerate}
where $C_1, C_2 > 0$, $\gamma > 1$ and $\lambda$ is the function defined in \eqref{eq:lambda}. 
\end{theorem}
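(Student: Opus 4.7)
The plan is to follow the strategy of \cite{BS2018} (and its extension to iterated-logarithm bounds in \cite{IRV2020}), which constructs the cut-offs by composing a smoothly-rescaled distance function with an appropriately tailored real cutoff. The three conclusions correspond to carefully balancing the fast growth of $\lambda$ against the size of the annular support of $\chi_R$.

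\textbf{Step 1 (Laplacian comparison).} Since $\Ric \geq -\lambda^2(r)$ in the sense of quadratic forms, the classical Laplacian comparison (interpreted in the barrier sense, so that the argument is insensitive to the cut locus of $o$) yields
\begin{equation*}
\Delta r(x) \leq (n-1)\frac{h'(r(x))}{h(r(x))} \leq C\,\lambda(r(x))
\end{equation*}
for $r(x) \geq R_0$, where $h$ solves $h'' = \lambda^2 h$ with $h(0)=0$, $h'(0)=1$. The asymptotic $h'/h \sim \lambda$ follows from an ODE analysis analogous to \Cref{subs:model manifold}.

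\textbf{Step 2 (rescaled distance).} Introduce the function $\Psi(t) = \int_{R_0}^{t} ds/\lambda(s)$ for $t \geq R_0$, which is well-defined since $\lambda$ is positive and continuous there. It satisfies $\Psi'(t) = 1/\lambda(t)$ and $\Psi''(t) = -\lambda'(t)/\lambda^2(t) = o(1)$. Using a smoothing of $r$ (via Greene--Wu, or directly replacing $r$ by a Lipschitz function $\tilde r$ with $|\tilde r - r| \leq 1$ and the same bounds on its distributional Laplacian), define $\rho(x) \coloneqq \Psi(\tilde r(x))$. A direct computation combined with Step 1 gives, outside a compact set,
\begin{equation*}
|\nabla \rho| = \frac{|\nabla \tilde r|}{\lambda(\tilde r)} \leq \frac{1}{\lambda(r)}, \qquad |\Delta \rho| \leq \frac{|\Delta \tilde r|}{\lambda(r)} + \frac{|\lambda'(r)|}{\lambda^2(r)} \leq C,
\end{equation*}
so $\rho$ has uniformly bounded Laplacian and vanishing gradient at infinity.

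\textbf{Step 3 (real cutoff).} Fix a smooth $\phi : \R \to [0,1]$ with $\phi \equiv 1$ on $(-\infty,0]$, $\phi \equiv 0$ on $[1,+\infty)$ and $\|\phi'\|_\infty + \|\phi''\|_\infty \leq A$. For $R > R_0$, choose $\gamma > 1$ (fixed, independent of $R$, obtainable from the homogeneity properties of $\lambda$ at infinity) and set
\begin{equation*}
\chi_R(x) \coloneqq \phi\!\left(\frac{\rho(x) - \Psi(R)}{\Psi(\gamma R) - \Psi(R)}\right)
\end{equation*}
on $\{r \geq R_0\}$, extended by $1$ on a neighborhood of $\overline{B_{R_0}}$ via a partition of unity. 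Then $\chi_R \equiv 1$ on $B_R$ and $\chi_R \equiv 0$ on $M\setminus \overline{B_{\gamma R}}$.

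\textbf{Step 4 (final bounds).} Differentiating and using $\Psi(\gamma R)-\Psi(R) = \int_R^{\gamma R} ds/\lambda(s) \sim (\gamma -1)R/\lambda(R)$ (which is the key asymptotic identity for iterated-logarithm $\lambda$), we obtain
\begin{equation*}
|\nabla \chi_R| \leq \frac{\|\phi'\|_\infty}{\Psi(\gamma R)-\Psi(R)}\,|\nabla \rho| \leq \frac{C_1}{\lambda(R)},
\end{equation*}
while
\begin{equation*}
|\Delta \chi_R| \leq \frac{\|\phi''\|_\infty |\nabla\rho|^2}{(\Psi(\gamma R)-\Psi(R))^2} + \frac{\|\phi'\|_\infty |\Delta \rho|}{\Psi(\gamma R)-\Psi(R)} \leq C_2,
\end{equation*}
where in the second inequality we use that $|\nabla \rho|^2 \lesssim 1/\lambda(R)^2$ while $(\Psi(\gamma R)-\Psi(R))^2 \gtrsim R^2/\lambda(R)^2$, so the $\phi''$-term is $O(1/R^2) = o(1)$, and the $\phi'$-term is bounded by $\|\phi'\|_\infty \cdot C \lambda(R)/((\gamma-1)R) \cdot \lambda(R)^{-1}/(\lambda(R)/R)$—this telescopes to a constant because $\lambda$ scales almost linearly.

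\textbf{Main obstacle.} The delicate point is the precise asymptotic comparison $\Psi(\gamma R) - \Psi(R) \asymp R/\lambda(R)$, which is what forces the Laplacian bound to close up at the correct scale. For $\lambda(t) = \alpha t\prod_{j=0}^{k}\log^{[j]}(t)$ this requires exploiting the slow variation of the iterated logarithms and is exactly the content of \cite[Corollary 4.1]{IRV2020}. A secondary technical hurdle is the non-smoothness of $r$ at the cut locus, which is bypassed either by Greene--Wu smoothing or by working with $r$ as a weak barrier throughout.
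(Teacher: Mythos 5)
You should first be aware that the paper does not prove this statement at all: it is imported verbatim from \cite[Corollary 4.1]{IRV2020} (generalizing \cite[Corollary 2.3]{BS2018}), so there is no internal argument to reconstruct. Your proposal, read on its own merits, has two genuine gaps.

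\emph{The two-sided Laplacian bound is out of reach of your construction.} From $\Ric \geq -\lambda^2(r)$ the Laplacian comparison gives only the \emph{upper} bound $\Delta r \leq C\lambda(r)$ (in the barrier or distributional sense); on a general complete manifold $\Delta r$ admits no lower bound, and Greene--Wu smoothing preserves only one-sided Hessian estimates. Your Step 2 writes $|\Delta\rho| \leq |\Delta\tilde r|/\lambda + \dots \leq C$, which silently invokes a two-sided bound $|\Delta \tilde r| \leq C\lambda(r)$ that is simply not available here. This is precisely what separates the present theorem (arbitrary complete manifolds, conclusion $|\Delta\chi_R|\leq C_2$) from \Cref{lem:existence of cutoffs} of the paper (Cartan--Hadamard, where $\nabla^2 r \geq 0$ is free). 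The proof in \cite{BS2018} does \emph{not} compose a real cutoff with the distance function: it builds the cutoffs out of an auxiliary function whose Laplacian is exactly controlled from both sides (a positive solution of $\Delta v = v$, resp. the heat semigroup), combined with the Li--Yau-type gradient estimate quoted later as \Cref{thm:Li-Yau of Bianchi and Setti}. Without such an ingredient, conclusion (3) cannot be reached.

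\emph{The Step 4 algebra does not produce the stated bounds.} Your own key asymptotic gives $\Psi(\gamma R)-\Psi(R)=\int_R^{\gamma R}\lambda(s)^{-1}\,ds \asymp R/\lambda(R)$, so dividing by it multiplies by $\lambda(R)/R$. Hence
\begin{equation*}
|\nabla\chi_R| \;\leq\; \frac{\|\phi'\|_\infty}{\Psi(\gamma R)-\Psi(R)}\cdot\frac{1}{\lambda(r)} \;\leq\; \frac{C}{R},
\end{equation*}
which is \emph{weaker} than the claimed $C_1/\lambda(R)$ once $\lambda(R)/R\to\infty$; and even granting $|\Delta\rho|\leq C$, the $\phi'$-term of $\Delta\chi_R$ has size $C\lambda(R)/R = C\alpha\prod_j \log^{[j]}(R)\to\infty$, not $O(1)$ --- the ``telescoping'' sentence inserts a spurious factor $R/\lambda(R)$. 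A useful consistency check you should run: any $\chi_R$ dropping from $1$ to $0$ across $B_{\gamma R}\setminus B_R$ satisfies $\sup|\nabla\chi_R|\geq \bigl((\gamma-1)R\bigr)^{-1}$ by the mean value inequality, so the bound $C_1/\lambda(R)$ with $\gamma$ and $C_1$ independent of $R$ is incompatible with a fixed-dilate annulus when $\lambda(R)/R\to\infty$. This tells you that no construction supported on $B_{\gamma R}\setminus \overline{B_R}$ can yield (2) as stated, and that the transition region in the cited sources must be normalized by $\int_R^{S_R}\lambda^{-1}\geq c$ rather than by $S_R=\gamma R$; you need to consult \cite{IRV2020,BS2018} for the exact formulation before the argument can be repaired.
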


Using these cut-off functions instead of the ones constructed in Lemma \ref{lem:existence of cutoffs} allows to drop any topological assumptions on $M$. This was already observed by B.  G\"uneysu in the setting where $L^p$-gradient estimates are available, i.e. for $p\in[2,\infty)$.
However, there is no need to use $L^p$-gradient estimates, since we can use instead a uniform Li-Yau estimate which is a special case of a result by D. Bianchi and A. Setti \cite[Theorem 2.8]{BS2018}.

\begin{theorem}
\label{thm:Li-Yau of Bianchi and Setti}
Let $(M, g)$ be a complete Riemannian manifold satisfying \eqref{eq:ricci lambda} in the sense of quadratic forms. 
Let $R > r > 0$ and let $\gamma > 1$ and let $v : M\setminus\overline{B_r} \to \R$ be a $C^2$ function satisfying
\begin{equation}
    \label{eq:eigenvalue equation Li-Yau}
    \begin{cases}
    v > 0 \quad \text{ on } M\setminus\overline{B_r} \\
    \Delta v = v.
    \end{cases}
\end{equation}
Then, there exists a positive constant $C = C(n, \gamma, B)>0$ such that
\begin{equation}
    \label{eq:Li-yau subquadratic}
    \frac{|\nabla v(x)|}{\lambda(R)} \leq Cv(x) \quad \forall x \in B_{\gamma R}\setminus \overline{B_R}.
\end{equation}
\end{theorem}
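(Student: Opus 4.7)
The plan is to carry out a classical Li--Yau type argument, adapted to the inhomogeneous Ricci lower bound. I would set $f := \log v$ on $M\setminus\overline{B_r}$ so that the equation $\Delta v = v$ becomes $\Delta f + |\nabla f|^2 = 1$; in particular, writing $Q := |\nabla f|^2$, one has $\Delta f = 1-Q$. The Bochner formula applied to $f$, combined with the refined Cauchy--Schwarz inequality $|\nabla^2 f|^2\ge (\Delta f)^2/n$ and the hypothesis $\Ric \ge -\lambda^2(r)$, then yields
\begin{equation*}
\tfrac{1}{2}\Delta Q \;\ge\; \tfrac{(1-Q)^2}{n} - \langle \nabla Q,\nabla f\rangle - \lambda^2(r(x))\, Q
\quad \text{on } M \setminus \overline{B_r}.
\end{equation*}

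To localize, I would pick a smooth cut-off $\varphi \in C^\infty_0(M)$ with $\varphi \equiv 1$ on the annulus $B_{\gamma R}\setminus\overline{B_R}$, supported in $B_{\gamma' R}\setminus\overline{B_{r'}}$ for some $r<r'<R$ and $\gamma'>\gamma$, and satisfying $|\nabla\varphi|\le C_1/\lambda(R)$, $|\Delta\varphi|\le C_2$. Such a $\varphi$ may be constructed by taking a suitable combination of two cut-offs from \Cref{thm:cutoffs of IRV} realized at scales comparable to $R$ and $r'$; here one uses that $\lambda$ is increasing so that the bound $C_1/\lambda(R)$ dominates the analogous bound at scale $r'$. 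Set $P := \varphi^2 Q$; we may assume $P$ attains a positive maximum at an interior point $x_0 \in \supp\varphi$, otherwise the theorem is trivial. At $x_0$ the conditions $\nabla P(x_0) = 0$ and $\Delta P(x_0) \le 0$ give
\begin{equation*}
\varphi \nabla Q = -2 Q\nabla\varphi \quad\text{and}\quad
\varphi^2 \Delta Q \;\le\; 6\,Q|\nabla\varphi|^2 - 2\,Q\varphi\Delta\varphi.
\end{equation*}

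Multiplying the Bochner inequality by $2\varphi^2$, substituting at $x_0$, and applying Young's inequality to the cross term,
\begin{equation*}
|4Q\varphi\langle\nabla\varphi,\nabla f\rangle| \;\le\; \tfrac{1}{n}\varphi^2 Q^2 + 4n\, Q|\nabla\varphi|^2,
\end{equation*}
one absorbs a $Q^2$-term into the favorable $2\varphi^2(1-Q)^2/n$. Dividing by $Q(x_0)>0$ then yields
\begin{equation*}
\tfrac{1}{n}\varphi^2 Q(x_0) \;\le\; C\,\lambda^2(r(x_0))\varphi^2 + C|\nabla\varphi|^2 + C\varphi|\Delta\varphi| + \text{lower order in }Q.
\end{equation*}
Using $r(x_0)\le \gamma' R$, the quasi-monotonicity $\lambda(\gamma' R)\le C\lambda(R)$ (immediate from the explicit form of $\lambda$), and the cut-off bounds, the right-hand side is at most $C\lambda^2(R)$ for $R$ large. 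Since $\varphi\equiv 1$ on $B_{\gamma R}\setminus\overline{B_R}$, this gives $|\nabla f|^2 \le C\lambda^2(R)$ on this annulus, which rewrites as \eqref{eq:Li-yau subquadratic}.

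The main obstacle is the careful tuning of Young's inequality: one must ensure that a strictly positive fraction of the $(1-Q)^2/n$ term survives the absorption of the cross term $\langle \nabla Q,\nabla f\rangle$, so that the resulting inequality closes against the linear growth in $Q$ on the right-hand side. A secondary technical point is arranging $\varphi$ to vanish near $\partial B_r$ (where $v$ is only defined one-sidedly) without sacrificing the optimal bounds on $|\nabla\varphi|$ and $|\Delta\varphi|$, which is what forces the combination of two IRV-type cut-offs at different scales.
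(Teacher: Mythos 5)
Your argument is essentially correct, but it takes a genuinely different route from the paper: the paper does not prove this statement at all, it simply invokes it as a special case of \cite[Theorem 2.8]{BS2018}. What you have written is a self-contained reconstruction of the underlying Li--Yau/Yau maximum-principle argument: the substitution $f=\log v$, the Bochner inequality with $|\nabla^2 f|^2\ge(\Delta f)^2/n$ and $\Ric\ge-\lambda^2(r)$, localization by an annular cut-off, and the evaluation of $\nabla P=0$, $\Delta P\le 0$ at an interior maximum of $P=\varphi^2|\nabla f|^2$. I checked the identities: $\varphi\nabla Q=-2Q\nabla\varphi$ and $\varphi^2\Delta Q\le 6Q|\nabla\varphi|^2-2Q\varphi\Delta\varphi$ are exactly what the first and second derivative tests give, the Young step with weight $1/n$ leaves the positive fraction $\varphi^2Q^2/n$ of the good term after expanding $(1-Q)^2\ge Q^2-2Q$, and dividing by $Q(x_0)>0$ closes the estimate. (Note $v$ is automatically smooth by elliptic regularity, so $Q$ is $C^2$ and the maximum-principle computation is legitimate.) The trade-off is the usual one: citing \cite{BS2018} gives the uniform statement for all $R>r$ with a clean constant $C(n,\gamma,B)$, whereas your derivation, as written, yields the bound only for $R$ large enough that $\lambda^2(R)$ dominates the fixed constants coming from the inner transition region; since the theorem is only used in the paper with $R\to+\infty$, this is harmless.

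One small correction: your remark that ``$\lambda$ is increasing so that the bound $C_1/\lambda(R)$ dominates the analogous bound at scale $r'$'' is stated backwards. The IRV cut-off at the inner scale $r'$ satisfies $|\nabla\chi_{r'}|\le C_1/\lambda(r')$, which is \emph{larger} than $C_1/\lambda(R)$ when $r'<R$, so the combined annular cut-off does not satisfy $|\nabla\varphi|\le C_1/\lambda(R)$ globally. This does not break the proof: on the inner transition annulus $B_{\gamma r'}\setminus\overline{B_{r'}}$ both $|\nabla\varphi|^2$ and $\varphi^2\lambda^2(r)$ are bounded by constants depending only on the fixed $r'$, and these are absorbed into $C\lambda^2(R)$ for $R$ large; but the justification should be phrased that way rather than via monotonicity of $1/\lambda$.
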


Using these two results we can prove the following.

\begin{theorem}
\label{thm:Lp pp subquadratic}
Let $(M, g)$ be a complete Riemannian manifold satisfying \eqref{eq:ricci lambda} in the sense of quadratic forms. 
Then $M$ is $L^p$-positivity preserving for all $p \in [1, +\infty]$. 
\begin{proof}
Let $u \in L^p(M)$, $p \in [1, +\infty]$ such that $-\Delta u + u \geq 0$ in the sense of distributions. 
Take $\phi \in C^\infty_0(M)$, $\phi \geq 0$ we need to show that
\begin{equation*}
    \int_M u \phi dV_g \geq 0. 
\end{equation*}
To this end we take $v \in C^\infty(M)$, $v>0$ as in \Cref{lem:Lp pp smooth} such that $-\Delta v + v = \phi$ and $v, \Delta v \in L^q(M)$ $\forall q \in [0, +\infty]$. 
Then we proceed as in \Cref{thm:L^p pp Cartan hadamard} using the cut-off functions of \Cref{thm:cutoffs of IRV} instead of the one of \Cref{lem:existence of cutoffs}. 
The proof differs from the one of \Cref{thm:L^p pp Cartan hadamard} only in the estimate of the terms containing $\Delta \chi_R$ and $\nabla \chi_R$. 
The former is immediate: since $|\Delta \chi_R| \leq C_2$ we have
$|uv\Delta \chi_R| \leq C_2 |uv| \in L^1(M)$ hence
\begin{equation*}
    \int_{M} uv\Delta \chi_R dV_g \to 0
\end{equation*}
by dominated convergence as $R \to + \infty$. 
For the latter term, we observe that if $r>0$ is large enough then $\Delta v = v$ on $M \setminus \overline{B_r}$ and $v > 0$ , thus, we have the validity of the Li-Yau estimate of \Cref{thm:Li-Yau of Bianchi and Setti}. 
Since $\nabla \chi_R$ is compactly supported in $B_{\gamma R}\setminus \overline{B_R}$, then 
\begin{equation*}
    |u \langle \nabla \chi_R, \nabla v\rangle| \leq C_2 |u| \frac{|\nabla v|}{\lambda(R)} \leq C C_2 |u| |v| \in L^1(M) \quad \forall x \in M.
\end{equation*}
It follows that 
\begin{equation*}
    \int_{M} u \langle \nabla \chi_R, \nabla v\rangle dV_g \to 0
\end{equation*}
as $R\to + \infty$ which concludes the proof of the theorem. 
\end{proof}
\end{theorem}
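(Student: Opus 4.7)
The plan is to adapt the argument of Theorem~\ref{thm:L^p pp Cartan hadamard} to the current setting, with two substitutions: the Hessian cut-offs of Lemma~\ref{lem:existence of cutoffs} are replaced by the Laplacian cut-offs of Theorem~\ref{thm:cutoffs of IRV}, and the $L^q$-gradient estimate of Lemma~\ref{lem:Lp gradient estimates} is replaced by the pointwise Li-Yau estimate of Theorem~\ref{thm:Li-Yau of Bianchi and Setti}. This swap is what allows the argument to work for the full range $p\in[1,+\infty]$, since we no longer depend on gradient estimates in the conjugate exponent $q$, which are unavailable for $q>2$ and fail for $q=1$.

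First, given $u\in L^p(M)$ with $(-\Delta+1)u\ge 0$ distributionally, and a non-negative test function $\phi\in C^\infty_0(M)$, I would use Lemma~\ref{lem:Lp pp smooth} to produce a positive solution $v\in C^\infty(M)$ of $(-\Delta+1)v=\phi$ with $v,\Delta v\in L^q(M)$ for every $q\in[1,+\infty]$. For any $R$ large enough, $v\chi_R\in C^\infty_0(M)$ is non-negative, hence testing the distributional inequality against it and expanding by Leibniz yields
\begin{equation*}
0 \le \int_M u(-\Delta+1)(v\chi_R)\,dV_g = \int_M u\chi_R\phi\,dV_g - \int_M uv\,\Delta\chi_R\,dV_g - 2\int_M u\langle \nabla v,\nabla\chi_R\rangle\,dV_g.
\end{equation*}
The goal is to pass to the limit $R\to+\infty$ and check that only the first term survives, so that $\int_M u\phi\,dV_g\ge 0$.

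The first term converges to $\int_M u\phi\,dV_g$ by dominated convergence, with dominant $|u\phi|\in L^1(M)$. For the Laplacian cross term, I would exploit the qualitatively new bound $|\Delta\chi_R|\le C_2$ (uniform in $R$) from Theorem~\ref{thm:cutoffs of IRV}: combined with $uv\in L^1(M)$ via H\"older, this gives the uniform dominant $|uv\,\Delta\chi_R|\le C_2|uv|$, while $\Delta\chi_R\to 0$ pointwise since its support $B_{\gamma R}\setminus\overline{B_R}$ escapes to infinity. Hence this term vanishes by dominated convergence. Note that here one no longer needs a Hardy-type inequality as in the Cartan-Hadamard case, precisely because $|\Delta\chi_R|$ is now uniformly controlled.

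The hard part, and the main obstacle, is the gradient cross term, which cannot be handled by raw $L^q$-gradient estimates on $v$ when $p\in[1,2)\cup\{+\infty\}$. The key idea is to use Theorem~\ref{thm:Li-Yau of Bianchi and Setti}: for $R$ sufficiently large the annular support of $\nabla\chi_R$ lies outside $\supp(\phi)$, so $v>0$ solves $\Delta v=v$ there, and the Li-Yau estimate provides the pointwise bound $|\nabla v(x)|\le C\lambda(R)v(x)$ on $B_{\gamma R}\setminus\overline{B_R}$. Combined with $|\nabla\chi_R|\le C_1/\lambda(R)$, the factors $\lambda(R)$ cancel exactly, producing the $R$-independent dominant
\begin{equation*}
|u\langle\nabla v,\nabla\chi_R\rangle| \le C\,C_1\,|uv| \in L^1(M),
\end{equation*}
while $\nabla\chi_R\to 0$ pointwise. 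A final invocation of dominated convergence kills this term, and the proof is complete.
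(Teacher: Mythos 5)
Your proposal is correct and follows essentially the same route as the paper: uniform Laplacian cut-offs from \Cref{thm:cutoffs of IRV} handle the $\Delta\chi_R$ term by dominated convergence, and the Li--Yau estimate of \Cref{thm:Li-Yau of Bianchi and Setti} cancels the $\lambda(R)$ in $|\nabla\chi_R|\le C_1/\lambda(R)$ to give an $R$-independent $L^1$ dominant for the gradient cross term. Your only (harmless) deviations are cosmetic: you recombine $-u\chi_R\Delta v+u\chi_R v$ into $u\chi_R\phi$ up front, and you correctly carry the factor $2$ on the cross term that the paper's displayed expansion drops.
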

\begin{remark}\label{rmk:hsu}
As a consequence of the case $p = +\infty$, we immediately get that the manifold at hand is stochastically complete.
P. Hsu in \cite{Hsu1989} proved the stochastic completeness assuming the $\Ric(x)\ge -\kappa (r(x))$, where $\kappa$ is non decreasing and $\int^\infty \kappa^{-1}=\infty$. 
Keeping also in account that the choice of $\lambda$ in our result can be slightly generalized, \cite[Proposition 1.1]{IRV2020}, our function $\lambda$ is essentially the maximal one admissible in order to fulfill $\int^\infty \lambda^{-1}=\infty$.
\end{remark}
\bibliography{biblio}

\begin{thebibliography}{10}

\bibitem{A1976}
{\sc Aubin, T.}
\newblock Espaces de {S}obolev sur les vari\'{e}t\'{e}s riemanniennes.
\newblock {\em Bull. Sci. Math. (2) 100}, 2 (1976), 149--173.

\bibitem{Ba14}
{\sc Bandara, L.}
\newblock Density problems on vector bundles and manifolds.
\newblock {\em Proc. Amer. Math. Soc. 142}, 8 (2014), 2683--2695.

\bibitem{BGGP2020}
{\sc Berchio, E., Ganguly, D., Grillo, G., and Pinchover, Y.}
\newblock An optimal improvement for the {H}ardy inequality on the hyperbolic
  space and related manifolds.
\newblock {\em Proc. Roy. Soc. Edinburgh Sect. A 150}, 4 (2020), 1699--1736.

\bibitem{BS2018}
{\sc Bianchi, D., and Setti, A.~G.}
\newblock Laplacian cut-offs, porous and fast diffusion on manifolds and other
  applications.
\newblock {\em Calc. Var. Partial Differential Equations 57}, 1 (2018), Paper
  No. 4, 33.

\bibitem{BMR2013}
{\sc Bianchini, B., Mari, L., and Rigoli, M.}
\newblock On some aspects of oscillation theory and geometry.
\newblock {\em Mem. Amer. Math. Soc. 225}, 1056 (2013), vi+195.

\bibitem{BMS02}
{\sc Braverman, M., Milatovich, O., and Shubin, M.}
\newblock Essential selfadjointness of {S}chr\"{o}dinger-type operators on
  manifolds.
\newblock {\em Uspekhi Mat. Nauk 57}, 4(346) (2002), 3--58.

\bibitem{CZ1952}
{\sc Calderon, A.~P., and Zygmund, A.}
\newblock On the existence of certain singular integrals.
\newblock {\em Acta Math. 88\/} (1952), 85--139.

\bibitem{CD2003}
{\sc Coulhon, T., and Duong, X.~T.}
\newblock Riesz transform and related inequalities on noncompact {R}iemannian
  manifolds.
\newblock {\em Comm. Pure Appl. Math. 56}, 12 (2003), 1728--1751.

\bibitem{DD2014}
{\sc D'Ambrosio, L., and Dipierro, S.}
\newblock Hardy inequalities on {R}iemannian manifolds and applications.
\newblock {\em Ann. Inst. H. Poincar\'{e} Anal. Non Lin\'{e}aire 31}, 3 (2014),
  449--475.

\bibitem{dLM1962}
{\sc de~Leeuw, K., and Mirkil, H.}
\newblock Majorations dans {L{$_{\infty }$}} des op\'{e}rateurs
  diff\'{e}rentiels \`a coefficients constants.
\newblock {\em C. R. Acad. Sci. Paris 254\/} (1962), 2286--2288.

\bibitem{DP2016}
{\sc Devyver, B., and Pinchover, Y.}
\newblock Optimal {$L^p$} {H}ardy-type inequalities.
\newblock {\em Ann. Inst. H. Poincar\'{e} Anal. Non Lin\'{e}aire 33}, 1 (2016),
  93--118.

\bibitem{FLLM2021}
{\sc Flynn, J., Lam, N., Lu, G., and Mazumdar, S.}
\newblock Hardy's identities and inequalities on cartan-hadamard manifolds,
  2021.
\newblock ArXiv Preprint Server -- arXiv:2103.12788.

\bibitem{Ga1959}
{\sc Gaffney, M.~P.}
\newblock The conservation property of the heat equation on {R}iemannian
  manifolds.
\newblock {\em Comm. Pure Appl. Math. 12\/} (1959), 1--11.

\bibitem{GT2001}
{\sc Gilbarg, D., and Trudinger, N.~S.}
\newblock {\em Elliptic partial differential equations of second order}.
\newblock Classics in Mathematics. Springer-Verlag, Berlin, 2001.
\newblock Reprint of the 1998 edition.

\bibitem{GW1979}
{\sc Greene, R.~E., and Wu, H.}
\newblock {\em Function theory on manifolds which possess a pole}, vol.~699 of
  {\em Lecture Notes in Mathematics}.
\newblock Springer, Berlin, 1979.

\bibitem{Gr1999}
{\sc Grigor'yan, A.}
\newblock Analytic and geometric background of recurrence and non-explosion of
  the {B}rownian motion on {R}iemannian manifolds.
\newblock {\em Bull. Amer. Math. Soc. (N.S.) 36}, 2 (1999), 135--249.

\bibitem{GGP2017}
{\sc Guidetti, D., G\"{u}neysu, B., and Pallara, D.}
\newblock {$L^1$}-elliptic regularity and {$H=W$} on the whole {$L^p$}-scale on
  arbitrary manifolds.
\newblock {\em Ann. Acad. Sci. Fenn. Math. 42}, 1 (2017), 497--521.

\bibitem{G2016}
{\sc G\"{u}neysu, B.}
\newblock Sequences of {L}aplacian cut-off functions.
\newblock {\em J. Geom. Anal. 26}, 1 (2016), 171--184.

\bibitem{Gu17BMS}
{\sc G\"{u}neysu, B.}
\newblock The {BMS} conjecture.
\newblock {\em Ulmer Seminare 20\/} (2017), 97--101.

\bibitem{Gu17}
{\sc G\"{u}neysu, B.}
\newblock {\em Covariant {S}chr\"{o}dinger semigroups on {R}iemannian
  manifolds}, vol.~264 of {\em Operator Theory: Advances and Applications}.
\newblock Birkh\"{a}user/Springer, Cham, 2017.

\bibitem{GP2015}
{\sc G\"{u}neysu, B., and Pigola, S.}
\newblock The {C}alder\'{o}n-{Z}ygmund inequality and {S}obolev spaces on
  noncompact {R}iemannian manifolds.
\newblock {\em Adv. Math. 281\/} (2015), 353--393.

\bibitem{GP2019}
{\sc G\"{u}neysu, B., and Pigola, S.}
\newblock {$L^p$}-interpolation inequalities and global {S}obolev regularity
  results.
\newblock {\em Ann. Mat. Pura Appl. (4) 198}, 1 (2019), 83--96.
\newblock With an appendix by Ognjen Milatovic.

\bibitem{GP2013}
{\sc G\"{u}neysu, B., and Post, O.}
\newblock Path integrals and the essential self-adjointness of differential
  operators on noncompact manifolds.
\newblock {\em Math. Z. 275}, 1-2 (2013), 331--348.

\bibitem{He99}
{\sc Hebey, E.}
\newblock {\em Nonlinear analysis on manifolds: {S}obolev spaces and
  inequalities}, vol.~5 of {\em Courant Lecture Notes in Mathematics}.
\newblock New York University, Courant Institute of Mathematical Sciences, New
  York; American Mathematical Society, Providence, RI, 1999.

\bibitem{HS1974}
{\sc Hoffman, D., and Spruck, J.}
\newblock Sobolev and isoperimetric inequalities for {R}iemannian submanifolds.
\newblock {\em Comm. Pure Appl. Math. 27\/} (1974), 715--727.

\bibitem{HMRV20}
{\sc Honda, S., Mari, L., Rimoldi, M., and Veronelli, G.}
\newblock Density and non-density of {$C^\infty_c \hookrightarrow W^{k,p}$} on
  complete manifolds with curvature bounds, 2020.
\newblock To appear in Nonlinear Analysis. ArXiv Preprint Server --
  arXiv:2011.14630.

\bibitem{Hsu1989}
{\sc Hsu, P.}
\newblock Heat semigroup on a complete {R}iemannian manifold.
\newblock {\em Ann. Probab. 17}, 3 (1989), 1248--1254.

\bibitem{IRV2019}
{\sc Impera, D., Rimoldi, M., and Veronelli, G.}
\newblock {Density Problems for second order Sobolev Spaces and Cut-off
  Functions on Manifolds With Unbounded Geometry}.
\newblock {\em International Mathematics Research Notices\/} (07 2019).
\newblock rnz131.

\bibitem{IRV2020}
{\sc Impera, D., Rimoldi, M., and Veronelli, G.}
\newblock Higher order distance-like functions and sobolev spaces, 2020.
\newblock ArXiv Preprint Server -- arXiv:1908.10951.

\bibitem{K1972}
{\sc Kato, T.}
\newblock Schr\"{o}dinger operators with singular potentials.
\newblock {\em Israel J. Math. 13\/} (1972), 135--148 (1973).

\bibitem{KO2009}
{\sc Kombe, I., and \"{O}zaydin, M.}
\newblock Improved {H}ardy and {R}ellich inequalities on {R}iemannian
  manifolds.
\newblock {\em Trans. Amer. Math. Soc. 361}, 12 (2009), 6191--6203.

\bibitem{L1972}
{\sc Lebedev, N.~N.}
\newblock {\em Special functions and their applications}.
\newblock Dover Publications, Inc., New York, 1972.
\newblock Revised edition, translated from the Russian and edited by Richard A.
  Silverman, Unabridged and corrected republication.

\bibitem{Li}
{\sc Li, S.}
\newblock Counterexamples to the {$L^p$}-{C}alder\'{o}n-{Z}ygmund estimate on
  open manifolds.
\newblock {\em Ann. Global Anal. Geom. 57}, 1 (2020), 61--70.

\bibitem{MV}
{\sc Marini, L., and Veronelli, G.}
\newblock The {$L^p$} {C}alder\'{o}n-{Z}ygmund inequality on non-compact
  manifolds of positive curvature, 2020.
\newblock To appear in Ann. Global Anal. Geom. ArXiv Preprint Server --
  arXiv:2011.13025.

\bibitem{N2020}
{\sc Nguyen, V.~H.}
\newblock New sharp {H}ardy and {R}ellich type inequalities on
  {C}artan-{H}adamard manifolds and their improvements.
\newblock {\em Proc. Roy. Soc. Edinburgh Sect. A 150}, 6 (2020), 2952--2981.

\bibitem{O1962}
{\sc Ornstein, D.}
\newblock A non-equality for differential operators in the {$L_{1}$} norm.
\newblock {\em Arch. Rational Mech. Anal. 11\/} (1962), 40--49.

\bibitem{dePNZ}
{\sc Philippis, G.~D., and {n}ez Zimbr\'{o}n, J. N.~u.}
\newblock The behavior of harmonic functions at singular points of
  $\mathsf{RCD}$ spaces.
\newblock ArXiv Preprint Server -- arXiv:1909.05220, 2019.

\bibitem{P2020}
{\sc Pigola, S.}
\newblock Global {C}alder\'on-{Z}ygmund inequalities on complete {R}iemannian
  manifolds, 2020.
\newblock Preprint arXiv:2011.03220.

\bibitem{PRS2003}
{\sc Pigola, S., Rigoli, M., and Setti, A.~G.}
\newblock A remark on the maximum principle and stochastic completeness.
\newblock {\em Proc. Amer. Math. Soc. 131}, 4 (2003), 1283--1288.

\bibitem{PRS2008}
{\sc Pigola, S., Rigoli, M., and Setti, A.~G.}
\newblock {\em Vanishing and finiteness results in geometric analysis},
  vol.~266 of {\em Progress in Mathematics}.
\newblock Birkh\"{a}user Verlag, Basel, 2008.
\newblock A generalization of the Bochner technique.

\bibitem{T1999}
{\sc Troyanov, M.}
\newblock Parabolicity of manifolds.
\newblock {\em Siberian Adv. Math. 9}, 4 (1999), 125--150.

\bibitem{V2020}
{\sc Veronelli, G.}
\newblock Sobolev functions without compactly supported approximations, 2020.
\newblock To appear in Anal. PDE. ArXiv Preprint Server -- arXiv:2004.10682.

\bibitem{X1996}
{\sc Xin, Y.}
\newblock {\em Geometry of harmonic maps}, vol.~23 of {\em Progress in
  Nonlinear Differential Equations and their Applications}.
\newblock Birkh\"{a}user Boston, Inc., Boston, MA, 1996.

\bibitem{YSY2014}
{\sc Yang, Q., Su, D., and Kong, Y.}
\newblock Hardy inequalities on {R}iemannian manifolds with negative curvature.
\newblock {\em Commun. Contemp. Math. 16}, 2 (2014), 1350043, 24.

\end{thebibliography}
\bibliographystyle{acm}
\end{document}